\providecommand{\U}[1]{\protect\rule{.1in}{.1in}}
\newtheorem{theorem}{Theorem}
\newtheorem{corollary}[theorem]{Corollary}
\newtheorem{lemma}[theorem]{Lemma}
\newtheorem{proposition}[theorem]{Proposition}
\newtheorem{preremark}[theorem]{Remark}
\newenvironment{remark}    {\begin{preremark}\rm}{\end{preremark}}
\numberwithin{equation}{section}
\numberwithin{example}{section}
\numberwithin{theorem}{section}
\newenvironment{proof}[1][Proof]{\noindent\textbf{#1.} }{\ \rule{0.5em}{0.5em}}
\DeclareMathOperator*{\argmin}{arg\,min}
\begin{document}

\title{A large deviations analysis of certain qualitative properties of parallel
tempering and infinite swapping algorithms}
\author{J. Doll\thanks{Department of Chemistry, Brown University, Providence, RI
02912. Research supported in part by the National Science Foundation
(DMS-1317199), and the Defense Advanced Research Projects Agency
(W911NF-15-2-0122).}, P. Dupuis\thanks{Division of Applied Mathematics, Brown
University, Providence, RI 02912. Research supported in part by the Department
of Energy (DE-SC0010539), the National Science Foundation (DMS-1317199), and
the Defense Advanced Research Projects Agency (W911NF-15-2-0122).} and P.
Nyquist\thanks{Division of Applied Mathematics, Brown University, Providence,
RI 02912. Research supported in part by National Science Foundation
(DMS-1317199).}}
\maketitle

\begin{abstract}
Parallel tempering, or replica exchange, is a popular method for simulating
complex systems. The idea is to run parallel simulations at different
temperatures, and at a given swap rate exchange configurations between the
parallel simulations. From the perspective of large deviations it is optimal
to let the swap rate tend to infinity and it is possible to construct a
corresponding simulation scheme, known as infinite swapping. In this paper we
propose a novel use of large deviations for empirical measures for a more
detailed analysis of the infinite swapping limit in the setting of continuous
time jump Markov processes. Using the large deviations rate function and
associated stochastic control problems we consider a diagnostic based on
temperature assignments, which can be easily computed during a simulation. We
show that the convergence of this diagnostic to its a priori known limit is a
necessary condition for the convergence of infinite swapping. The rate
function is also used to investigate the impact of asymmetries in the
underlying potential landscape, and where in the state space poor sampling is
most likely to occur.
\end{abstract}

\section{Introduction}

In prior work \cite{dupliupladol} we used the large deviation rate function to
study rate of convergence questions for parallel tempering (PT, also known as
replica exchange) type computational methods. The analysis suggested the
construction of a related method obtained in the limit as the attempted swap
rate tends to infinity, which was labeled the infinite swapping (INS)
algorithm, along with more generally implementable partial infinite swapping
(PINS) algorithms. In the present paper we apply the rate function again along
with related constructions from stochastic optimal control to analyze certain
qualitative properties of INS and, to a lesser extent, of PT algorithms.

There are two main results. The first is the theoretical analysis of a
diagnostic for good sampling that was introduced in \cite{dolplafreliudup,
doldup}. The empirical measure of particle/temperature associations (i.e., the
fraction of time that a given particle is assigned to a given dynamic) is a
quantity that is easy to record during a simulation. It was shown in the
references that under mild assumptions, for PT, PINS and INS the empirical
measure converges to the uniform distribution in the large time limit. We show
in this paper that if the empirical measure is not in a small neighborhood of
the uniform distribution, then with overwhelming probability the numerical
approximation provided by INS is not close to the target product measure.
Hence the particle/temperature empirical measure provides a convenient
diagnostic for good sampling. Such a diagnostic can be very useful when
applying Monte Carlo to problems involving rare events, where the
approximation will in general not converge in a gradual and predictable way,
and other more standard diagnostics (e.g., empirical variance) may suggest
convergence when it has not occurred. Of course temperature is only one of
several parameters that can (and have) been used to index the ensemble of
systems used in parallel replica methods, and we expect that the results
developed here for temperature could be generalized to these other parameters
as well \cite{eardee}.

The second use of large deviation theory is to study other qualitative
properties of INS. It is known that the large deviation rate function can tell
one not just the decay rates associated with rare events, but also the most
likely way that the event will occur. This property has found many uses in the
application of large deviations to problems involving small random
perturbations of deterministic systems, where it is used to indicate those
pathways a process is most likely to follow on the way to a rare outcome.
However, our use here is fundamentally different, in that we will use the
large deviation rate function for empirical measures to understand and explain
various properties of sampling schemes. We consider two such applications. The
first is to characterize the impact of symmetries in the energy landscape on
the convergence properties of INS (and also to some degree PT). Indeed, we
find that symmetry properties of the potential play a large role in the
performance of PT and INS schemes, and uncover some counter-intuitive
behavior. For example, we show that reducing energy barriers can actually slow
convergence when the reduction leads to a type of symmetry breaking. The
second application is to identify those parts of the state space where
\textquotedblleft errors\textquotedblright\ in the sampling have the greatest
impact on the overall performance of the algorithms. Such information could,
in principle, be used to design a sampling scheme which focuses computational
effort on accurate approximation of distributional properties in these
regions, thereby leading to better performance. Both of these applications
exploit the fact that we solve variational problems associated with the rate
function by converting them to ergodic (average cost per unit time) stochastic
optimal control problems, which are then solved numerically. To the best of
our knowledge this is the first instance where large deviations results have
been used in this way, allowing us to obtain explicit information on the
convergence properties of simulation schemes.

The organization of the paper is as follows. Section \ref{sec:Model}
introduces the INS algorithm for a general class of jump Markov processes. The
use of this particular type of process is motivated in part by the fact that
variational problems involving the rate function can be solved numerically.
The particle/temperature empirical measure is also introduced. Section
\ref{sec:LD} describes the joint large deviation properties of the INS
numerical approximation and the particle/temperature empirical measure. The
extraction of information from the rate function will require the solution to
certain variational problems, for which a conversion into equivalent ergodic
stochastic control problems is more convenient. These controls problems and
their properties are given in Section \ref{sec:controlProblem}. Although the
material of this section is known (at least in some form) in the literature on
stochastic control, we could not find it in the form we need, nor is it likely
to be familiar to those using PT and related methods. We then apply these
ideas to the analysis of the diagnostic in Section \ref{sec:PT}, and the
analysis of other qualitative properties in Section \ref{sec:asymmetry}.

\section{Infinite swapping for jump Markov processes}

\label{sec:Model} In this section we describe the infinite swapping algorithms
that are appropriate for discrete spaces. Specifically, we consider the
setting of a single temperature continuous-time Markov jump processes on a
finite state space $\mathcal{S}$ of size $|\mathcal{S}|=N\in\mathbb{N}$. For a
description of infinite swapping corresponding to diffusion models or jump
Markov processes on an uncountable state space see \cite{dupliupladol}. The
results we present will carry over to these settings but with a more involved
analysis. Also, as noted previously the discrete formulation makes the
numerical solution of examples much simpler.

The infinite swapping algorithms take values in the product space
$\mathcal{S}^{K}$, where $K\in\mathbb{N}$ is the number of temperatures under
consideration. For $K\in\mathbb{N}$, $x,y,z$ and so on are used to denote
generic elements of $\mathcal{S}$, and the boldface versions such as
$\mathbf{x}=(x_{1},x_{2},\dots,x_{K})$ denote elements of the product space.
For notational brevity we often consider what corresponds to a two temperature
model and typically only comment on the extension to multiple temperatures. It
will become clear that, from a mathematical viewpoint, the inclusion of more
temperatures is much a matter of bookkeeping. However, the use of many
temperatures eventually introduces a practical challenge when implementing
infinite swapping algorithms. This is addressed in \cite{dupliupladol} where
the partial infinite swapping (PINS) algorithm is introduced.

In the setting of Markov jump processes with two temperatures $\tau_{1}$ and
$\tau_{2}$, we describe the dynamics of the process by two rate matrices,
$\Gamma^{1}$ and $\Gamma^{2}$, each of size $N\times N$. One should think of
$\Gamma^{1}$ as the dynamics that go with $\tau_{1}$ and similarly for
$\Gamma^{2}$ and $\tau_{2}$. For two states $x\neq y$, $\Gamma_{x,y}^{i}\geq0$
represents the rate at which the process jumps from $x$ to $y$.

As an example of rate matrices $\Gamma^{1}$ and $\Gamma^{2}$ that correspond
to temperatures $\tau_{1}$ and $\tau_{2}$, we consider a particular form of
so-called Glauber dynamics \cite{str2}. To be precise, let $\mu$ be a Gibbs
measure defined in terms of a potential $V:\mathcal{S}\rightarrow\mathbb{R}$
and temperature $\tau$:
\[
\mu(x)=e^{-V(x)/\tau}/Z(\tau),\ x\in\mathcal{S},
\]
where $Z(\tau)$ is the normalizing constant associated with the temperature
$\tau$.

To define a type of Glauber dynamics with $\mu$ as invariant measure, let $A$
be an $N\times N$ matrix with entries $A_{x,y}\in\{0,1\}$ for all
$x,y\in\mathcal{S}$, with $A_{x,x}=0$. The matrix $A$ defines the
communicating class(es) of $\mathcal{S}$. Next, we define a rate matrix
$\Gamma$ in terms of the potential $V$ and the temperature $\tau$ by%

\[%
\begin{split}
\Gamma_{x,y}  &  \doteq\text{exp}\left\{  -\frac{1}{\tau}\left(
V(y)-V(x)\right)  ^{+}\right\}  A_{x,y},\ y\neq x,\\
\Gamma_{x,x}  &  \doteq-\sum_{y\neq x}\Gamma_{x,y}.
\end{split}
\]
Then $q(x)=-\Gamma_{x,x}$ represents the total rate out of state $x$ for a
chain with dynamics according to $\Gamma$. Note that this is but one
particular form of Glauber dynamics, and is sometimes referred to as
Metropolis dynamics; see \cite{str2} for other forms and details. For any such
Glauber dynamics, $\mu$ is an invariant distribution for the associated
continuous time Markov chain. Under ergodicity, the empirical measure for such
a chain will converge to $\mu$. We assume $\Gamma^{1}$ and $\Gamma^{2}$ are
ergodic henceforth.

\subsection{Infinite swapping limit}

For simplicity we continue to keep the discussion to two temperatures
$\tau_{1}$ and $\tau_{2}$. In accordance with the previous section, let
$\mu_{1}$ and $\mu_{2}$ denote the invariant distributions associated with the
given dynamics and the two rate matrices be denoted $\Gamma^{1}$ and
$\Gamma^{2}$, respectively. Let $\mu$ be the product distribution
\[
\mu=\mu_{1}\times\mu_{2}.
\]
The infinite swapping limit for this setting is described in
\cite{dupliupladol} and we review only the basics. Let $\mathbf{X}%
^{0}=\{(X_{1}^{0}(t),X_{2}^{0}(t)):t\geq0\}$ denote the Markov process with
independent components, each having dynamics according to the rate matrices
$\Gamma^{i}$, $i=1,2$. The embedded Markov chain $\bar{\mathbf{X}}^{0}$ has
probability transition kernel
\[
P\left(  \bar{\mathbf{X}}^{0}(j+1)=(y_{1},y_{2})|\bar{\mathbf{X}}%
^{0}(j)=(x_{1},x_{2})\right)  =%
\begin{cases}
\frac{\Gamma_{x_{1},y_{1}}^{1}}{q_{1}(x_{1})+q_{2}(x_{2})}, & y_{1}\neq
x_{1},\ y_{2}=x_{2},\\
\frac{\Gamma_{x_{2},y_{2}}^{2}}{q_{1}(x_{1})+q_{2}(x_{2})}, & y_{1}%
=x_{1},\ y_{2}\neq x_{2},\\
0, & \text{otherwise}.
\end{cases}
\]
The dynamics thus describe a process for which, when the current state is
$(x_{1},x_{2})$, the support of the jump distribution is $\{(y_{1}%
,x_{2}):y_{1}\in\mathcal{S}\}\cup\{(x_{1},y_{2}):y_{2}\in\mathcal{S}\}$. The
jump times of $\mathbf{X}^{0}$ are exponential random variables with jump
rates given by $q(x_{1},x_{2})=q_{1}(x_{1})+q_{2}(x_{2})$. This is
conveniently summarized by the infinitesimal generator $\mathcal{L}^{0}$
associated with the process $\mathbf{X}^{0}$:
\begin{align*}
\mathcal{L}^{0}f(x_{1},x_{2})  &  =\frac{1}{q(x_{1},x_{2})}\sum_{y_{1}\neq
x_{1}}\left[  f(y_{1},x_{2})-f(x_{1},x_{2})\right]  \Gamma_{x_{1},y_{1}}^{1}\\
&  \quad+\frac{1}{q(x_{1},x_{2})}\sum_{y_{2}\neq x_{2}}\left[  f(x_{1}%
,y_{2})-f(x_{1},x_{2})\right]  \Gamma_{x_{2},y_{2}}^{2}.
\end{align*}
Let $\eta_{T}^{0}$ denote the empirical measure of $\mathbf{X}^{0}$,
\[
\eta_{T}^{0}(\cdot)=\frac{1}{T}\int_{0}^{T}\delta_{\mathbf{X}^{0}(t)}%
(\cdot)dt.
\]
By ergodicity, with probability one $\eta_{T}^{0}$ converges weakly to $\mu$
in $\mathcal{P}(\mathcal{S}^{2})$, the space of probability measures on
$\mathcal{S}^{2}$, as $T\rightarrow\infty$ and the empirical measure $\eta
_{T}^{0}$ is used to approximate ergodic averages associated with $\mu$. In
particular, this provides a way to obtain estimates of integrals associated
with $\mu_{1}$, the invariant measure associated with the lower temperature
and often the distribution of interest in practice. However, the problem of
rare-event sampling may cause the rate of convergence for $\eta_{T}^{0}$ to
$\mu$ to be slow, resulting in inaccurate estimates, especially with respect
to the low temperature marginal.

Let $\mathbf{X}^{a}$ be the process that corresponds to swaps according to a
Metropolis-type rule and with rate $a\geq0$. More precisely, let $b$ be the
function
\begin{equation}
b(x_{1},x_{2})=1\wedge\frac{\mu(x_{2},x_{1})}{\mu(x_{1},x_{2})} \label{eq:g}%
\end{equation}
and let the infinitesimal generator $\mathcal{L}^{a}$ associated with
$\mathbf{X}^{a}$ be given by
\[
\mathcal{L}^{a}f(x_{1},x_{2})=\mathcal{L}^{0}f(x_{1},x_{2})+ab(x_{1}%
,x_{2})\left[  f(x_{2},x_{1})-f(x_{1},x_{2})\right]  .
\]
$\mathbf{X}^{a}$ is a continuous time version of the well-known parallel
tempering algorithm \cite{eardee,gey,sugoka,swewan}.

The process $\mathbf{X}^{a}$ has two different kinds of jumps. Suppose the
process is in $(x_{1},x_{2})$ and let $s_{1}$ and $s_{2}$ be two exponential
random variables with parameter $q(x_{1},x_{2})$ and $a$, respectively. Then
the process has a jump after $s=s_{1}\wedge s_{2}$ units of time. Note that
the time $s$ to the next jump is an exponential random variable with parameter
$q(x_{1},x_{2})+a$. If $s_{1}<s_{2}$ the jump is according to the dynamics
specified by $\Gamma^{1}$ and $\Gamma^{2}$ (the $\mathcal{L}^{0}$ part of the
generator). If instead $s_{2}<s_{1}$, with probability $b(x_{1},x_{2})$ the
process jumps to $(x_{2},x_{1})$ - the two particles switch locations - and
with remaining probability it stays in $(x_{1},x_{2})$ (a failed swap
attempt). In terms of the embedded Markov chain $\bar{\mathbf{X}}^{a}$ this
corresponds to a probability transition kernel
\begin{align*}
&  P\left(  \bar{\mathbf{X}}^{a}(j+1)=(y_{1},y_{2})|\bar{\mathbf{X}}%
^{a}(j)=(x_{1},x_{2})\right) \\
&  =\frac{q(x_{1},x_{2})}{a+q(x_{1},x_{2})}P\left(  \bar{\mathbf{X}}%
^{0}(j+1)=(y_{1},y_{2})|\bar{\mathbf{X}}^{0}(j)=(x_{1},x_{2})\right) \\
&  \quad+\frac{a}{a+q(x_{1},x_{2})}\left[  b(x_{1},x_{2})\delta_{(x_{2}%
,x_{1})}(y_{1},y_{2})+(1-b(x_{1},x_{2}))\delta_{(x_{1},x_{2})}(y_{1}%
,y_{2})\right]  .
\end{align*}
As previously mentioned the jump times of $\mathbf{X}^{a}$ are exponential
random variables with jump rates $q(x_{1},x_{2})+a$ (when in state
$(x_{1},x_{2})$). Although easy to check using detailed balance, it is
important to note that introducing the second type of jump - swaps of the
particle locations - does not change the invariant distribution; just as for
$\mathbf{X}^{0}$, $\mu$ is the invariant distribution of $\mathbf{X}^{a}$.

The infinite swapping process, first studied in \cite{dupliupladol}, is the
limit process that arises as $a\rightarrow\infty$. However, the processes
$\{\mathbf{X}^{a}\}$ cannot be tight as $a$ grows due to the discontinuities
introduced by the swapping of particle locations and there is no well-defined
limit process. Instead, a process associated with the limit $a\rightarrow
\infty$ can be obtained by considering a temperature swapped version of
$\mathbf{X}^{a}$, denoted $\mathbf{Y}^{a}$. We give only a brief description
of $\mathbf{Y}^{a}$ and the reader is referred to \cite{dupliupladol} for
details and a more thorough discussion.

Rather than attempting to swap particle locations, one can let dynamics of the
different components of the Markov process $\mathbf{X}^{0}$ change. To
describe this, suppose we append a process $Z^{a}$ with values in $\{1,2\}$,
and consider the Markov process $(\mathbf{Y}^{a},Z^{a})$ with jump rates given
$(\mathbf{Y}^{a},Z^{a})=(x_{1},x_{2},z)$ equal to
\[%
\begin{array}
[c]{cc}%
\Gamma_{x_{1},y_{1}}^{1} & \text{for }(x_{1},x_{2},1)\rightarrow(y_{1}%
,x_{2},1)\\
\Gamma_{x_{2},y_{2}}^{2} & \text{for }(x_{1},x_{2},1)\rightarrow(x_{1}%
,y_{2},1)\\
\Gamma_{x_{1},y_{1}}^{2} & \text{for }(x_{1},x_{2},2)\rightarrow(y_{1}%
,x_{2},2)\\
\Gamma_{x_{2},y_{2}}^{1} & \text{for }(x_{1},x_{2},2)\rightarrow(x_{1}%
,y_{2},2)\\
ab(x_{1},x_{2}) & \text{for }(x_{1},x_{2},1)\rightarrow(x_{1},x_{2},2)\\
ab(x_{2},x_{1}) & \text{for }(x_{1},x_{2},2)\rightarrow(x_{1},x_{2},1)
\end{array}
.
\]
With this process the particles do not change location when a swap is
successful. Instead, the dynamics are swapped as indicated by the value of
$Z^{a}$. To account for this change the empirical measure must be redefined,
and in fact one uses
\begin{equation}
\frac{1}{T}\int_{0}^{T}\left[  1_{\left\{  Z^{a}(t)=1\right\}  }%
\delta_{(\mathbf{Y}_{1}^{a}(t),\mathbf{Y}_{2}^{a}(t))}(\cdot)+1_{\left\{
Z^{a}(t)=2\right\}  }\delta_{(\mathbf{Y}_{2}^{a}(t),\mathbf{Y}_{1}^{a}%
(t))}(\cdot)\right]  dt \label{eqn:Ya-emp-meas}%
\end{equation}
in lieu of
\[
\frac{1}{T}\int_{0}^{T}\delta_{\mathbf{X}^{a}(t)}(\cdot)dt.
\]

Because of the change in bookkeeping, the process $\mathbf{Y}^{a}$ as well as
the replacement for the empirical measure have well defined limits in
distribution as $a\rightarrow\infty$ \cite{dupliupladol}. Let $\mathbf{Y}%
^{\infty}=\{(Y_{1}^{\infty}(t),Y_{2}^{\infty}(t)):t\geq0\}$ denote the limit
process, referred to as the infinite swapping limit or the infinite swapping
process. It follows from the dynamics of $\mathbf{Y}^{a}$ that $\mathbf{Y}%
^{\infty}$ is a pure jump Markov process with infinitesimal generator
\begin{equation}
\mathcal{L}^{\infty}f(x_{1},x_{2})=\sum_{(y_{1},y_{2})\in\mathcal{S}^{2}%
}\left[  f(y_{1},y_{2})-f(x_{1},x_{2})\right]  \Gamma_{(x_{1},x_{2}%
),(y_{1},y_{2})}^{\infty} \label{eq:generatorINS}%
\end{equation}
where the rate matrix $\Gamma^{\infty}$ is defined as
\begin{equation}
\Gamma_{(x_{1},x_{2}),(y_{1},y_{2})}^{\infty}=%
\begin{cases}
\rho(x_{1},x_{2})\Gamma_{x_{1},y_{1}}^{1}+\rho(x_{2},x_{1})\Gamma_{x_{1}%
,y_{1}}^{2}, & y_{1}\neq x_{1},y_{2}=x_{2},\\
\rho(x_{1},x_{2})\Gamma_{x_{2},y_{2}}^{2}+\rho(x_{2},x_{1})\Gamma_{x_{2}%
,y_{2}}^{1}, & y_{1}=x_{1},y_{2}\neq x_{2},\\
0, & \text{otherwise},
\end{cases}
\label{eq:defGamma}%
\end{equation}
where
\begin{equation}
\rho(x_{1},x_{2})=\frac{\mu(x_{1},x_{2})}{\mu(x_{1},x_{2})+\mu(x_{2},x_{1})}
\label{eqn:rho}%
\end{equation}
is the relative weight $\mu$ assigns to the permutation $(x_{1},x_{2})$. The
limit of (\ref{eqn:Ya-emp-meas}) in distribution is
\begin{equation}
\frac{1}{T}\int_{0}^{T}\left[  \rho(\mathbf{Y}_{1}^{\mathbb{\infty}%
}(t),\mathbf{Y}_{2}^{\mathbb{\infty}}(t))\delta_{(\mathbf{Y}_{1}%
^{\mathbb{\infty}}(t),\mathbf{Y}_{2}^{\mathbb{\infty}}(t))}(\cdot
)+\rho(\mathbf{Y}_{2}^{\mathbb{\infty}}(t),\mathbf{Y}_{1}^{\mathbb{\infty}%
}(t))\delta_{(\mathbf{Y}_{2}^{\mathbb{\infty}}(t),\mathbf{Y}_{1}%
^{\mathbb{\infty}}(t))}(\cdot)\right]  dt. \label{eqn:Yinf-emp-meas}%
\end{equation}
In the general case, $K\geq2$, $\rho$ is defined similarly, with the
denominator in (\ref{eqn:rho}) now being the sum over of $\mu(\mathbf{x}%
^{\sigma})$ for all permutations $\sigma\in\Sigma_{K}$, and the replacement
for the empirical measure using a sum over all such permutations. It is shown
in \cite{dupliupladol} that the infinite swapping approximation
(\ref{eqn:Ya-emp-meas}) converges to $\mu$ faster than the corresponding
parallel tempering scheme (i.e., the empirical measure of $\mathbf{X}^{a}$)
for any $a\in\lbrack0,\infty)$, when the corresponding large deviations rate
functions are used to measure the rate of convergence.

Similar to the pre-limit dynamics, $\Gamma_{(x_{1},x_{2}),(x_{1},x_{2}%
)}^{\infty}=-\sum_{(y_{1},y_{2})\in\mathcal{S}^{2}}\Gamma_{(x_{1}%
,x_{2}),(y_{1},y_{2})}^{\infty}$ and we define $q^{\infty}(x_{1}%
,x_{2})=-\Gamma_{(x_{1},x_{2}),(x_{1},x_{2})}^{\infty}$ . The interpretation
is that the total rate out of state $(x_{1},x_{2})$ for the infinite swapping
process is a mixture, according to the weights $\rho(x_{1},x_{2})$ and
$\rho(x_{2},x_{1})$, of the rates out of $(x_{1},x_{2})$ and $(x_{2},x_{1})$
for the original (uncoupled) process. The following symmetry properties will
come in handy later on. Although not obvious, they follow immediately from the
definitions of $\Gamma^{\infty}$ and $q^{\infty}$ and the proof is merely a
matter of bookkeeping.

\begin{lemma}
\label{lemma:symmetry} For any $\mathbf{x}, \mathbf{y} \in\mathcal{S}^{K}$ and
permutation $\sigma\in\Sigma_{K}$, $q^{\infty} (\mathbf{x}) = q^{\infty
}(\mathbf{x}^{\sigma}) $ and $\Gamma^{\infty} _{\mathbf{x}, \mathbf{y}} =
\Gamma^{\infty} _{\mathbf{x}^{\sigma}, \mathbf{y}^{\sigma}}$.
\end{lemma}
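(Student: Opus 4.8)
The plan is to verify both identities by direct computation from the definitions (\ref{eq:defGamma}) and (\ref{eqn:rho}), carrying out the two-temperature case in full and then noting that for $K\geq 2$ it is the same computation with heavier indexing. It is convenient to treat $\Gamma^{\infty}$ first and to deduce the claim for $q^{\infty}$ from it.

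Two preliminary observations are needed. First, write $q_{i}(x)=-\Gamma^{i}_{x,x}=\sum_{y\neq x}\Gamma^{i}_{x,y}$ for the single-temperature total rates. Second, the denominator appearing in the definition of $\rho$ is permutation invariant: for $K=2$ the quantity $\mu(x_{1},x_{2})+\mu(x_{2},x_{1})$ is symmetric in its arguments, so interchanging $x_{1}$ and $x_{2}$ simply swaps $\rho(x_{1},x_{2})$ and $\rho(x_{2},x_{1})$; for general $K$ the denominator $\sum_{\sigma'\in\Sigma_{K}}\mu(\mathbf{x}^{\sigma'})$ is unchanged under $\mathbf{x}\mapsto\mathbf{x}^{\sigma}$ because $\sigma'\mapsto\sigma\sigma'$ is a bijection of $\Sigma_{K}$, so $\rho(\mathbf{x}^{\sigma})=\mu(\mathbf{x}^{\sigma})/D(\mathbf{x})$ with $D$ permutation invariant.

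For $\Gamma^{\infty}$ I would check, in the two-temperature case, that $\Gamma^{\infty}_{(x_{1},x_{2}),(y_{1},y_{2})}=\Gamma^{\infty}_{(x_{2},x_{1}),(y_{2},y_{1})}$ by cases on which coordinate moves. If $y_{2}=x_{2}$ and $y_{1}\neq x_{1}$, the first line of (\ref{eq:defGamma}) gives the left side as $\rho(x_{1},x_{2})\Gamma^{1}_{x_{1},y_{1}}+\rho(x_{2},x_{1})\Gamma^{2}_{x_{1},y_{1}}$, while on the right side the first coordinate is frozen and the second moves from $x_{1}$ to $y_{1}$, so the second line of (\ref{eq:defGamma}), read with the ordered pair $(x_{2},x_{1})$ in place of $(x_{1},x_{2})$, returns $\rho(x_{2},x_{1})\Gamma^{2}_{x_{1},y_{1}}+\rho(x_{1},x_{2})\Gamma^{1}_{x_{1},y_{1}}$ --- the same number. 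The case $y_{1}=x_{1}$, $y_{2}\neq x_{2}$ is identical after relabeling, and in all remaining cases both sides vanish. For $K\geq 2$ one repeats this after writing $\Gamma^{\infty}_{\mathbf{x},\mathbf{y}}$ as the $\rho(\mathbf{x}^{\sigma})$-weighted sum over $\sigma\in\Sigma_{K}$ of the rate matrices of the uncoupled process with its component dynamics permuted according to $\sigma$; conjugating the state $\mathbf{x}$ and target $\mathbf{y}$ by a fixed $\tau\in\Sigma_{K}$ reindexes this sum and carries each summand to another one, leaving the total unchanged --- the ``bookkeeping'' referred to in the statement. Finally, the identity for $q^{\infty}$ follows: substituting $\mathbf{y}=\mathbf{z}^{\sigma}$ (a bijection of $\mathcal{S}^{K}\setminus\{\mathbf{x}^{\sigma}\}$ onto $\mathcal{S}^{K}\setminus\{\mathbf{x}\}$) and using what was just proved,
\[
q^{\infty}(\mathbf{x}^{\sigma})=\sum_{\mathbf{y}\neq\mathbf{x}^{\sigma}}\Gamma^{\infty}_{\mathbf{x}^{\sigma},\mathbf{y}}=\sum_{\mathbf{z}\neq\mathbf{x}}\Gamma^{\infty}_{\mathbf{x}^{\sigma},\mathbf{z}^{\sigma}}=\sum_{\mathbf{z}\neq\mathbf{x}}\Gamma^{\infty}_{\mathbf{x},\mathbf{z}}=q^{\infty}(\mathbf{x}).
\]
(Equivalently, summing (\ref{eq:defGamma}) directly gives $q^{\infty}(x_{1},x_{2})=\rho(x_{1},x_{2})(q_{1}(x_{1})+q_{2}(x_{2}))+\rho(x_{2},x_{1})(q_{2}(x_{1})+q_{1}(x_{2}))$, which is visibly symmetric under $x_{1}\leftrightarrow x_{2}$.)

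The only real obstacle is bookkeeping: one must pin down the precise convention from \cite{dupliupladol} by which a permutation $\sigma$ relabels the dynamics $\Gamma^{1},\dots,\Gamma^{K}$ attached to the coordinates, so that the reindexing of the sum over $\Sigma_{K}$ lines up with the permutation of the superscripts; a misplaced inverse or composition is essentially the only way the argument can fail, and the explicit $K=2$ formulas above serve as a reliable sanity check.
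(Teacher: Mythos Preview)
Your proposal is correct and follows essentially the same route as the paper: a direct case check of $\Gamma^{\infty}_{(x_{1},x_{2}),(y_{1},y_{2})}=\Gamma^{\infty}_{(x_{2},x_{1}),(y_{2},y_{1})}$ for $K=2$ from the definition \eqref{eq:defGamma}, with the general $K$ deferred to bookkeeping, and then the symmetry of $q^{\infty}$ read off by summing. The paper carries out exactly this computation (with a relabeling $(u_{1},u_{2})=(x_{2},x_{1})$, $(v_{1},v_{2})=(y_{2},y_{1})$ in place of your verbal description), so there is nothing materially different to compare.
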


\begin{proof}
For notational simplicity we limit the proof to the case $K=2$. Start by
considering the claim for $\Gamma^{\infty}$. If $(x_{1},x_{2})$ and
$(y_{1},y_{2})$ are chosen such that $\Gamma_{(x_{1},x_{2}),(y_{1},y_{2}%
)}^{\infty}=0$ then $\Gamma_{(x_{2},x_{1}),(y_{2},y_{1})}^{\infty}=0$ as well
and the claim holds. Without loss of generality we can consider $y_{1}\neq
x_{1}$ and $y_{2}=x_{2}$, so that
\[
\Gamma_{(x_{1},x_{2}),(y_{1},y_{2})}^{\infty}=\rho(x_{1},x_{2})\Gamma
_{x_{1},y_{1}}^{1}+\rho(x_{2},x_{1})\Gamma_{x_{1},y_{1}}^{2}.
\]
Labeling the states $(u_{1},u_{2})=(x_{2},x_{1})$ and $(v_{1},v_{2}%
)=(y_{2},y_{1})$, we have $v_{1}=u_{1}$ and $v_{2}\neq u_{2}$. By
\eqref{eq:defGamma},
\begin{align*}
\Gamma_{(x_{2},x_{1}),(y_{2},y_{1})}^{\infty}  &  =\Gamma_{(u_{1}%
,u_{2}),(v_{1},v_{2})}^{\infty}\\
&  =\rho(u_{1},u_{2})\Gamma_{u_{2},v_{2}}^{2}+\rho(u_{2},u_{1})\Gamma
_{u_{2},v_{2}}^{1}\\
&  =\rho(x_{2},x_{1})\Gamma_{x_{1},y_{1}}^{2}+\rho(x_{1},x_{2})\Gamma
_{x_{1},y_{1}}^{1}\\
&  =\Gamma_{(x_{1},x_{2}),(y_{1},y_{2})}^{\infty}.
\end{align*}
The case where $y_{1}=x_{1},y_{2}\neq x_{2}$ is completely analogous. This
confirms the claim for $\Gamma^{\infty}$ and the symmetry property for
$q^{\infty}$ then follows directly from the definition:
\begin{align*}
q^{\infty}(x_{1},x_{2})  &  =\sum_{(y_{1},y_{2})\in\mathcal{S}^{2}}%
\Gamma_{(x_{1},x_{2}),(y_{1},y_{2})}^{\infty}\\
&  =\sum_{(y_{1},y_{2})\in\mathcal{S}^{2}}\Gamma_{(x_{2},x_{1}),(y_{2},y_{1}%
)}^{\infty}\\
&  =q^{\infty}(x_{2},x_{1}).
\end{align*}
The extension to $K>2$ is straightforward.
\end{proof}

\medskip As is discussed at the beginning of Section \ref{sec:LD}, standard
arguments using detailed balance show that the stationary distribution for
$\mathbf{Y}^{\infty}$ is $\bar{\mu}$, a symmetrized version of $\mu$:
\[
\bar{\mu}(y_{1},y_{2})=\frac{1}{2}[\mu(y_{1},y_{2})+\mu(y_{2},y_{1})].
\]
It follows that the weighted empirical measure
\begin{equation}
\eta_{T}^{\infty}=\frac{1}{T}\int_{0}^{T}\left[  \rho(Y_{1}^{\infty}%
(t),Y_{2}^{\infty}(t))\delta_{(Y_{1}^{\infty}(t),Y_{2}^{\infty}(t))}%
+\rho(Y_{2}^{\infty}(t),Y_{1}^{\infty}(t))\delta_{(Y_{2}^{\infty}%
(t),Y_{1}^{\infty}(t))}\right]  dt \label{eqn:weightedEM}%
\end{equation}
converges to $\mu$ as $T\rightarrow\infty$: using (\ref{eqn:rho}), for any
test function $f$%
\begin{align}
&  \sum_{(y_{1},y_{2})\in S^{2}}\left[  f(y_{1},y_{2})\rho(y_{1}%
,y_{2})+f(y_{2},y_{1})\rho(y_{2},y_{1})\right]  \bar{\mu}(y_{1},y_{2}%
)\label{eqn:testf}\\
&  \quad=2\sum_{(y_{1},y_{2})\in S^{2}}f(y_{1},y_{2})\rho(y_{1},y_{2})\frac
{1}{2}\left[  \mu(y_{1},y_{2})+\mu(y_{2},y_{1})\right] \nonumber\\
&  \quad=\sum_{(y_{1},y_{2})\in S^{2}}f(y_{1},y_{2})\mu(y_{1},y_{2}).\nonumber
\end{align}
Thus, $\eta_{T}^{\infty}$ can be used as an approximation of $\mu$ and for
computing ergodic averages of thermodynamic properties of the original process.

In the case of $K$ temperatures, there are $K$ rate matrices $\Gamma^{i}$. Let
$\mu$ denote the product measure of the associated $\mu_{i}$'s. The analogue
of the claim just made still holds and $\eta_{T}^{\infty}$ takes the form
\[
\eta_{T}^{\infty}=\frac{1}{T}\int_{0}^{T}\sum_{\sigma\in\Sigma_{K}}%
\rho((\mathbf{Y}^{\infty}(t))^{\sigma})\delta_{(\mathbf{Y}^{\infty
}(t))^{\sigma}}dt,
\]
where $\Sigma_{K}$ is the set of permutations of $\{1,2,\dots,K\}$ and
$\mathbf{Y}^{\infty}$ is now a Markov process that has generator
\eqref{eq:generatorINS} and with $\Gamma^{\infty}$ defined accordingly.

For notational simplicity, in general we do not distinguish between two or
more temperatures. Whenever a proof is provided only for $K = 2$, unless
otherwise stated, the reader should convince themselves that extending the
result to an arbitrary number of temperatures is a straightforward task.

\subsection{Particle-temperature associations}

As mentioned in the Introduction, when running parallel tempering or infinite
swapping algorithms, in addition to the empirical measure used for computing
ergodic averages one can record the empirical measure on the
particle-temperature associations, i.e., the fraction of time that a given
particle in the $\mathbf{Y}^{a}$ or $\mathbf{Y}^{\infty}$ formulation is
assigned the dynamic $\Gamma^{i}$.

To discuss particle-temperature associations it is useful to consider first
the pre-limit process $\mathbf{Y}^{a}$. Recall from the previous subsection
that $\mathbf{Y}^{a}$ is the process for which the dynamics, i.e., the rate
matrices, associated with each particle (component of $\mathbf{Y}^{a}$) are
changed according to swaps attempted with intensity $a$, and with the
probability of success for each attempt given by $b$ in \eqref{eq:g}. We think
of each permutation as a mapping from $\{1,2,\dots,K\}$ onto itself. One can
imagine starting with $\sigma$ equal to the identity permutation, and updating
$\sigma$ each time a swap of the dynamics occurs. Thus at each moment of time
there is a particular permutation, $\sigma$, that provides the current
assignment of rate matrices to the components $Y_{1}^{a},\dots,Y_{K}^{a}$:
$\Gamma^{\sigma^{-1}(i)}$ is the rate matrix associated with particle
$Y_{i}^{a}$.

For the pre-limit processes, the particle-temperature associations are defined
as the fraction of time that a certain permutation $\sigma\in\Sigma_{K}$ is
used to associate rate matrices to the components of the process. This is then
used to create an empirical measure, $\rho_{T}=(\rho_{T}^{\sigma_{1}}%
,\dots,\rho_{T}^{\sigma_{K!}})$ (for some ordering of the elements of
$\Sigma_{K}$) on the set $\Sigma_{K}$.

Although the possibility of recording what rate matrix is associated with what
component is blurred in the infinite swapping limit, the definition of the
particle-temperature associations still make sense: For each permutation
$\sigma\in\Sigma_{K}$, the corresponding component $\rho_{T}^{\sigma}$ of
$\rho_{T}$ is defined by
\[
\rho_{T}^{\sigma}=\frac{1}{T}\int_{0}^{T}\rho((\mathbf{Y}^{\infty}%
(t))^{\sigma})dt,\ \rho_{T}=\{\rho_{T}^{\sigma}\}_{\sigma\in\Sigma_{K}}.
\]
Thus $\rho_{T}$ is probability measure that puts mass $\rho_{T}^{\sigma}$ on
the permutation $\sigma$. In the case of two temperatures we typically denote
the components by $\rho_{T}=(\rho_{T}^{1},\rho_{T}^{2})$, where the
superscript $1$ refers to the identity map $\sigma=(1,2)$ and $2$ to the
permutation that reverses components, $\sigma=(2,1)$. Note that $\rho_{T}$ is
a vector with $|\Sigma_{K}| = K!$ entries,
each providing the fraction of time the corresponding permutation has been in
use up to time $T$. Since there must always be some permutation that is in use
this vector is a probability measure on $\Sigma_{K}$.

To further understand the role of $\rho_{T}$ consider again $\mathbf{Y}^{a}$,
the process associated with parallel tempering with swap rate $a$ (and with
swapped dynamics). It is not hard to show that, if the state of $\mathbf{Y}%
^{a}$ is frozen at some $\mathbf{y}$, then the swap mechanism introduces an
ergodic Markov chain on $\Sigma_{K}$, with $\rho(\mathbf{y}^{\sigma})$ the
stationary probability to be in permutation $\sigma$. Indeed, in the case
$K=2$, if we label the permutations $\sigma_{1}$ and $\sigma_{2}$, a
successful swap from $\sigma_{1}$ to $\sigma_{2}$ has probability $1\wedge
(\mu(y_{2},y_{1})/\mu(y_{1},y_{2}))$ and a successful swap in the reverse
direction has probability $1\wedge(\mu(y_{1},y_{2})/\mu(y_{2},y_{1}))$.
Without loss of generality we can assume that the former probability is
$\mu(y_{2},y_{1})/\mu(y_{1},y_{2})$, so that the probability of a successful
swap from $\sigma_{2}$ to $\sigma_{1}$ is 1. It is clear that these
transitions form a Markov chain on $\Sigma_{2}=\{\sigma_{1},\sigma_{2}\}$ and
under the assumption on the swap probabilities the transition matrix is
\[
\left(
\begin{array}
[c]{cc}%
1-\frac{\mu(y_{2},y_{1})}{\mu(y_{1},y_{2})} & \frac{\mu(y_{2},y_{1})}%
{\mu(y_{1},y_{2})}\\
1 & 0
\end{array}
\right)  .
\]
It is easy to check that the associated invariant measure is that which puts
probability $\rho(y_{1},y_{2})$ on permutation $\sigma_{1}$ and $\rho
(y_{2},y_{1})$ on $\sigma_{2}$. The generalization to arbitrary $K\geq2$ is
straightforward, albeit notationally cumbersome.

Thus for a fixed $\mathbf{y}$, $\rho(\mathbf{y}^{\sigma})$ can be interpreted
as the asymptotic fraction of time that temperatures are assigned according to
$\sigma$ when $\mathbf{Y}^{\infty}(t)=\mathbf{y}$. Under the invariant
distribution $\bar{\mu}$ of $\mathbf{Y}^{\infty}$ the expectation of
$\rho((\mathbf{Y}^{\infty}(t))^{\sigma})$ is
\begin{align*}
E\left[  \rho((\mathbf{Y}^{\infty}(t))^{\sigma})\right]   &  =\sum
_{\mathbf{y}\in\mathcal{S}^{K}}\rho(y_{\sigma^{-1}(1)},\dots,y_{\sigma
^{-1}(K)})\bar{\mu}(y_{1},\dots,y_{K})\\
&  =\sum_{\mathbf{y}\in\mathcal{S}^{K}}\left(  \frac{\mu(y_{\sigma^{-1}%
(1)},\dots,y_{\sigma^{-1}(K)})}{\sum_{\bar{\sigma}\in\Sigma_{K}}\mu
(y_{\bar{\sigma}^{-1}(1)},\dots,y_{\bar{\sigma}^{-1}(K)})}\right) \\
&  \qquad\quad\times\frac{1}{K!}\frac{\sum_{\bar{\sigma}\in\Sigma_{K}}%
\mu(y_{\bar{\sigma}^{-1}(1)},\dots,y_{\bar{\sigma}^{-1}(K)})}{\prod_{k=1}%
^{K}Z_{k}}\\
&  =\frac{1}{K!}\sum_{\mathbf{y}\in\mathcal{S}^{K}}\frac{\mu(y_{\sigma
^{-1}(1)},\dots,y_{\sigma^{-1}(K)})}{\prod_{k=1}^{K}Z_{k}}\\
&  =\frac{1}{K!}.
\end{align*}
That is, under the invariant distribution $\bar{\mu}$ all permutations are
assigned the same probability. It follows from the ergodic theorem that
$\rho_{T}$ converges to the uniform distribution on $\Sigma_{K}$.

\section{Large deviation properties}

\label{sec:LD}To use $\rho_{T}$ (or related quantities) to understand
$\eta_{T}^{\infty}$, we need to study the joint distribution of $\eta
_{T}^{\infty}$ and $\rho_{T}$ as $T$ grows to infinity. More precisely, we
will study the asymptotic properties of this joint distribution by means of
large deviations. In \cite{dupliupladol} the large deviation properties
associated with infinite swapping are explored by considering the measure
$\eta_{T}^{\infty}$ in the limit as $T$ goes to infinity. Here, the starting
point is instead the ordinary empirical measure associated with the infinite
swapping process $\mathbf{Y}^{\infty}$:
\[
\nu_{T}(\cdot)\doteq\frac{1}{T}\int_{0}^{T}\delta_{\mathbf{Y}^{\infty}%
(t)}(\cdot)dt.
\]

As noted in the previous section, $\mathbf{Y}^{\infty}$ is a continuous time
pure jump process with generator given in \eqref{eq:generatorINS}. It is
assumed that the rate matrices $\Gamma^{1}$, $\Gamma^{2}$ are reversible with
respect to $\mu_{1}$ and $\mu_{2}$, respectively. Under this assumption it is
straightforward to show that $\Gamma^{\infty}$ is reversible with respect to
$\bar{\mu}$. Without loss of generality, for some $\mathbf{x}=(x_{1},x_{2}%
)\in\mathcal{S}^{2}$ take $\mathbf{y}=(y_{1},x_{2})$, so that
\[
\Gamma_{\mathbf{x},\mathbf{y}}^{\infty}=\rho(x_{1},x_{2})\Gamma_{x_{1},y_{1}%
}^{1}+\rho(x_{2},x_{1})\Gamma_{x_{1},y_{1}}^{2}.
\]
It then follows that
\begin{align*}
\bar{\mu}(\mathbf{x})\Gamma_{\mathbf{x},\mathbf{y}}^{\infty}  &  =\frac{1}%
{2}\left[  \mu((x_{1},x_{2}))+\mu((x_{2},x_{1}))\right]  \left(  \rho
(x_{1},x_{2})\Gamma_{x_{1},y_{1}}^{1}+\rho(x_{2},x_{1})\Gamma_{x_{1},y_{1}%
}^{2}\right) \\
&  =\frac{1}{2}\mu((x_{1},x_{2}))\Gamma_{x_{1},y_{1}}^{1}+\frac{1}{2}%
\mu((x_{2},x_{1}))\Gamma_{x_{1},y_{1}}^{2}\\
&  =\frac{1}{2}\mu((y_{1},x_{2}))\Gamma_{y_{1},x_{1}}^{1}+\frac{1}{2}%
\mu((x_{2},y_{1}))\Gamma_{y_{1},x_{1}}^{2}\\
&  =\bar{\mu}((y_{1},x_{1}))\left(  \rho(y_{1},x_{2})\Gamma_{y_{1},x_{1}}%
^{1}+\rho(x_{2},y_{1})\Gamma_{y_{1},x_{1}}^{2}\right) \\
&  =\bar{\mu}(\mathbf{y})\Gamma_{\mathbf{y},\mathbf{x}}^{\infty},
\end{align*}
where we have used the definitions of $\rho$ and $\Gamma^{\infty}$, $\mu
(x_{1},x_{2})=\mu_{1}(x_{1})\mu_{2}(x_{2})$, and the reversibility of the rate
matrices with respect to $\mu_{1}$ and $\mu_{2}$. For other choices of
$\mathbf{y}$ the calculations are completely analogous.

In addition to reversibility, for the purpose of large deviation results we
assume that $\mu_{1}$ and $\mu_{2}$ are the unique invariant measures of the
Markov processes with rate matrices $\Gamma^{1}$ and $\Gamma^{2}$,
respectively. In the finite state setting considered here, uniqueness of the
invariant distributions can be ensured by assuming that all states
communicate. Note that under this condition $\mu(\mathbf{x})>0$ for all
$\mathbf{x}\in\mathcal{S}^{K}$. For a general compact state space
$\mathcal{S}$ see \cite{dupliu} for a set of sufficient conditions for the
large deviation results to hold.

By the results of \cite{dupliu}, the sequence $\{\nu_{T}\}_{T}$ satisfies a
large deviation principle on $\mathcal{P}(\mathcal{S}^{K})$ with convex and
lower semicontinuous rate function $J$ given by
\[
J(\nu)=\sum_{\mathbf{x}\in\mathcal{S}^{K}}q^{\infty}(\mathbf{x})\theta
(\mathbf{x})\bar{\mu}(\mathbf{x})-\sum_{\mathbf{x},\mathbf{y}\in
\mathcal{S}^{K}}\theta^{1/2}(\mathbf{x})\theta^{1/2}(\mathbf{y})\Gamma
_{\mathbf{x},\mathbf{y}}^{\infty}\bar{\mu}(\mathbf{x}),
\]
where $\theta(\mathbf{x})=\nu(\mathbf{x})/\bar{\mu}(\mathbf{x})$,
$\mathbf{x}\in\mathcal{S}^{K}$. We show in Lemma \ref{lem:strict_convex} in
the appendix that $J$ is strictly convex.

The empirical measure $\nu_{T}^{\infty}$ is of interest because although it is
$\eta_{T}^{\infty}$ that is used for all computational purposes, in an
infinite swapping algorithm it is $\nu_{T}^{\infty}$ that is simulated and
from which one obtains $\eta_{T}^{\infty}$. The explicit connection between
the two empirical measures is through the mapping $M:\mathcal{P}%
(\mathcal{S}^{K})\rightarrow\mathcal{P}(\mathcal{S}^{K})$ given by
\begin{equation}
(M\nu)(\mathbf{x})=\rho(x_{1},\dots,x_{K})\sum_{\sigma\in\Sigma_{K}}%
\nu(x_{\sigma^{-1}(1)},\dots,x_{\sigma^{-1}(K)}), \label{eq:mapM}%
\end{equation}
for $\mathbf{x}\in\mathcal{S}^{K}$, $\nu\in\mathcal{P}(\mathcal{S}^{K})$. For
$K=2$ the definition simplifies to
\begin{equation}
(M\nu)(x_{1},x_{2})=\rho(x_{1},x_{2})[\nu(x_{1},x_{2})+\nu(x_{2},x_{1})].
\label{eq:mapM2}%
\end{equation}
To see that \eqref{eq:mapM} maps probability measures to probability measures,
take any $\nu\in\mathcal{P}(\mathcal{S}^{K})$ [see (\ref{eqn:testf})]. Then
since $\sum_{\sigma\in\Sigma_{K}}\rho^{\sigma}(\mathbf{x})=1$ for any
$\mathbf{x}\in\mathcal{S}$,
\begin{align*}
\sum_{\mathbf{x}\in\mathcal{S}^{K}}(M\nu)(\mathbf{x})  &  =\sum_{\mathbf{x}%
\in\mathcal{S}^{K}}\left(  \rho(\mathbf{x})\sum_{\sigma\in\Sigma_{K}}%
\nu(x_{\sigma^{-1}(1)},\dots,x_{\sigma^{-1}(K)})\right) \\
&  =\sum_{\sigma\in\Sigma_{K}}\sum_{\mathbf{x}\in\mathcal{S}^{K}}%
\rho(\mathbf{x})\nu(\mathbf{x}^{\sigma})\\
&  =\sum_{\sigma\in\Sigma_{K}}\sum_{\mathbf{x}\in\mathcal{S}^{K}}%
\rho(\mathbf{x}^{\sigma})\nu(\mathbf{x})\\
&  =\sum_{\mathbf{x}\in\mathcal{S}^{K}}\sum_{\sigma\in\Sigma_{K}}%
\rho(\mathbf{x}^{\sigma})\nu(\mathbf{x})\\
&  =1.
\end{align*}
Thus $M\nu$ is indeed a probability measure on $\mathcal{S}^{K}$. Furthermore,
it is easy to check that $\eta_{T}^{\infty}=M\nu_{T}$ and $\mu=M\bar{\mu}$. In
general, $M$ is the map that takes a measure associated with the symmetrized
variables and maps it to a corresponding measure for the unsymmetrized variables.

The form of $M$ for the case of two temperatures, given in \eqref{eq:mapM2},
highlights a sort of symmetric property of the mapping: As long as the total
mass under $\nu$ of the two points $(x,y)$ and $(y,x)$ is kept fixed, the
image measure $M\nu$ will remain the same. The analogue of this of course
holds for $K>2$ as well and it turns out to be an essential property for
studying infinite swapping.

Once the mapping $M$ that takes $\nu_{T}$ to $\eta_{T}^{\infty}$ has been
identified, together with the large deviations principle for $\nu_{T}$, the
joint large deviation principle for $(\eta_{T}^{\infty},\rho_{T})$ is obtained
through an application of the contraction principle. For a function $f$ and
probability measure $\gamma$ on $\mathcal{S}^{K}$ let $\langle f,\gamma
\rangle=\sum_{\mathbf{x}\in\mathcal{S}^{K}}f(\mathbf{x})\gamma(\mathbf{x})$.

\begin{proposition}
\label{prop:jointLDP} Suppose that $\mu_{1}$ and $\mu_{2}$ are the unique
invariant measures for $\Gamma^{1}$ and $\Gamma^{2}$, respectively. The
sequence $\{(\eta_{T}^{\infty},\rho_{T})\}$ satisfies a large deviation
principle on $\mathcal{P}(\mathcal{S}^{K})\times\mathcal{P}(\Sigma_{K})$ with
rate function
\[
I(\gamma,w)=\inf\left\{  J(\nu):\ \gamma=M\nu,\ \langle\rho^{\sigma}%
,\nu\rangle=w_{\sigma},\ \sigma\in\Sigma_{K}\right\}  .
\]

\end{proposition}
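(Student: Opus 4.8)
The plan is to obtain this as a direct application of the contraction principle to the large deviation principle for $\{\nu_T\}$ already established via \cite{dupliu}, using the fact that $(\eta_T^\infty,\rho_T)$ is a continuous image of $\nu_T$. First I would verify that the map $\Phi:\mathcal{P}(\mathcal{S}^K)\to\mathcal{P}(\mathcal{S}^K)\times\mathcal{P}(\Sigma_K)$ given by $\Phi(\nu)=\bigl(M\nu,(\langle\rho^\sigma,\nu\rangle)_{\sigma\in\Sigma_K}\bigr)$ is well-defined and continuous. Continuity is immediate since $\mathcal{S}^K$ is finite, so both $M$ (which is linear in $\nu$ by \eqref{eq:mapM}) and each $\nu\mapsto\langle\rho^\sigma,\nu\rangle$ are linear, hence continuous, on the finite-dimensional simplex; that $\Phi(\nu)$ lands in the stated product of simplices follows from the computation already given in the text showing $M\nu\in\mathcal{P}(\mathcal{S}^K)$, together with $\sum_{\sigma}\langle\rho^\sigma,\nu\rangle = \langle\sum_\sigma\rho^\sigma,\nu\rangle = \langle 1,\nu\rangle = 1$ since $\sum_{\sigma\in\Sigma_K}\rho(\mathbf{x}^\sigma)=1$ for every $\mathbf{x}$.

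Next I would recall that $\eta_T^\infty = M\nu_T$ (noted in the excerpt) and, from the definition $\rho_T^\sigma = \frac{1}{T}\int_0^T\rho((\mathbf{Y}^\infty(t))^\sigma)\,dt = \langle\rho^\sigma,\nu_T\rangle$, conclude that $(\eta_T^\infty,\rho_T)=\Phi(\nu_T)$ identically. Then the contraction principle (see e.g. Dembo–Zeitouni, or the weak-convergence formulation in the references) applies: since $\{\nu_T\}$ satisfies an LDP on the compact space $\mathcal{P}(\mathcal{S}^K)$ with good rate function $J$ (goodness is automatic here as $J$ is lower semicontinuous on a compact space, and $J$ is finite-valued and continuous by the strict convexity claimed in Lemma~\ref{lem:strict_convex}), the image sequence $\{\Phi(\nu_T)\}$ satisfies an LDP on $\mathcal{P}(\mathcal{S}^K)\times\mathcal{P}(\Sigma_K)$ with rate function
\[
I(\gamma,w)=\inf\{J(\nu):\ \Phi(\nu)=(\gamma,w)\}=\inf\{J(\nu):\ \gamma=M\nu,\ \langle\rho^\sigma,\nu\rangle=w_\sigma,\ \sigma\in\Sigma_K\},
\]
which is exactly the asserted formula. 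I would also note the standard convention $\inf\emptyset=+\infty$, so that $I(\gamma,w)=+\infty$ whenever $(\gamma,w)$ is not in the range of $\Phi$.

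I do not expect a serious obstacle here; the statement is essentially a packaging of the contraction principle. The only points requiring a word of care are (i) confirming that the uniqueness-of-invariant-measure hypothesis on $\Gamma^1,\Gamma^2$ is precisely what is needed to invoke the LDP for $\{\nu_T\}$ from \cite{dupliu} (via reversibility of $\Gamma^\infty$ with respect to $\bar\mu$, established in the excerpt, and positivity $\mu(\mathbf{x})>0$), and (ii) making sure the topology on $\mathcal{P}(\Sigma_K)\cong$ the $(K!-1)$-simplex matches the weak topology used for $\{\nu_T\}$, which is trivial since both spaces are finite-dimensional compact sets with the Euclidean topology. With those remarks in place the proof is complete.
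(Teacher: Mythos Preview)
Your proposal is correct and follows essentially the same route as the paper: verify that $(\eta_T^\infty,\rho_T)=\Phi(\nu_T)$ for the continuous map $\Phi(\nu)=(M\nu,(\langle\rho^\sigma,\nu\rangle)_\sigma)$, then invoke the contraction principle on the LDP for $\{\nu_T\}$. The only quibble is that strict convexity of $J$ is not what gives goodness of the rate function; rather, lower semicontinuity on the compact space $\mathcal{P}(\mathcal{S}^K)$ suffices, so that remark is superfluous.
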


\begin{proof}
The map $M$ is continuous with respect to the weak topology (which is the same
as the standard Euclidean topology if we think of $\mathcal{P}(\mathcal{S}%
^{K})$ as embedded into $\mathbb{R}^{\left\vert \mathcal{S}\right\vert ^{K}}%
$). Furthermore, the components of $\rho_{T}$ can be expressed as expectations
with respect to $\nu_{T}$:
\[
\rho_{T}^{\sigma}=\sum_{\mathbf{x}\in\mathcal{S}^{K}}\rho(\mathbf{x}^{\sigma
})\nu_{T}(\mathbf{x})=\langle\rho^{\sigma},\nu_{T}\rangle,\ \sigma\in
\Sigma_{K}.
\]
Consider the map from $\mathcal{P}(\mathcal{S}^{K})$ to $\mathcal{P}%
(\Sigma_{K})$ defined by $\nu\mapsto(\langle\rho^{\sigma_{1}},\nu\rangle
,\dots,\langle\rho^{\sigma_{K!}},\nu\rangle)$. This is a continuous map with
respect to the weak topology and precisely the one that takes $\nu_{T}$ to
$\rho_{T}$. Thus, the pair $(\eta_{T}^{\infty},\rho_{T})$ is obtained by
applying a continuous map to $\nu_{T}$. It follows from the contraction
principle that the sequence satisfies a large deviation principle with the
prescribed rate function as $T\rightarrow\infty$ .
\end{proof}

\medskip In this paper we start with the symmetrized process $\mathbf{Y}%
^{\infty}$ and its associated empirical measure, and then study various large
deviation properties of infinite swapping through mappings and the contraction
principle. In \cite{dupliupladol} the large deviation principle for $\eta
_{T}^{\infty}$ is proved without reference to the symmetrized process (though
for diffusion processes rather than the jump processes discussed here). For
completeness some comments on the relation between the rate directly obtained
in \cite{dupliupladol} (more precisely the analogous rate appropriate for the
model considered here) and the joint large deviation principle of Proposition
\ref{prop:jointLDP} are appropriate.

Note that from the joint large deviation principle one immediately obtains
large deviation principles for the marginals $\eta_{T}^{\infty}$ and $\rho
_{T}$. Denote the corresponding rate functions $I_{1}$ and $I_{2}$:
\[
I_{1}(\gamma)=\inf\left\{  J(\nu):\ M\nu=\gamma\right\}  ,\ \gamma
\in\mathcal{P}(\mathcal{S}^{K}),
\]
and
\begin{equation}
\label{eq:I2}I_{2}(w)=\inf\left\{  J(\nu):\ \langle\rho^{\sigma},\nu
\rangle=w_{\sigma},\ \sigma\in\Sigma_{K}\right\}  ,\ w=\{w_{\sigma}%
\}\in\mathcal{P}(\Sigma_{K}).
\end{equation}
Let $I^{\infty}$ denote the rate function analogous to that of
\cite{dupliupladol} that would be appropriate for $\{\eta_{T}^{\infty}\}$,
\begin{align*}
I^{\infty} (\gamma) = \sum_{\mathbf{x}\in\mathcal{S}^{2}} q(\mathbf{x})
\gamma(\mathbf{x}) - \sum_{\mathbf{x}, \mathbf{y} \in\mathcal{S}^{2}} \left[
\frac{d\gamma}{d\mu}\right]  ^{1/2}(\mathbf{x}) \left[  \frac{d\gamma}{d\mu
}\right]  ^{1/2}(\mathbf{y}) \Gamma_{\mathbf{x}, \mathbf{y}} \mu(\mathbf{x})
\end{align*}
where $\Gamma$ is the rate matrix associated with the original uncoupled
processes. This rate function is finite only for those measures $\gamma$ that
satisfy, for $K=2$,
\begin{equation}
\lbrack d\gamma/d\mu](x_{1},x_{2})=[d\gamma/d\mu](x_{2},x_{1}).
\label{eqn:sym}%
\end{equation}
This constraint is immediately satisfied by any measure $\gamma$ on the form
$\gamma=M\nu$, for some $\nu\in\mathcal{P}(\mathcal{S}^{2})$, and thus it is
built in to the definitions of $I$ and $I_{1}$. That is, if $\gamma$ violates
this condition of correct relative weights on $(x_{1},x_{2})$ and
$(x_{2},x_{1})$, then there is no $\nu$ for which $\gamma=M\nu$ and both $I$
and $I_{1}$ are by definition infinite for such measures. To see that this is
true is a short calculation using only the definition of $M$: Take $\gamma
\in\mathcal{P}(\mathcal{S}^{2})$ for which there is some $\nu$ such that
$\gamma=M\nu$ and $\gamma\ll\mu$. Since these are discrete measures, for any
$(x_{1},x_{2})\in\mathcal{S}^{2}$,
\begin{align*}
\left[  \frac{d\gamma}{d\mu}\right]  (x_{1},x_{2})  &  =\frac{\gamma
(x_{1},x_{2})}{\mu(x_{1},x_{2})}\\
&  =\frac{(M\nu)(x_{1},x_{2})}{\mu(x_{1},x_{2})}\\
&  =\frac{\rho(x_{1},x_{2})[\nu(x_{1},x_{2})+\nu(x_{2},x_{1})]}{\mu
(x_{1},x_{2})}\\
&  =\frac{\nu(x_{1},x_{2})+\nu(x_{2},x_{1})}{2\bar{\mu}(x_{1},x_{2})},
\end{align*}
and it follows by symmetry that $[d\gamma/d\mu](x_{2},x_{1})$ is equal to this
as well. It should be clear that the relation also holds for $K>2$.
Furthermore, it continues to be true in the case of an uncountable state space
$\mathcal{S}$, most easily seen by assuming that all measures involved have
densities with respect to some common reference measure and using the same
kind of argument as here.

Although the INS dynamics use symmetrized dynamics, there is no reason that
its empirical measure must be symmetric. However, if $\gamma$ is a point in
the support of the weighted empirical measure (\ref{eqn:weightedEM}), then the
most likely empirical measure for the INS process that leads to $\gamma$ will
be symmetric. Thus for probability measures $\gamma$ that satisfy the weighted
symmetry condition (\ref{eqn:sym}) we have the following result.

\begin{proposition}
\label{prop:symMeasure} Suppose $\gamma\in\mathcal{P}(\mathcal{S}^{K})$ is
such that $\mathcal{M}(\gamma)=\{\nu\in\mathcal{P}(\mathcal{S}^{K}%
):\gamma=M\nu\}$ is non-empty. Then
\[
\inf\{J(\nu):\nu\in\mathcal{M}(\gamma)\}
\]
is attained at the symmetric $\nu_{sym}$ given by
\begin{equation}
\nu_{sym}(x_{1},\dots,x_{K})=\frac{\gamma(x_{1},\dots,x_{K})}{K!\rho
(\mathbf{x})}. \label{eq:symNu}%
\end{equation}

\end{proposition}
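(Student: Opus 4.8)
The plan is to exploit two structural features: the linear part of $J$ is already pinned down by the constraint $\gamma = M\nu$, and the remaining quadratic part is a \emph{nonnegative} combination of cross terms $\theta^{1/2}(\mathbf{x})\theta^{1/2}(\mathbf{y})$ whose coefficients are invariant under the diagonal action of $\Sigma_K$, by Lemma~\ref{lemma:symmetry} and the permutation-invariance of $\bar\mu$. A symmetrization together with the Cauchy--Schwarz inequality then shows that this combination is maximized over $\mathcal{M}(\gamma)$ by the orbit-constant measure $\nu_{sym}$, and since the linear part is constant on $\mathcal{M}(\gamma)$ this yields $J(\nu)\ge J(\nu_{sym})$ for every $\nu\in\mathcal{M}(\gamma)$. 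Throughout we use that $\mu(\mathbf{x})>0$ for all $\mathbf{x}$ (hence $\bar\mu(\mathbf{x})>0$ and $\rho(\mathbf{x})>0$), which holds under the standing communication assumption of this section.

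First I would check that $\nu_{sym}\in\mathcal{M}(\gamma)$. Pick any $\nu\in\mathcal{M}(\gamma)$ (possible by hypothesis). From \eqref{eq:mapM}, $\gamma(\mathbf{x})/\rho(\mathbf{x}) = \sum_{\sigma\in\Sigma_K}\nu(\mathbf{x}^\sigma)$, and re-indexing the sum shows the right-hand side is invariant under permuting the coordinates of $\mathbf{x}$; hence $\nu_{sym}$ in \eqref{eq:symNu} is a well-defined nonnegative function that is constant on $\Sigma_K$-orbits, with $\sum_{\mathbf{x}}\nu_{sym}(\mathbf{x}) = \frac{1}{K!}\sum_{\sigma}\sum_{\mathbf{x}}\nu(\mathbf{x}^\sigma) = 1$, so $\nu_{sym}\in\mathcal{P}(\mathcal{S}^K)$. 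Being orbit-constant, $(M\nu_{sym})(\mathbf{x}) = \rho(\mathbf{x})\sum_\sigma\nu_{sym}(\mathbf{x}^\sigma) = K!\,\rho(\mathbf{x})\,\nu_{sym}(\mathbf{x}) = \gamma(\mathbf{x})$. Next I would rewrite $J$: separating the $\mathbf{x}=\mathbf{y}$ term in the double sum and using $\Gamma^\infty_{\mathbf{x},\mathbf{x}}=-q^\infty(\mathbf{x})$ gives, for every $\nu$,
\[
J(\nu) = 2\sum_{\mathbf{x}\in\mathcal{S}^K} q^\infty(\mathbf{x})\nu(\mathbf{x}) - R(\nu),\qquad R(\nu)\doteq\sum_{\mathbf{x}\neq\mathbf{y}}\theta^{1/2}(\mathbf{x})\theta^{1/2}(\mathbf{y})\,\Gamma^\infty_{\mathbf{x},\mathbf{y}}\,\bar\mu(\mathbf{x}),
\]
with $\theta=\nu/\bar\mu$. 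Using $q^\infty(\mathbf{x})=q^\infty(\mathbf{x}^\sigma)$ (Lemma~\ref{lemma:symmetry}) and averaging over the re-indexings $\mathbf{x}\mapsto\mathbf{x}^\sigma$, for $\nu\in\mathcal{M}(\gamma)$ one gets $\sum_\mathbf{x} q^\infty(\mathbf{x})\nu(\mathbf{x}) = \sum_\mathbf{x} q^\infty(\mathbf{x})\frac{1}{K!}\sum_\sigma\nu(\mathbf{x}^\sigma) = \sum_\mathbf{x} q^\infty(\mathbf{x})\nu_{sym}(\mathbf{x})$, a quantity determined by $\gamma$. Thus minimizing $J$ over $\mathcal{M}(\gamma)$ is equivalent to maximizing $R$ over $\mathcal{M}(\gamma)$.

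For the comparison $R(\nu)\le R(\nu_{sym})$, write $\psi=\theta^{1/2}$. Because $\bar\mu$ is permutation-invariant and $\Gamma^\infty_{\mathbf{x}^\sigma,\mathbf{y}^\sigma}=\Gamma^\infty_{\mathbf{x},\mathbf{y}}$, the substitution $(\mathbf{x},\mathbf{y})\mapsto(\mathbf{x}^\sigma,\mathbf{y}^\sigma)$ shows $R(\nu)=\sum_{\mathbf{x}\neq\mathbf{y}}\Gamma^\infty_{\mathbf{x},\mathbf{y}}\bar\mu(\mathbf{x})\,\psi(\mathbf{x}^\sigma)\psi(\mathbf{y}^\sigma)$ for each $\sigma$, so averaging over $\sigma\in\Sigma_K$ gives
\[
R(\nu)=\sum_{\mathbf{x}\neq\mathbf{y}}\Gamma^\infty_{\mathbf{x},\mathbf{y}}\bar\mu(\mathbf{x})\Big(\frac{1}{K!}\sum_{\sigma\in\Sigma_K}\psi(\mathbf{x}^\sigma)\psi(\mathbf{y}^\sigma)\Big).
\]
By Cauchy--Schwarz, $\frac{1}{K!}\sum_\sigma\psi(\mathbf{x}^\sigma)\psi(\mathbf{y}^\sigma)\le\big(\frac{1}{K!}\sum_\sigma\psi(\mathbf{x}^\sigma)^2\big)^{1/2}\big(\frac{1}{K!}\sum_\sigma\psi(\mathbf{y}^\sigma)^2\big)^{1/2}$, and, since $\bar\mu$ is permutation-invariant and $\gamma=M\nu$,
\[
\frac{1}{K!}\sum_{\sigma}\psi(\mathbf{x}^\sigma)^2=\frac{1}{K!\,\bar\mu(\mathbf{x})}\sum_\sigma\nu(\mathbf{x}^\sigma)=\frac{\gamma(\mathbf{x})}{K!\,\rho(\mathbf{x})\,\bar\mu(\mathbf{x})}=\frac{\nu_{sym}(\mathbf{x})}{\bar\mu(\mathbf{x})},
\]
i.e. the right-hand factor is exactly $\psi_{sym}(\mathbf{x})\doteq\sqrt{\nu_{sym}(\mathbf{x})/\bar\mu(\mathbf{x})}$. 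The coefficients $\Gamma^\infty_{\mathbf{x},\mathbf{y}}\bar\mu(\mathbf{x})$ are nonnegative for $\mathbf{x}\neq\mathbf{y}$ by \eqref{eq:defGamma}, so the bound can be inserted term by term, yielding $R(\nu)\le\sum_{\mathbf{x}\neq\mathbf{y}}\Gamma^\infty_{\mathbf{x},\mathbf{y}}\bar\mu(\mathbf{x})\psi_{sym}(\mathbf{x})\psi_{sym}(\mathbf{y})=R(\nu_{sym})$. Combining with the previous paragraph, $J(\nu)=2\sum_\mathbf{x} q^\infty(\mathbf{x})\nu_{sym}(\mathbf{x})-R(\nu)\ge 2\sum_\mathbf{x} q^\infty(\mathbf{x})\nu_{sym}(\mathbf{x})-R(\nu_{sym})=J(\nu_{sym})$ for every $\nu\in\mathcal{M}(\gamma)$, which proves the claim.

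The main obstacle is this final comparison step, and the key to making it work is to symmetrize at the level of $\theta=\psi^2$ (which is affine in $\nu$ and therefore interacts cleanly with the affine constraint $\gamma=M\nu$) rather than at the level of $\psi$ or $\nu$ directly: this makes $\psi_{sym}$ the quadratic mean of $\psi$ over each orbit, so that Cauchy--Schwarz points in the favorable direction. The other point requiring care is the sign structure of $\Gamma^\infty$: its off-diagonal entries are nonnegative (permitting the term-by-term bound), while the diagonal is handled separately and, together with the original linear term, produces precisely the $\gamma$-determined constant $2\sum_\mathbf{x} q^\infty(\mathbf{x})\nu_{sym}(\mathbf{x})$. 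Uniqueness of the minimizer is not claimed here, but it would follow from the strict convexity of $J$ (Lemma~\ref{lem:strict_convex}).
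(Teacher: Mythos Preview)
Your argument is correct and takes a genuinely different route from the paper. The paper's proof is nonconstructive: it observes that $\mathcal{M}(\gamma)$ is closed and convex, invokes the strict convexity of $J$ (Lemma~\ref{lem:strict_convex}) to get a unique minimizer, checks (using Lemma~\ref{lemma:symmetry} and the symmetry of $\bar\mu$) that $J(\nu)=J(\nu^{\sigma})$ for every $\sigma\in\Sigma_K$, and concludes by contradiction that the minimizer must be permutation-invariant, hence equal to $\nu_{sym}$. You instead prove the inequality $J(\nu)\ge J(\nu_{sym})$ directly: you split $J$ into a linear part that is pinned down by the constraint (via the orbit-invariance of $q^{\infty}$) and a nonnegative off-diagonal quadratic form $R(\nu)$, then use the diagonal $\Sigma_K$-invariance of the coefficients to symmetrize and apply Cauchy--Schwarz, recognizing the resulting orbit quadratic means as $\psi_{sym}$. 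Your approach is more hands-on but avoids the strict-convexity lemma entirely and gives additional structural information (the minimum comes precisely from maximizing $R$); the paper's approach is shorter and conceptually cleaner, but leans on an auxiliary result and gives no quantitative comparison between $J(\nu)$ and $J(\nu_{sym})$. Both rely on Lemma~\ref{lemma:symmetry} in essentially the same way.
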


Before proceeding with the proof a short remark on the form of $\nu_{sym}$ is
be in place. Although not necessarily apparent at first, $\nu_{sym}$ is indeed
symmetric:
\[
\nu_{sym}(x_{1},\dots,x_{K})=\nu_{sym}(x_{\sigma^{-1}(1)},\dots,x_{\sigma
^{-1}(K)}),
\]
for any $\mathbf{x}\in\mathcal{S}^{K}$ and permutation $\sigma\in\Sigma_{K}$.
This follows from the weighted symmetry of $\gamma$ as expressed in
(\ref{eqn:sym}) which holds for any measure obtained as a mapping through $M$,
and the definition of $\rho$. For the sake of clarity we show for $K=2$ that
under the mapping $M$, $\nu_{sym}$ returns the measure $\gamma$. Indeed, for
any $(x_{1},x_{2})\in\mathcal{S}^{2}$,
\begin{align*}
\left(  M\nu_{sym}\right)  (x_{1},x_{2})  &  =\rho(x_{1},x_{2})\left[
\nu_{sym}(x_{1},x_{2})+\nu_{sym}(x_{2},x_{1})\right] \\
&  =\rho(x_{1},x_{2})\left(  \frac{\gamma(x_{1},x_{2})}{2\rho(x_{1},x_{2}%
)}+\frac{\gamma(x_{2},x_{1})}{2\rho(x_{2},x_{1})}\right) \\
&  =\frac{\mu(x_{1},x_{2})}{2}\left(  \frac{\gamma(x_{1},x_{2})}{\mu
(x_{1},x_{2})}+\frac{\gamma(x_{2},x_{1})}{\mu(x_{2},x_{1})}\right)  .
\end{align*}
The condition (\ref{eqn:sym}) on the relative weights $[d\gamma/d\mu]$ then
shows that this is equal to $\gamma(x_{1},x_{2})$.

Since the rate function of a large deviation principle is unique the following
result is to be expected.

\begin{corollary}
\label{cor:rateFcn} For any $\gamma\in\mathcal{P}(\mathcal{S}^{K})$,
$I_{1}(\gamma) = I^{\infty}(\gamma)$.
\end{corollary}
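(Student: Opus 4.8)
The plan is to show the two expressions for $I_1(\gamma)=I^\infty(\gamma)$ agree by directly comparing $I^\infty(\gamma)$ with $J(\nu_{sym})$, where $\nu_{sym}$ is the minimizer identified in Proposition~\ref{prop:symMeasure}. By that proposition, $I_1(\gamma)=\inf\{J(\nu):M\nu=\gamma\}=J(\nu_{sym})$ whenever $\mathcal M(\gamma)$ is non-empty, and $I_1(\gamma)=+\infty$ otherwise. On the other side, as remarked before the corollary, $I^\infty(\gamma)=+\infty$ unless $\gamma$ satisfies the weighted symmetry condition \eqref{eqn:sym}, and \eqref{eqn:sym} holds for $\gamma$ precisely when $\mathcal M(\gamma)\neq\emptyset$ (indeed, given \eqref{eqn:sym} the measure $\nu_{sym}$ of \eqref{eq:symNu} is a valid pre-image). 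So on the ``infinite'' side the two rate functions have the same effective domain, and it remains to check equality of the finite values.

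For $\gamma$ with $\mathcal M(\gamma)\neq\emptyset$, I would substitute $\nu=\nu_{sym}$, i.e. $\nu_{sym}(\mathbf x)=\gamma(\mathbf x)/(K!\,\rho(\mathbf x))$, into the formula
\[
J(\nu)=\sum_{\mathbf{x}}q^{\infty}(\mathbf{x})\theta(\mathbf{x})\bar{\mu}(\mathbf{x})-\sum_{\mathbf{x},\mathbf{y}}\theta^{1/2}(\mathbf{x})\theta^{1/2}(\mathbf{y})\Gamma_{\mathbf{x},\mathbf{y}}^{\infty}\bar{\mu}(\mathbf{x}),
\]
where $\theta(\mathbf x)=\nu_{sym}(\mathbf x)/\bar\mu(\mathbf x)$. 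Using $\bar\mu(\mathbf x)=\tfrac{1}{K!}\sum_{\sigma}\mu(\mathbf x^{\sigma})$ and $\rho(\mathbf x)=\mu(\mathbf x)/\sum_\sigma\mu(\mathbf x^\sigma)$ one gets the clean identity $\theta(\mathbf x)=\gamma(\mathbf x)/\mu(\mathbf x)=[d\gamma/d\mu](\mathbf x)$, exactly the Radon--Nikodym weight appearing in $I^\infty$. For the first (linear) term I would invoke the relation between $q^\infty$ and the uncoupled rate $q$: by the mixture interpretation of $q^\infty$ noted after \eqref{eqn:rho}, $q^\infty(\mathbf x)\bar\mu(\mathbf x)$ summed appropriately (using Lemma~\ref{lemma:symmetry} to collect permuted terms, and $\rho(\mathbf x)+\rho(\mathbf x^\sigma)$-type sums telescoping to $1$) reduces to $\sum_{\mathbf x}q(\mathbf x)\gamma(\mathbf x)$. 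For the second (quadratic) term, the key point is that $\Gamma^\infty_{\mathbf x,\mathbf y}\bar\mu(\mathbf x)$, when one expands the definition \eqref{eq:defGamma} and uses $\mu(\mathbf x)=\prod\mu_i(x_i)$ together with reversibility of the $\Gamma^i$, reorganizes — after relabelling the permutation index — into $\Gamma_{\mathbf x,\mathbf y}\mu(\mathbf x)$; since $\theta=d\gamma/d\mu$ is permutation-invariant by \eqref{eqn:sym}, the $\theta^{1/2}(\mathbf x)\theta^{1/2}(\mathbf y)$ factors pass through the relabelling unchanged, and the sum collapses to exactly the quadratic term of $I^\infty$. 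Combining, $J(\nu_{sym})=I^\infty(\gamma)$, hence $I_1(\gamma)=I^\infty(\gamma)$.

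Alternatively, and perhaps more cleanly, one can simply observe that $I^\infty$ is (by \cite{dupliupladol}, appropriately transcribed) a rate function governing $\{\eta_T^\infty\}$, that $I_1$ is the rate function for $\{\eta_T^\infty\}$ obtained here via contraction, and that the rate function of a large deviation principle is unique — this is the one-line argument hinted at by the sentence preceding the corollary. I would present the direct computation as the substantive proof and mention the uniqueness argument as a remark, since the paper only claims $I^\infty$ is ``analogous to'' the rate in \cite{dupliupladol} rather than proving the LDP for $\eta_T^\infty$ independently here.

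The main obstacle I anticipate is purely bookkeeping in the quadratic term: correctly tracking how the permutation sum in $\bar\mu$ interacts with the two-term structure of $\Gamma^\infty_{\mathbf x,\mathbf y}$ (the $\rho(x_1,x_2)\Gamma^1+\rho(x_2,x_1)\Gamma^2$ split) and verifying that, after using product structure and reversibility, the $\rho$-weights cancel against the $\bar\mu$ normalization to leave exactly $\Gamma_{\mathbf x,\mathbf y}\mu(\mathbf x)$. For $K>2$ this requires care with which permutation acts on which index, but Lemma~\ref{lemma:symmetry} and the symmetry \eqref{eqn:sym} of $d\gamma/d\mu$ make every rearrangement legitimate; no genuinely new idea is needed beyond what is already established in the excerpt.
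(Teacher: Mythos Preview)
Your proposal is correct and follows exactly the approach the paper indicates: insert the symmetric minimizer $\nu_{sym}$ from Proposition~\ref{prop:symMeasure} into $J$, use the identity $\theta=d\gamma/d\mu$ together with the definitions of $q^{\infty}$, $\Gamma^{\infty}$ and the symmetry constraint \eqref{eqn:sym}, and verify the expression collapses to $I^{\infty}(\gamma)$ (with the infinite cases matching because $\mathcal M(\gamma)\neq\emptyset$ iff \eqref{eqn:sym} holds). The paper likewise notes the uniqueness-of-rate-function heuristic but treats the direct computation as the actual proof, omitting the ``straightforward but cumbersome'' bookkeeping that you have outlined.
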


The result follows almost immediately from Proposition \ref{prop:symMeasure}
by inserting the symmetric measure $\nu_{sym}$ into $J$, using the definitions
of $\gamma^{\infty}$ and $\Gamma^{\infty}$ and the constraint \eqref{eqn:sym}.
The calculations are straightforward but cumbersome and are left out for
brevity.\medskip

\begin{proof}
[Proof of Proposition \ref{prop:symMeasure}]Due to the form of $M$,
$\mathcal{M}(\gamma)$ is a closed, convex set in $\mathcal{P}(\mathcal{S}%
^{K})$. Thus, if $\mathcal{M}(\gamma)$ is non-empty, then by strict convexity
$J$ will uniquely attain its infimum over the set. Using this fact we argue by
convexity that the minimizing measure must indeed be the symmetric $\nu_{sym}$.

For any $\nu\in\mathcal{M}(\gamma)$ let $\nu^{\sigma}$ be a permutation of
$\nu$ according to $\sigma$. For any set $A\subset\mathcal{S}^{K}$,
\[
\nu^{\sigma}(A)=\sum_{\mathbf{x}\in\mathcal{S}^{K}}I\{(x_{1},\dots,x_{K})\in
A\}\nu(d\mathbf{x}^{\sigma}),
\]
or, equivalently, $\nu^{\sigma}(A)=\nu(A^{\sigma})$, where $A^{\sigma
}=\{\mathbf{x}^{\sigma}:\mathbf{x}\in A\}$. The symmetry property of $M$,
discussed after \eqref{eq:mapM2}, ensures that $\nu^{\sigma}\in\mathcal{M}%
(\gamma)$ for every $\sigma\in\Sigma_{K}$. Moreover, since the measure
$\bar{\mu}$ is symmetric, $\nu\ll\bar{\mu}$ implies $\nu^{\sigma}\ll\bar{\mu}%
$. The key to the proof is to show that $J(\nu)=J(\nu^{\sigma})$. Indeed,
assume that this holds and that $\bar{\nu}$ is the unique minimizing measure
in $\mathcal{M}(\gamma)$,
\[
\bar{\nu}=\underset{\nu\in\mathcal{M}(\gamma)}{\operatorname{argmin}}%
\ J(\nu).
\]
If $\bar{\nu}$ is not symmetric, then there is a $\sigma\in\Sigma_{K}$ for
which $\bar{\nu}\neq\bar{\nu}^{\sigma}$ and it follows that $\bar{\nu}%
\neq(1/2)\bar{\nu}+(1/2)\bar{\nu}^{\sigma}\in\mathcal{M}(\gamma)$. By strict
convexity of $J$ and the assumption that $J(\bar{\nu})=J(\bar{\nu}^{\sigma}%
)$,
\[
J\left(  (1/2)\bar{\nu}+(1/2)\bar{\nu}^{\sigma}\right)  <\frac{1}{2}J(\bar
{\nu})+\frac{1}{2}J(\bar{\nu}^{\sigma})=J(\nu),
\]
which contradicts $\bar{\nu}$ being the unique minimizer of $J$ over
$\mathcal{M}(\gamma)$. Hence, the minimizing $\nu$ must be symmetric and
therefore satisfy \eqref{eq:symNu}.

It remains to show that for any $\nu\in\mathcal{M}(\gamma)$, and any
permutation $\sigma$, $J(\nu)=J(\nu^{\sigma})$. Let $\eta^{\sigma}%
(\mathbf{x})=\nu^{\sigma}(\mathbf{x})/\bar{\mu}(\mathbf{x})$. Then
$J(\nu)=J(\nu^{\sigma})$ follows directly from Lemma \ref{lemma:symmetry}
(symmetry properties for $q^{\infty}$ and $\Gamma^{\infty}$) together with the
definition of $\nu^{\sigma}$ and the symmetry of $\bar{\mu}$:
\begin{align*}
J(\nu^{\sigma})  &  =\sum_{\mathbf{x}\in\mathcal{S}^{K}}q^{\infty}%
(\mathbf{x})\eta^{\sigma}(\mathbf{x})\bar{\mu}(\mathbf{x})-\sum_{\mathbf{x}%
,\mathbf{y}\in\mathcal{S}^{K}}(\eta^{\sigma})^{1/2}(\mathbf{x})(\eta^{\sigma
})^{1/2}(\mathbf{y})\Gamma_{\mathbf{x},\mathbf{y}}^{\infty}\bar{\mu
}(\mathbf{x})\\
&  =\sum_{\mathbf{x}\in\mathcal{S}^{K}}q^{\infty}(\mathbf{x}^{\sigma}%
)\eta(\mathbf{x}^{\sigma})\bar{\mu}(\mathbf{x}^{\sigma})-\sum_{\mathbf{x}%
,\mathbf{y}\in\mathcal{S}^{K}}\eta^{1/2}(\mathbf{x}^{\sigma})\eta
^{1/2}(\mathbf{y}^{\sigma})\Gamma_{\mathbf{x}^{\sigma},\mathbf{y}^{\sigma}%
}^{\infty}\bar{\mu}(\mathbf{x}^{\sigma})\\
&  =J(\nu).
\end{align*}
This completes the proof.
\end{proof}

\begin{remark}
\label{remark:generalLDP} Up to this point no results have relied heavily upon
the assumption of a finite state space $S$. Indeed, the large deviation
principle of Proposition \ref{prop:jointLDP} holds in greater generality than
what is presented here. For example, one can consider a compact (this
condition can also be weakened) Polish space $S$ and let jump intensities on
this space to describe the dynamics of the processes. Then, in the case $K=2$,
as long as the measures $\mu_{1}$ and $\mu_{2}$ of interest are the unique
invariant measures for the chosen jump intensities, the large deviation
principle will hold. Sufficient conditions for this can be found in
\cite{dupliu}. Moreover, under reversibility assumptions the symmetry
properties proved in Lemma \ref{lemma:symmetry} also hold for the associated
transition kernels and thus Proposition \ref{prop:symMeasure} remains valid as well.
\end{remark}

\section{An ergodic control problem arising from infinite swapping}

\label{sec:controlProblem} The aim of this section is to introduce a finite
time stochastic control problem, as well as the corresponding ergodic control
problem, that are related to the infinite swapping process $\mathbf{Y}%
^{\infty}$. The control problem introduced here will be used in the following
sections to show asymptotic results regarding the particle-temperature
associations and discuss the behavior and performance of infinite swapping
when the underlying potential landscape exhibits asymmetry. Although all
results stated here are standard in stochastic control, we include them since
they may not be familiar to readers with experience in Monte Carlo methods.

With $\Gamma^{\infty}$ the rate matrix of the infinite swapping process
$\mathbf{Y}^{\infty}$, we simplify notation slightly by letting
$r(\boldsymbol{y},\boldsymbol{z})=\Gamma_{(\boldsymbol{y},\boldsymbol{z}%
)}^{\infty}$ for $\boldsymbol{y},\boldsymbol{z}\in\mathcal{S}^{2}$. The
generator of the process is then given by
\[
\mathcal{L}f(\boldsymbol{y})=\sum_{\boldsymbol{z}\in\mathcal{S}^{2}%
}[f(\boldsymbol{z})-f(\boldsymbol{y})]r(\boldsymbol{y},\boldsymbol{z}%
),\ \boldsymbol{y}\in\mathcal{S}^{2}.
\]
The infinite swapping process $\mathbf{Y}^{\infty}$ takes values in
$\mathcal{D}([0,\infty):\mathcal{S}^{2})$, but for each fixed $T<\infty$ we
can also consider it as an element of $\mathcal{D}([0,T]:\mathcal{S}^{2})$.
Our interest is now in evaluating the normalized expectation $\frac{1}{T}\log
E[e^{-TF(\mathbf{Y}^{\infty}(\cdot))}]$, in the limit as $T\rightarrow\infty$,
for functionals $F:\mathcal{D}([0,T]:\mathcal{S}^{2})\rightarrow\mathbb{R}$
that are of the form
\begin{equation}
F(\mathbf{Y}^{\infty})=\frac{1}{T}\int_{0}^{T}h(\mathbf{Y}^{\infty}(s))ds,
\label{eq:fcnF}%
\end{equation}
for some function $h:\mathcal{S}^{2}\rightarrow\mathbb{R}$. The reason for
studying the quantity $\frac{1}{T}\log E[e^{-TF(\mathbf{Y}^{\infty}(\cdot))}]$
is that, using the large deviation results of Section \ref{sec:LD}, the limit
($T\rightarrow\infty$) can be related to certain optimization problems, which
in turn are of interest for evaluating the performance of infinite swapping.
This is carried out in Sections \ref{sec:PT} and \ref{sec:asymmetry}.
Throughout the section the infinite swapping process is assumed to start in
some state $\mathbf{y}_{0}\in\mathcal{S}^{2}$.

\subsection{A stochastic control problem}

Take $T<\infty$ to be fixed. Because the state space is finite, in formulating
a stochastic control representation for $E[e^{-TF(\mathbf{Y}^{\infty}(\cdot
))}]$ we will be able to restrict to feedback controls. The control space will
be a collection of rates, and therefore takes the form $U\doteq\lbrack
0,\infty)^{\left\vert \mathcal{S}\right\vert ^{2}}$. Let $\mathcal{U}^{T}$ be
the space of functions $u:[0,T]\times\mathcal{S}^{2}\rightarrow U$ that are
continuous in $t$ and for which the $\mathbf{z}$th component of the vector
$u(t,\mathbf{y})$ is positive only if $r(\mathbf{y},\mathbf{z})>0$:
\begin{align*}
\mathcal{U}^{T}  &  =\left\{  u:[0,T]\times\mathcal{S}^{2}\rightarrow
U:\ u(\mathbf{y},t)\text{ continuous in }t,\right. \\
&  \qquad\left.  u(t,\mathbf{y};\boldsymbol{\mathbf{z}})>0\text{ only if
}r(\mathbf{y},\boldsymbol{\mathbf{z}})>0,t\in\lbrack0,T]\right\}  ,
\end{align*}
where $u(t,\mathbf{y};$$\boldsymbol{\mathbf{z}}$$)$ denotes the component of
the vector $u(\mathbf{y},t)\in U$ corresponding to $\mathbf{z}\in
\mathcal{S}^{2}$. Then to each control $u\in\mathcal{U}^{T}$ we will associate
a controlled process $\bar{\mathbf{Y}}^{\infty}$, where for each $t\in
\lbrack0,T]$ and $\mathbf{y}\in\mathcal{S}^{2}$, the set of jump intensities
for $\bar{\mathbf{Y}}^{\infty}$ when in state $\mathbf{y}$ at time $t$ is
given by $u(t,\mathbf{y})\in U$; the jump intensity from $\mathbf{y}$ to
$\boldsymbol{\mathbf{z}}\in\mathcal{S}^{2}$ is $u(t,\mathbf{y};$%
$\boldsymbol{\mathbf{z}}$$)$. Although $\bar{\mathbf{Y}}^{\infty}$ depends on
$u$, this is not made explicit in the notation, though the overbar indicates
we consider a controlled process rather than the original infinite swapping
process $\mathbf{Y}^{\infty}$.

We make a slight abuse of notation and also denote by $u$ the elements in $U$
that the control processes can take on. With this notation the generator of
the controlled process $\bar{\mathbf{Y}}^{\infty}$ is given by $(\mathcal{L}%
^{u(t,\mathbf{y})}f)(\mathbf{y},t)$, where
\[
(\mathcal{L}^{u}f)(\mathbf{y},t)=\sum_{\boldsymbol{\mathbf{z}}\in
\mathcal{S}^{2}}[f(\boldsymbol{\mathbf{z}})-f(\mathbf{y}%
)]u(\boldsymbol{\mathbf{z}}).
\]

To discuss existence and uniqueness (in law) of controlled processes we use a
martingale problem characterization. For a specific feedback control
$u\in\mathcal{U}^{T}$ we say that $u$ has an associated controlled process
starting at $\mathbf{y}_{0}$ at time $t$ if the following holds. On some
probability space $(\Omega,\mathcal{F},P)$, equipped with a filtration
$\{\mathcal{F}_{t}\}_{t\in\lbrack0,T]}$, there exists a Markov process
$\{\bar{\mathbf{Y}}^{\infty}(t):t\in\lbrack0,T]\}$, satisfying $\bar
{\mathbf{Y}}^{\infty}(0)=\mathbf{y}_{0}$ and, for $t>0$,
\[
f(t,\bar{\mathbf{Y}}^{\infty}(t))-f(0,\mathbf{y}_{0})-\int_{0}^{t}\left(
(\mathcal{L}^{u(r,\bar{\mathbf{Y}}^{\infty}(r))}f)(r,\bar{\mathbf{Y}}^{\infty
}(r))+f_{t}(r,\bar{\mathbf{Y}}^{\infty}(r))\right)  dr
\]
is an $\mathcal{F}_{t}$-martingale for all $f:[0,T]\times\mathcal{S}%
^{2}\rightarrow\mathbb{R}$ that are bounded and continuously differentiable in
$t$. Since $\mathcal{S}^{2}$ is a finite space and we consider controls that
are bounded in the time variable the existence of a solution to the martingale
problem is guaranteed. Indeed, for a control $u\in\mathcal{U}^{T}$, one can
explicitly construct an associated controlled process $\bar{\mathbf{Y}%
}^{\infty}$ that solves the martingale problem by taking the correct
exponential clocks etc., see \cite[Chapter 4]{ethkur}. This process is in fact
simply the jump Markov process with the given (smooth in $t$) jump rates. The
constructed process is a solution to the associated martingale problem, and
when combined with the Feller property we have that this is indeed the unique
solution \cite{rogwil,ethkur}.

A key ingredient in what will follow in this and the next two sections is the
following stochastic control representation. For $F:\mathcal{D}%
([0,T]:\mathcal{S}^{2})\rightarrow\mathbb{R}$ of the form \eqref{eq:fcnF},
\begin{align}
&  -\log E\left[  e^{-TF(\mathbf{Y}^{\infty})}\right]  \label{eq:controlRep}\\
&  \quad=\inf_{u\in\mathcal{U}^{T}}E\left[  \int_{0}^{T}\left(  \sum
_{\boldsymbol{\mathbf{z}}\in\mathcal{S}^{2}:r(\boldsymbol{z},\bar{\mathbf{Y}%
}^{\infty}(s))>0}r(\bar{\mathbf{Y}}^{\infty}(s),\boldsymbol{\mathbf{z}}%
)\ell\left(  \frac{u(s,\bar{\mathbf{Y}}^{\infty}(s);\boldsymbol{\mathbf{z}}%
)}{r(\bar{\mathbf{Y}}^{\infty}(s),\boldsymbol{\mathbf{z}})}\right)
+h(\bar{\mathbf{Y}}^{\infty}(s))\right)  ds\right]  ,\nonumber
\end{align}
where $\bar{\mathbf{Y}}^{\infty}(0)=\mathbf{y}_{0}$, the infimum is over all
controls $u$, and $\ell$ is the function
\[
\ell(x)=%
\begin{cases}
x\log x-x+1, & x\geq0,\\
\infty, & \text{otherwise}.
\end{cases}
\]
The right-hand side of \eqref{eq:controlRep} is a stochastic control problem
with running cost $c:\mathcal{S}^{2}\times U\rightarrow\mathbb{R}$ given by
\begin{equation}
c(\mathbf{y},u)=\sum_{\mathbf{z}\in\mathcal{S}^{2}:r(\mathbf{y}%
,\boldsymbol{\mathbf{z}})>0}r(\mathbf{y},\mathbf{z})\ell\left(  \frac
{u(\mathbf{z})}{r(\mathbf{y},\mathbf{z})}\right)  +h(\mathbf{y}%
).\label{eq:costFcn}%
\end{equation}
Representations such as \eqref{eq:controlRep} are commonly used in connection
with large deviations and similar results can be found in, e.g.,
\cite{boudup,buddupmar2,dupwell4,fle}.

To discuss the dynamic programming equation associated with the stochastic
control problem in \eqref{eq:controlRep}, define $\bar{W}^{T}(t,\mathbf{y})$
to be the conditional version of the right-hand side of the representation:
\begin{equation}
\bar{W}^{T}(t,\mathbf{y})\doteq\inf_{u\in\mathcal{U}^{T}}E_{t,\mathbf{y}%
}\left[  \int_{t}^{T}c\left(  \bar{\mathbf{Y}}^{\infty}(s),u(s,\bar
{\mathbf{Y}}^{\infty}(s))\right)  \right]  , \label{eqn:control_prom}%
\end{equation}
where $E_{t,\mathbf{y}}$ denotes conditional expectation with respect to
$\bar{\mathbf{Y}}^{\infty}(t)=\mathbf{y}$. Note that the representation
\eqref{eq:controlRep} then involves the conditional expectation
$E_{0,\mathbf{y}_{0}}$. Proposition \ref{prop:Verification}, which follows
from a standard verification argument, shows that any $C^{1}$ solution of the
dynamic programming equation
\begin{equation}
W_{t}^{T}(t,\mathbf{y})+\inf_{u\in U}\left\{  \mathcal{L}^{u}W^{T}%
(t,\mathbf{y})+c(\mathbf{y},u)\right\}  =0, \label{eq:DPE}%
\end{equation}
with the associated terminal condition $W^{T}(T,\mathbf{y})=0$ for
$\mathbf{y}\in\mathcal{S}^{2}$, is equal to $\bar{W}^{T}$.

Before stating and proving the verification theorem just eluded to, we
consider the optimal control in \eqref{eq:DPE}. From the definition of
$\mathcal{L}^{u}$ it follows that, for any $u\in U$,
\[
\mathcal{L}^{u}W^{T}(t,\mathbf{y})=\sum_{\mathbf{z}\in\mathcal{S}^{2}}\left[
W^{T}(t,\mathbf{z})-W^{T}(t,\mathbf{y})\right]  u(\mathbf{z}).
\]
Straightforward calculus shows that the minimizing vector $u$ in
\eqref{eq:DPE} is given by
\begin{equation}
\bar{u}(t,\mathbf{y};\boldsymbol{\mathbf{z}})=r(\mathbf{y}%
,\boldsymbol{\mathbf{z}})e^{-(W^{T}(t,\boldsymbol{\mathbf{z}})-W^{T}%
(t,\mathbf{y}))},\ \boldsymbol{\mathbf{z}}\in\mathcal{S}^{2}.
\label{eqn:opt_u}%
\end{equation}
Inserting this expression into (\ref{eq:DPE}) gives a total of $N^{2}$ coupled
differential equations for $W^{T}$, one for each $\mathbf{y}\in\mathcal{S}%
^{2}$, which we write as
\begin{equation}
W_{t}^{T}(t,\mathbf{y})=-\sum_{z\in\mathcal{S}^{2}}r(\mathbf{y}%
,\boldsymbol{\mathbf{z}})\left(  1-e^{-(W^{T}(t,\boldsymbol{\mathbf{z}}%
)-W^{T}(t,\mathbf{y}))}\right)  -h(\mathbf{y}). \label{eqn:ODEs}%
\end{equation}
We are now ready to state the relevant verification result. The argument is
standard, but is included for completeness.

\begin{proposition}
\label{prop:Verification} Suppose that $W^{T}:[0,T]\times\mathcal{S}%
^{2}\rightarrow\mathbb{R}$ is a $C^{1}$ (in $t$) solution to the dynamic
programming equation \eqref{eq:DPE}. Define $\bar{u}(t,\mathbf{y}%
;\boldsymbol{\mathbf{z}})\in\mathcal{U}^{T}$ by (\ref{eqn:opt_u}), and let
$\bar{\mathbf{Y}}^{\infty}$ be the corresponding jump Markov process (i.e.,
solution to the martingale problem). Then $W^{T}$ equals the minimal cost
function $\bar{W}^{T}$ defined in (\ref{eqn:control_prom}), and
\[
W^{T}(t,\mathbf{y})=E_{t,\mathbf{y}}\left[  \int_{t}^{T}c\left(
\bar{\mathbf{Y}}^{\infty}(s),\bar{u}(s,\bar{\mathbf{Y}}^{\infty}(s))\right)
ds\right]  ,\ (t,\mathbf{y})\in\lbrack0,T]\times\mathcal{S}^{2},
\]
so that $\bar{u}\in\mathcal{U}^{T}$ is the optimal control and $\bar
{\mathbf{Y}}^{\infty}$ is the optimally controlled process.
\end{proposition}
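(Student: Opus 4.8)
The plan is to carry out the standard two-sided verification argument. First I would establish the lower bound, showing that $W^T(t,\mathbf{y}) \le \bar W^T(t,\mathbf{y})$. Fix any admissible control $u \in \mathcal{U}^T$ and let $\bar{\mathbf{Y}}^\infty$ be the associated controlled process started from $\mathbf{y}$ at time $t$. I would apply the martingale property to the function $W^T$ (which is $C^1$ in $t$ by hypothesis and bounded on the finite state space), obtaining
\[
E_{t,\mathbf{y}}\!\left[W^T(T,\bar{\mathbf{Y}}^\infty(T))\right] - W^T(t,\mathbf{y}) = E_{t,\mathbf{y}}\!\left[\int_t^T \left((\mathcal{L}^{u(s,\bar{\mathbf{Y}}^\infty(s))}W^T)(s,\bar{\mathbf{Y}}^\infty(s)) + W^T_t(s,\bar{\mathbf{Y}}^\infty(s))\right)ds\right].
\]
Using the terminal condition $W^T(T,\cdot)=0$ and the dynamic programming equation \eqref{eq:DPE} in the form $W^T_t(t,\mathbf{y}) = -\inf_{v\in U}\{\mathcal{L}^{v}W^T(t,\mathbf{y}) + c(\mathbf{y},v)\} \le -\mathcal{L}^{u(s,\mathbf{y})}W^T(t,\mathbf{y}) - c(\mathbf{y},u(s,\mathbf{y}))$ pointwise, the integrand is bounded above by $-c(\bar{\mathbf{Y}}^\infty(s),u(s,\bar{\mathbf{Y}}^\infty(s)))$, which yields $W^T(t,\mathbf{y}) \le E_{t,\mathbf{y}}[\int_t^T c(\bar{\mathbf{Y}}^\infty(s),u(s,\bar{\mathbf{Y}}^\infty(s)))\,ds]$. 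Taking the infimum over $u \in \mathcal{U}^T$ gives $W^T(t,\mathbf{y}) \le \bar W^T(t,\mathbf{y})$.

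Next I would establish the matching upper bound by exhibiting an admissible control that achieves equality. The candidate is $\bar u$ defined in \eqref{eqn:opt_u}; by the calculus computation preceding the proposition, $\bar u(t,\mathbf{y};\cdot)$ is precisely the minimizer in \eqref{eq:DPE}, and it is admissible since $\bar u(t,\mathbf{y};\mathbf{z}) = r(\mathbf{y},\mathbf{z}) e^{-(W^T(t,\mathbf{z})-W^T(t,\mathbf{y}))}$ is positive only when $r(\mathbf{y},\mathbf{z})>0$, and is continuous in $t$ because $W^T$ is $C^1$. Let $\bar{\mathbf{Y}}^\infty$ be the corresponding jump Markov process, which exists and is the unique solution of the martingale problem by the discussion preceding the proposition (finite state space, smooth-in-$t$ bounded rates, Feller property). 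Repeating the martingale computation above, but now the inequality from the DPE becomes an equality because $\bar u$ attains the infimum, so that $W^T(t,\mathbf{y}) = E_{t,\mathbf{y}}[\int_t^T c(\bar{\mathbf{Y}}^\infty(s),\bar u(s,\bar{\mathbf{Y}}^\infty(s)))\,ds]$. Combined with the lower bound, this shows $W^T = \bar W^T$, that $\bar u$ is optimal, and that $\bar{\mathbf{Y}}^\infty$ is the optimally controlled process.

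The argument is entirely routine; the only points requiring a little care are the technical prerequisites for applying the martingale property — namely that $W^T(\cdot,\cdot)$ and $W^T_t(\cdot,\cdot)$ are bounded on $[0,T]\times\mathcal{S}^2$ (immediate from the $C^1$ hypothesis and finiteness of $\mathcal{S}^2$), so the stochastic integral against the compensated jump martingale is genuinely a martingale and has zero expectation, and that the running cost $c(\bar{\mathbf{Y}}^\infty(s),u(s,\bar{\mathbf{Y}}^\infty(s)))$ is integrable. For the optimal control $\bar u$ the latter is automatic since $W^T$ is the finite value of the integral; for a general $u \in \mathcal{U}^T$ we may assume without loss of generality that $E_{t,\mathbf{y}}[\int_t^T c\,ds] < \infty$, as otherwise the bound $W^T(t,\mathbf{y}) \le E_{t,\mathbf{y}}[\int_t^T c\,ds]$ holds trivially. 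I do not anticipate any real obstacle here — this is the classical verification lemma specialized to a finite-state controlled jump process.
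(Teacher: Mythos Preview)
Your proposal is correct and follows essentially the same approach as the paper: apply the martingale identity to $W^T$, use the terminal condition and the DPE inequality to bound $W^T$ below the cost of any admissible control, then observe that the specific feedback $\bar u$ from \eqref{eqn:opt_u} attains the infimum in \eqref{eq:DPE} and hence turns the inequality into an equality. The paper's proof is slightly terser and omits the admissibility and integrability remarks you include, but the argument is otherwise identical.
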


\begin{proof}
To emphasize the choice of control, let $E^{u}$ denote expectation when the
control $u\in\mathcal{U}^{T}$ is used and take $\tilde{u}\in\mathcal{U}^{T}$
to be any control with associated controlled process $\tilde{\mathbf{Y}%
}^{\infty}$. From the martingale property it follows that
\begin{align*}
&  E_{t,\mathbf{y}}^{\tilde{u}}\left[  W^{T}(T,\tilde{\mathbf{Y}}^{\infty
}(T))\right] \\
&  \quad=W^{T}(t,\mathbf{y})+E_{t,\mathbf{y}}^{\tilde{u}}\left[  \int_{t}%
^{T}\left(  \mathcal{L}^{\tilde{u}(s,\tilde{\mathbf{Y}}^{\infty}(s))}%
W^{T}(s,\tilde{\mathbf{Y}}^{\infty}(s))+W_{t}^{T}(s,\tilde{\mathbf{Y}}%
^{\infty}(s))\right)  ds\right]  .
\end{align*}
The terminal condition is $W^{T}(T,\mathbf{x})=0$ for all $\mathbf{x}%
\in\mathcal{S}^{2}$ and the left-hand side is thus $0$. Moreover, since
$W^{T}$ solves the dynamic programming equation \eqref{eq:DPE} we have%
\[
W_{t}^{T}(t,\mathbf{y})+\mathcal{L}^{\tilde{u}(t,\mathbf{y})}W^{T}%
(t,\mathbf{y})\geq-c(\mathbf{y},\tilde{u}(t,\mathbf{y})),
\]
and therefore obtain a lower bound on the cost:
\[
W^{T}(t,\mathbf{y})\leq E_{t,\mathbf{y}}^{\tilde{u}}\left[  \int_{t}%
^{T}c(\tilde{\mathbf{Y}}^{\infty}(s),\tilde{u}(s,\tilde{\mathbf{Y}}^{\infty
}(s)))ds\right]  .
\]
If we instead use the optimal control $\bar{u}$, for which the infimum in
\eqref{eq:DPE} is attained, all inequalities become equalities, and therefore
\[
W^{T}(t,\mathbf{y})=E_{t,\mathbf{y}}^{\bar{u}}\left[  \int_{t}^{T}%
c(\bar{\mathbf{Y}}^{\infty}(s),\bar{u}(s,\bar{\mathbf{Y}}^{\infty
}(s)))ds\right]  .
\]
This shows that $W^{T}$ is indeed the minimal cost function and $\bar{u}$ is
the optimal control.
\end{proof}

\medskip Before taking the limit as $T\rightarrow\infty$, suppose we have a
solution $W^{T}$ to \eqref{eq:DPE} and consider the function $V:[0,T]\times
\mathcal{S}^{2}\rightarrow\mathbb{R}$ defined by $V(t,\mathbf{y})\doteq
e^{-W^{T}(t,\mathbf{y})}$. Then $W^{T}$ satisfying (\ref{eqn:ODEs}) implies
that $V$ satisfies the system
\[
0=V_{t}(t,\mathbf{y})+\sum_{\mathbf{z}\in\mathcal{S}^{2}}r(\mathbf{y}%
,\mathbf{z})\left(  V(t,\mathbf{z})-V(t,\mathbf{y})\right)  -h(\mathbf{y}%
)V(t,\mathbf{y}),
\]
with terminal condition $V(T,\mathbf{y})=1$. This is a finite system of linear
ordinary differential equations and existence and uniqueness of a solution
$V$, and thus $W^{T}$, hold. Moreover, it can be shown that in fact
$V(t,\mathbf{y})=E_{t,\mathbf{y}}[\exp-\int_{t}^{T}h(\mathbf{Y}^{\infty
}(s))ds]$ and, since $V(0,0)$ and $W^{T}(0,0)$ correspond to the two
quantities in \eqref{eq:controlRep}, uniqueness of the solution of the system
of ODEs together with Proposition \ref{prop:Verification} then confirm the
representation \eqref{eq:controlRep}.

\subsection{Limit control problem as $T \to\infty$}

As mentioned at the beginning of this section, we are ultimately interested in
the limit as $T\rightarrow\infty$ of the normalized expectation $-\frac{1}%
{T}\log E[e^{-TF(\mathbf{Y}^{\infty})}]$. Given the representation
\eqref{eq:controlRep} this is equivalent to taking the limit of
\[
\inf_{u\in\mathcal{U}^{T}}E\left[  \frac{1}{T}\int_{0}^{T}\left(  \sum
_{z\in\mathcal{S}^{2}}r(\bar{\mathbf{Y}}^{\infty}(s),z)\ell\left(
\frac{u(s,\bar{\mathbf{Y}}^{\infty}(s);z)}{r(\bar{\mathbf{Y}}^{\infty}%
(s),z)}\right)  +h(\bar{\mathbf{Y}}^{\infty}(s))\right)  ds\right]  ,
\]
where $\bar{\mathbf{Y}}^{\infty}$ is the controlled process associated with
control $u$ and the infimum is over all such feedback controls. In light of
the previous subsection, this is precisely $\lim_{T\rightarrow\infty}%
W^{T}(0,\mathbf{y}_{0})/T$, for $\mathbf{y}_{0}\in\mathcal{S}^{2}$, which
falls under the umbrella of ergodic control problems, or \textquotedblleft
average cost per unit time\textquotedblright; some general references are
\cite{fleson,kusdup1}. Since the set of initial conditions is finite
convergence will be uniform with respect to this parameter, and hence it is
not made explicit in the notation.

The limit Bellman equation is
\begin{equation}
\inf_{u\in U}\left\{  \mathcal{L}^{u}W(y)-\gamma+c(\mathbf{y},u)\right\}  =0,
\label{eq:HJeq}%
\end{equation}
where $W$ and $\gamma$ are unknown, with $\gamma$ the sought-after limit, and
$c$ is defined in \eqref{eq:costFcn}. As is well known the solution $W$ to
such an equation is unique only up to an additive constant. Together with the
form of the generator $\mathcal{L}^{u}$, the definition of $c$ implies that,
for each $\mathbf{y}\in\mathcal{S}^{2}$, the Bellman equation \eqref{eq:HJeq}
takes the form
\begin{equation}
\inf_{u\in U}\left\{  \sum_{\mathbf{z}\in\mathcal{S}^{2}}\left(
u(\mathbf{z})[W(\mathbf{z})-W(\mathbf{y})]+r(\mathbf{y},\mathbf{z})\ell\left(
\frac{u(\mathbf{z})}{r(\mathbf{y},\mathbf{z})}\right)  \right)  -\gamma
+h(\mathbf{y})\right\}  =0. \label{eq:HJeq2}%
\end{equation}
The minimizing $\bar{u}$ takes the same form as for the pre-limit problem,
\begin{equation}
\bar{u}(\mathbf{z})=r(\mathbf{y},\mathbf{z})e^{-(W(\mathbf{z})-W(\mathbf{y}%
))}, \label{eqn:opt_c_stat}%
\end{equation}
with the corresponding generator
\[
\mathcal{L}^{\bar{u}(\mathbf{z})}f(y)=\sum_{\mathbf{z}\in\mathcal{S}^{2}%
}r(\mathbf{y},\mathbf{z})e^{-(W(\mathbf{z})-W(\mathbf{y}))}[f(\mathbf{z}%
)-f(\mathbf{y})].
\]
Similar to the equation for the pre-limit control problem in the previous
subsection, inserting the optimal $\bar{u}$ in \eqref{eq:HJeq} yields the
equation
\begin{equation}
0=\sum_{z\in\mathcal{S}^{2}}r(\mathbf{y},\mathbf{z})[1-e^{-(W(\mathbf{z}%
)-W(\mathbf{y}))}]-\gamma+h(\mathbf{y}). \label{eqn:stat_DPE}%
\end{equation}
The main facts we will need regarding this problem are the following. Under
our conditions, which include the ergodicity of the dynamics in the original
infinite swapping process $\mathbf{Y}^{\infty}$, a solution $(\gamma,W)$ to
the dynamic programming equation (\ref{eqn:stat_DPE}) exists and is unique (up
to an additive constant in $W$), and (\ref{eqn:ODEs}) defines and optimal
control. The proof of the first statement follows from classical arguments
based on approximation by so-called \textquotedblleft
discounted\textquotedblright\ control problems, and the second follows from a
verification argument very much like the one used in the last section for the
corresponding finite time problem.

Using the reversibility of the infinite swapping dynamics (discussed in
Section \ref{sec:LD}) shows that the invariant measure $\bar{\nu}$ for the
optimally controlled process is
\[
\bar{\nu}(x_{1},x_{2})=\bar{\mu}(x_{1},x_{2})e^{-2W(x_{1},x_{2})-a}%
,\ (x_{1},x_{2})\in\mathcal{S}^{2},
\]
where $a$ is a normalizing constant. However, $W$ is unique only up to an
additive constant, and so we can assume without loss that for $a=0$, $\bar
{\nu}$ defines a probability measure on $\mathcal{S}^{2}$.

\section{A diagnostic for the convergence of the empirical measure}

\label{sec:PT}

One of the challenges of Monte Carlo when dealing with problems involving rare
events is to determine when the algorithm has converged. For example, in the
setting of MCMC it can happen that the empirical measure appears to have
converged, when in reality the underlying process is stuck in some collection
of metastable states, and significant parts of the state space have not been
visited nearly often enough for a good approximation to the true equilibrium
distribution. Hence it is of interest to know if there are diagnostics that
can determine when convergence has or has not taken place.

In this section we rigorously justify a diagnostic that will tell the user
when INS has \textit{not} converged. More precisely, we will show using a
large deviations analysis that the empirical measure of the
particle/temperature association introduced in Section \ref{sec:LD} provides
such a diagnostic, as do various functionals of this empirical measure (see
Remark \ref{rem:diagnostic}). In particular, $\rho_{T}$ must converge to the
uniform distribution on $\Sigma_{K}$ if the weighted empirical measure
$\eta_{T}^{\infty}$ is to converge to the true stationary distribution.
Although the convergence of $\eta_{T}^{\infty}$ is what is needed for
computational purposes, the functionals of the empirical measure $\rho_{T}$
can be readily observed while running a simulation, and therefore provide
convenient diagnostics.

The main result of this section, Proposition \ref{prop:convTemp}, gives the
precise relation between the convergences of $\rho_{T}$ and $\eta_{T}^{\infty
}$. The result is of an asymptotic character and indicates how convergence of
the empirical measure $\eta_{T}^{\infty}$ to $\mu$ can only occur if there is
also convergence of $\rho_{T}$ to the uniform distribution. Since
$\mathcal{P}(\Sigma_{2})$ is a finite dimensional space, convergence in the
weak topology is the same as ordinary convergence as elements of a subset of a
Euclidean space. We also consider $\mathcal{P}(\mathcal{S}^{2})$ with a metric
that is consistent with weak convergence and under which it is a Polish space.
In the statement of the proposition $\mathcal{N}_{a}(w^{\ast})$ denotes the
open neighborhood about $w^{\ast}$ of radius $a$ in $\mathcal{P}(\Sigma_{2})$,
and similarly for $\mathcal{N}_{\epsilon}(\mu)\subset\mathcal{P}%
(\mathcal{S}^{2})$.

\begin{proposition}
\label{prop:convTemp} Let $w^{\ast}=(1/2,1/2)$. Then for each $a>0$ there is
an $\epsilon>0$ such that
\[
P\left(  \eta_{T}^{\infty}\in\mathcal{N}_{\epsilon}(\mu)|\rho_{T}%
\in(\mathcal{N}_{a}(w^{\ast}))^{c}\right)  \rightarrow0\ \text{as
}T\rightarrow\infty.
\]

\end{proposition}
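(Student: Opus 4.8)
The plan is to use the joint large deviation principle of Proposition~\ref{prop:jointLDP} together with an elementary conditional-probability estimate. Write $A_T = \{\eta_T^\infty \in \mathcal{N}_\epsilon(\mu)\}$ and $B_T = \{\rho_T \in (\mathcal{N}_a(w^\ast))^c\}$. Since $P(A_T \mid B_T) = P(A_T \cap B_T)/P(B_T)$, and the LDP gives upper and lower bounds, we have for every $\delta>0$ and all $T$ large
\[
\frac{1}{T}\log P(A_T \mid B_T) \le -\inf_{(\gamma,w) \in \overline{A}\cap \overline{B}} I(\gamma,w) + \inf_{(\gamma,w)\in B^\circ} I(\gamma,w) + \delta,
\]
where $A = \mathcal{N}_\epsilon(\mu)\times \mathcal{P}(\Sigma_K)$, $B = \mathcal{P}(\mathcal{S}^K)\times (\mathcal{N}_a(w^\ast))^c$, using the LDP upper bound on the closed set $\overline{A\cap B}$ for the numerator and the LDP lower bound on the (relatively) open set $B^\circ$ for the denominator. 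So it suffices to show that for a suitable choice of $\epsilon = \epsilon(a) > 0$,
\[
\inf\{ I(\gamma,w) : \gamma \in \overline{\mathcal{N}_\epsilon(\mu)},\ w \in (\mathcal{N}_a(w^\ast))^c\} \;>\; \inf\{ I(\gamma,w) : w \in (\mathcal{N}_a(w^\ast))^c\},
\]
with a strict gap; then the conditional probability decays exponentially and in particular tends to $0$.

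The key structural input is that $I(\mu, w) = +\infty$ for every $w \neq w^\ast$. Indeed, by the definition of $I$ in Proposition~\ref{prop:jointLDP}, $I(\mu,w)$ is an infimum of $J(\nu)$ over $\nu$ with $M\nu = \mu$ and $\langle \rho^\sigma,\nu\rangle = w_\sigma$. By Proposition~\ref{prop:symMeasure}, the only $\nu$ with $M\nu = \mu$ that can possibly minimize $J$ is $\nu_{sym}$ given by \eqref{eq:symNu}; applied with $\gamma = \mu$ and using $\mu = M\bar\mu$ together with the weighted symmetry \eqref{eqn:sym}, one computes $\nu_{sym} = \bar\mu$, and then $\langle \rho^\sigma,\bar\mu\rangle = 1/K!$ for every $\sigma$ exactly as in the computation of $E[\rho((\mathbf{Y}^\infty)^\sigma)]$ carried out in Section~\ref{sec:LD}. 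So $\bar\mu$ forces $w = w^\ast$; any other $\nu$ with $M\nu = \mu$ has strictly larger $J$, but in fact I claim $\nu_{sym}$ is the \emph{only} $\nu$ with $M\nu = \mu$ and $\nu \ll \bar\mu$ at all --- this needs a short argument: if $M\nu = \mu$ then $[\,d(M\nu)/d\bar\mu\,] = [\,d\mu/d\bar\mu\,]$, and since $M$ fixes the sum of $\nu$-mass over each permutation orbit while rescaling by $\rho$, and $\mu/\bar\mu$ is constant along orbits (again by \eqref{eqn:sym}), one gets that the orbit-sums of $\nu$ are pinned down; the minimizer of $J$ among measures with prescribed orbit-sums is symmetric by the convexity argument in Proposition~\ref{prop:symMeasure}, hence equals $\nu_{sym} = \bar\mu$, and for any \emph{non}-symmetric $\nu$ with the same orbit sums, $J(\nu) > J(\bar\mu) = 0$. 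Either way, $I_1(\mu) = 0$ is attained uniquely at $\gamma = \mu$ with witness $\bar\mu$, whose temperature marginal is $w^\ast$; consequently $I(\mu,w) = +\infty$ for $w \neq w^\ast$.

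From here the gap follows by a standard compactness-and-lower-semicontinuity argument. The rate function $I$ is lower semicontinuous (as a contraction of the l.s.c.\ $J$ --- strictly convex by Lemma~\ref{lem:strict_convex} in the appendix) on the compact space $\mathcal{P}(\mathcal{S}^K)\times \mathcal{P}(\Sigma_K)$. Define
\[
m(\epsilon) \doteq \inf\{ I(\gamma,w) : \gamma \in \overline{\mathcal{N}_\epsilon(\mu)},\ w \in (\mathcal{N}_a(w^\ast))^c\}.
\]
The sets over which we minimize shrink as $\epsilon \downarrow 0$, so $m(\epsilon)$ is nondecreasing, and $\lim_{\epsilon\downarrow 0} m(\epsilon) = \inf\{ I(\mu,w) : w \in (\mathcal{N}_a(w^\ast))^c\}$: this equality uses compactness of $(\mathcal{N}_a(w^\ast))^c$, lower semicontinuity of $I$, and the fact that any sequence $(\gamma_n,w_n)$ with $\gamma_n \to \mu$ and $w_n$ in the closed set $(\mathcal{N}_a(w^\ast))^c$ has a subsequential limit $(\mu,w_\infty)$ with $w_\infty \in (\mathcal{N}_a(w^\ast))^c$, and $\liminf I(\gamma_n,w_n)\ge I(\mu,w_\infty)$. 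Since $I(\mu,w_\infty) = +\infty$ for every such $w_\infty \ne w^\ast$, the limiting infimum is $+\infty$; hence $m(\epsilon) \to +\infty$, and in particular there exists $\epsilon>0$ with $m(\epsilon)$ strictly exceeding the finite quantity $\inf\{I_2(w) : w \in (\mathcal{N}_a(w^\ast))^c\} = \inf\{ I(\gamma,w): w \in B^\circ\}$ (finiteness of the latter holds because, e.g., any $\nu$ supported so that its orbit sums realize a given $w$ near the boundary gives finite $J$; if it happened to be $+\infty$ the claim is trivial since then $P(B_T)$ already decays faster). This is the $\epsilon$ in the statement.

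\medskip
\noindent\textbf{Main obstacle.} The genuinely delicate point is verifying $I(\mu,w) = +\infty$ for $w \ne w^\ast$ --- equivalently, that the \emph{only} $\nu \ll \bar\mu$ with $M\nu = \mu$ is $\bar\mu$ itself (so its temperature marginal is forced to be uniform). Everything else is bookkeeping with the contraction principle and a routine l.s.c./compactness limit. The subtlety is that $M$ is far from injective --- it only sees orbit-sums --- so one must argue carefully that constraining $M\nu = \mu$ plus $\nu \ll \bar\mu$ pins the orbit-sums of $\nu$ to those of $\bar\mu$, and then that among such $\nu$ the symmetric representative $\bar\mu$ is the unique $J$-minimizer while its marginal is $w^\ast$; the weighted symmetry identity \eqref{eqn:sym} and the constancy of $\mu/\bar\mu$ along permutation orbits are what make this work, and Proposition~\ref{prop:symMeasure} supplies the uniqueness of the symmetric minimizer.
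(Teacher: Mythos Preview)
Your overall LDP-plus-compactness skeleton is the same as the paper's, but the crucial structural claim you rely on is false, and the paper takes a genuinely different route at exactly that point.

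You assert that $I(\mu,w)=+\infty$ for every $w\neq w^\ast$, arguing that ``the only $\nu\ll\bar\mu$ with $M\nu=\mu$ is $\bar\mu$ itself.'' This is not true. The constraint $M\nu=\mu$ only pins down the orbit-sums of $\nu$: for $K=2$ it says $\nu(x_1,x_2)+\nu(x_2,x_1)=2\bar\mu(x_1,x_2)$ for every pair, nothing more. Within each orbit you are free to redistribute mass. Concretely, pick any off-diagonal $(x_1,x_2)$ with $\rho(x_1,x_2)\neq 1/2$, set $\nu=\bar\mu$ except that you move mass $t>0$ from $(x_2,x_1)$ to $(x_1,x_2)$. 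Then $M\nu=\mu$ still holds, $\nu$ has full support so $J(\nu)<\infty$, and $\langle\rho,\nu\rangle=1/2+t(2\rho(x_1,x_2)-1)\neq 1/2$. Hence $I(\mu,w)<\infty$ for $w$ near but not equal to $w^\ast$. Your ``either way'' sentence conflates ``$\bar\mu$ is the unique $J$-minimizer among $\nu$ with $M\nu=\mu$'' (true, and this is Proposition~\ref{prop:symMeasure}) with ``$\bar\mu$ is the only such $\nu$ with finite $J$'' (false). Consequently $m(\epsilon)$ does not tend to $+\infty$; it tends to $\inf\{I(\mu,w):w\in(\mathcal N_a(w^\ast))^c\}$, which is finite for small $a$, and you still need to show this strictly exceeds $R(a)$.

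That strict inequality is exactly what the paper isolates as Lemma~\ref{lemma:minPT}: for each $\bar w\neq w^\ast$ the infimum of $I(\gamma,\bar w)$ over $\gamma$ is \emph{uniquely} attained at some $\bar\gamma\neq\mu$, hence $I(\mu,\bar w)>\inf_\gamma I(\gamma,\bar w)\geq R(a)$. Proving this is the real work and is not a consequence of Proposition~\ref{prop:symMeasure} or the contraction principle alone. The paper introduces a Lagrange multiplier to remove the $w$-constraint, converts the resulting variational problem to the ergodic control problem of Section~\ref{sec:controlProblem} via Lemma~\ref{lemma:equivOpt}, and then uses the Bellman equation together with the strict cost bound of Lemma~\ref{lem:cost_assym} to derive a contradiction from the hypothesis $M\bar\nu=\mu$. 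Your proposal needs to supply an argument of this kind (or an alternative) to close the gap; the compactness step you wrote is fine once Lemma~\ref{lemma:minPT} is available.
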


\begin{remark}
\label{rem:diagnostic}Although the proof of Proposition \ref{prop:convTemp} is
given for the case of two temperatures, the analogous result for any finite
number of temperatures holds, though the notation needed for the proof is more
complicated. In this more general setting, it is worth noting that any
functional of $\rho_{T}$ must also converge to its asymptotic counterpart
before $\eta_{T}^{\infty}$ can converge to $\mu$. Thus one can consider
diagnostics that are based on lower dimensional quantities. For example, in
place of the $K!$-dimensional object $\rho_{T}=\{\rho_{T}^{\sigma}%
\}_{\sigma\in\Sigma_{K}}$, one could use the $K$-dimensional object defined
by
\[
\lbrack\beta_{T}]_{k}\doteq\sum_{\sigma\in\Sigma_{K}:\sigma(1)=k}\rho
_{T}^{\sigma},\quad k=1,\ldots,K.
\]
This quantity can be interpreted as follows, using the convention for particle
and temperature associations when using INS that was introduced previously.
Let each particle be associated with the temperature (dynamic) it was assigned
at time zero. Then $[\beta_{T}]_{k}$ is the fraction of time the particle
initially assigned temperature $1$ uses temperature $k$ in $[0,T]$. Since
$\rho_{T}^{\sigma}\rightarrow1/K!$ and particles are exchangeable, $[\beta
_{T}]_{k}\rightarrow1/K$ as $T\rightarrow\infty$. It is in fact this
diagnostic that has been used in previous numerical studies such as
\cite{dolplafreliudup, doldup}.
\end{remark}

To prove Proposition \ref{prop:convTemp} we first study the probability
measure that minimizes the large deviation rate $I$ for any fixed $\bar{w}%
\in\mathcal{P}(\Sigma_{K})$, with $\bar{w}$ not the uniform distribution.

\begin{lemma}
\label{lemma:minPT} For any $\bar{w}\in\mathcal{P}(\Sigma_{2})$ not equal to
the uniform distribution, the infimum of $I(\gamma,\bar{w})$ over $\gamma$ is
uniquely attained at some $\bar{\gamma}\neq\mu$.
\end{lemma}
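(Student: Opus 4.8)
The plan is to first reduce the statement to a constrained minimization of $J$. Since $M$ maps $\mathcal{P}(\mathcal{S}^{K})$ into itself, for every $\nu$ the pair $(M\nu,(\langle\rho^{\sigma},\nu\rangle)_{\sigma})$ is admissible in the definition of $I$, so $\inf_{\gamma}I(\gamma,\bar w)$ equals $I_{2}(\bar w)$ from \eqref{eq:I2}, i.e.\ the infimum of $J$ over $\mathcal{K}_{\bar w}\doteq\{\nu\in\mathcal{P}(\mathcal{S}^{2}):\langle\rho^{\sigma},\nu\rangle=\bar w_{\sigma},\ \sigma\in\Sigma_{2}\}$. This set is compact and convex, and (since $\rho^{\sigma_{1}}+\rho^{\sigma_{2}}=\mathbf 1$) nonempty precisely when $\bar w_{1}$ lies in the convex hull of the values of $\rho$; that interval contains a neighborhood of $w^{\ast}$ whenever $\rho\not\equiv\tfrac12$, which is all that is needed in Proposition \ref{prop:convTemp} (if $\mathcal{K}_{\bar w}=\emptyset$ then $I(\cdot,\bar w)\equiv\infty$ and there is nothing to prove -- this also disposes of the degenerate case $\mu_{1}=\mu_{2}$, where $\rho\equiv\tfrac12$). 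By strict convexity of $J$ (Lemma \ref{lem:strict_convex}) and lower semicontinuity, $J$ attains its infimum over $\mathcal{K}_{\bar w}$ at a unique $\nu^{\ast}$, and I would set $\bar\gamma\doteq M\nu^{\ast}$; uniqueness of $\nu^{\ast}$ then gives uniqueness of $\bar\gamma$ as the minimizer of $\gamma\mapsto I(\gamma,\bar w)$, since any competitor $\gamma'=M\nu'$ with $\nu'\in\mathcal{K}_{\bar w}$ and $J(\nu')=J(\nu^{\ast})$ must have $\nu'=\nu^{\ast}$.

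The substance of the lemma is $\bar\gamma\neq\mu$, and I would argue this by contradiction. Suppose $M\nu^{\ast}=\mu=M\bar\mu$. Because, by \eqref{eq:mapM2}, $M\nu$ depends only on the symmetrized masses $\nu(\mathbf x)+\nu(\mathbf x^{\mathrm{swap}})$, this forces $\nu^{\ast}=\bar\mu+\psi$ with $\psi$ antisymmetric ($\psi(\mathbf x^{\mathrm{swap}})=-\psi(\mathbf x)$), and $\psi\neq 0$ because $\langle\rho^{\sigma_{1}},\bar\mu\rangle=\tfrac12$ while $\langle\rho^{\sigma_{1}},\nu^{\ast}\rangle=\bar w_{1}\neq\tfrac12$. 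Moreover, since $\Gamma^{1},\Gamma^{2}$ are ergodic and $\rho>0$, the chain $\Gamma^{\infty}$ is irreducible, so $\nu^{\ast}$ is strictly positive on $\mathcal{S}^{2}$ (a null state adjacent via $\Gamma^{\infty}$ to a charged one would make $\partial J/\partial\nu(\mathbf x)=-\infty$ there, contradicting minimality); hence $\nu^{\ast}$ lies in the relative interior of $\mathcal{K}_{\bar w}$.

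Now the first-order optimality of $\nu^{\ast}$ over $\mathcal{K}_{\bar w}$ reads $\nabla J(\nu^{\ast})\in\operatorname{span}\{\rho^{\sigma_{1}},\mathbf 1\}$, and using the explicit form of $J$ together with the $\bar\mu$-reversibility of $\Gamma^{\infty}$ this says exactly that $\sqrt{\nu^{\ast}}$ is the principal eigenvector of the symmetric operator obtained from (minus) $\Gamma^{\infty}$ tilted by the potential $\alpha(\rho-\tfrac12)$, for a scalar $\alpha\neq 0$ (at $\psi=0$ the condition collapses to $\nabla J(\bar\mu)=0$, valid because $\bar\mu$ is the unconstrained minimizer of $J$). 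Applying the swap symmetry of Lemma \ref{lemma:symmetry} turns the same identity into the analogous one for $\sqrt{(\nu^{\ast})^{\mathrm{swap}}}=\sqrt{2\bar\mu-\nu^{\ast}}$. The plan is then to combine these two tilted eigen-equations and exploit that the principal eigenvalue of the tilt is a strictly convex function of $\alpha$ -- its first $\alpha$-derivative at $0$ vanishes because $\rho-\tfrac12$ is $\bar\mu$-mean-zero, and its second is strictly positive because $\rho-\tfrac12\not\equiv 0$ (this is where the exclusion of $\mu_{1}=\mu_{2}$ enters) -- so that a perturbation expansion of the eigenvector is incompatible with $\nu^{\ast}\in\mathcal{M}(\mu)$ unless $\psi=0$, contradicting $\psi\neq 0$.

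The main obstacle is precisely this last contradiction: one cannot simply move $\nu^{\ast}$ toward the unconstrained minimizer $\bar\mu$, since $\bar\mu\notin\mathcal{K}_{\bar w}$, so the argument must go through the structure of $\nabla J$ and the swap symmetry as above. An equivalent route uses Section \ref{sec:controlProblem}: $\nu^{\ast}$ is the invariant measure $\bar\mu\,e^{-2W}$ of the process optimally controlled for the running cost $h=-\mu_{0}\rho$ (with $\mu_{0}\neq 0$ the multiplier of the active constraint), and $M\nu^{\ast}=\mu$ is equivalent to $e^{-2W(\mathbf x)}+e^{-2W(\mathbf x^{\mathrm{swap}})}=2$ for all $\mathbf x$; one then checks that the Bellman equation \eqref{eqn:stat_DPE} with this cost admits no solution with $W$ nonconstant. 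The remaining ingredients -- the reduction, existence and uniqueness from strict convexity, and the $K>2$ bookkeeping (with $\rho^{\sigma_{1}}$ replaced by the family $\{\rho^{\sigma}\}_{\sigma\in\Sigma_{K}}$ and ``symmetric'' by ``$\Sigma_{K}$-invariant'') -- are routine.
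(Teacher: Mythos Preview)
Your reduction to the constrained minimization of $J$ over $\mathcal{K}_{\bar w}$, the existence/uniqueness via strict convexity, and the identification of $M\nu^{\ast}=\mu$ with the pointwise condition $e^{-2W(\mathbf x)}+e^{-2W(\mathbf x^{R})}=2$ are all correct and line up with the paper. You also recognize the ergodic-control/Bellman route of Section~\ref{sec:controlProblem} as the natural vehicle.

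The gap is the contradiction step itself. Your principal-eigenvalue argument is not carried through: strict convexity of $\alpha\mapsto\Lambda(\alpha)$ is a scalar statement, and you do not explain how it forces the \emph{pointwise} relation $\phi_{\alpha}^{2}+\phi_{-\alpha}^{2}=2$ to fail; note that strict convexity of $J$ applied to $(\nu_{\alpha}+\nu_{-\alpha})/2=\bar\mu$ only yields $J(\nu_{\alpha})>0$, which is already known. A perturbation expansion would at best handle small $\alpha$, whereas the multiplier attached to a given $\bar w$ need not be small. When you switch to the Bellman route you write ``one then checks that the Bellman equation admits no solution with $W$ nonconstant'' --- but the Bellman equation \emph{does} have a solution, and that solution \emph{is} nonconstant (since $\rho\not\equiv\tfrac12$); what must be ruled out is a solution that \emph{also} satisfies the symmetry constraint, and you give no mechanism for this.

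The paper supplies that mechanism. First (Lemma~\ref{lem:cost_assym}) the optimal ergodic cost satisfies $\gamma<\lambda/2$, because actively steering toward states with smaller $\rho$ strictly beats the uncontrolled cost $\lambda/2$. Then one evaluates the Bellman equation \eqref{eqn:Bellman} at \emph{diagonal} states $\mathbf x\in\mathcal D$: there the symmetry constraint forces $W(\mathbf x)=0$, one has $\rho(\mathbf x)=\tfrac12$, and $r(\mathbf x,\mathbf y)=r(\mathbf x,\mathbf y^{R})$ by Lemma~\ref{lemma:symmetry}, so the equation collapses to
\[
\sum_{\mathbf y\in\mathcal A}r(\mathbf x,\mathbf y)\bigl(2-e^{-W(\mathbf y)}-e^{-W(\mathbf y^{R})}\bigr)=\gamma-\tfrac{\lambda}{2}<0.
\]
Hence some neighbor $\mathbf y$ has $e^{-W(\mathbf y)}+e^{-W(\mathbf y^{R})}>2$. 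But the assumed constraint $e^{-2W(\mathbf y)}+e^{-2W(\mathbf y^{R})}=2$ together with Cauchy--Schwarz (Lemma~\ref{lemma:ineqCont}) gives $e^{-W(\mathbf y)}+e^{-W(\mathbf y^{R})}\leq 2$, a contradiction. The localization to diagonal states and the strict inequality $\gamma<\lambda/2$ that drives it are the concrete ideas your proposal is missing.
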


The proof will use the ergodic control problem of the previous section. In
this section we give the proof for $K=2$, and outline the proof for general
$K$ in Remark \ref{rmk:multtemp} in the appendix. The following lemma ensures
that we can switch our focus from the unconstrained version of the
optimization problem of Lemma \ref{lemma:minPT} to a related ergodic control
problem, and that there is a correspondence between the minimizers in the two
settings. Recall the solution $(\gamma,W)$ to the Bellman equation
\eqref{eq:HJeq2} has the property that $\gamma$ is unique and $W$ is unique up
to an additive constant (see, e.g., \cite[Chapter 7]{kusdup1}).

\begin{lemma}
\label{lemma:equivOpt}Consider the optimization problem
\begin{equation}
\inf_{\nu\in\mathcal{P}(\mathcal{S}^{2})}\left\{  J(\nu)+\sum_{\mathbf{x}%
\in\mathcal{S}^{2}}h(\mathbf{x})\nu(\mathbf{x})\right\}  , \label{eq:optEq}%
\end{equation}
and also the static Bellman equation given in \eqref{eq:HJeq2} in Section
\ref{sec:controlProblem}:
\[
0=\inf_{u\in U}\left\{  \sum_{\mathbf{y}\in\mathcal{S}^{2}}\left(
u(\mathbf{y})\left[  W(\mathbf{y})-W(\mathbf{x})\right]  +r(\mathbf{x}%
,\mathbf{y})\ell\left(  \frac{u(\mathbf{y})}{r(\mathbf{x},\mathbf{y})}\right)
\right)  -\gamma+h(\mathbf{x})\right\}  .
\]
Consider any solution $(\gamma,W)$ to the Bellman equation, with $W$ taken to
be normalized in the sense that
\begin{equation}
\bar{\nu}(\mathbf{x})=\bar{\mu}(\mathbf{x})e^{-2W(\mathbf{x})}
\label{eqn:eqfornubar}%
\end{equation}
is a probability measure on $\mathbf{S}^{2}$. Then $\bar{\nu}$ is a minimizer
in \eqref{eq:optEq}. Conversely, consider a minimizing measure $\nu^{\ast}$ in
\eqref{eq:optEq}. Then a solution $(\gamma^{\ast},W^{\ast})$ to the Bellman
equation is given by
\[
W^{\ast}(\mathbf{x})=-\log\left[  \frac{d\nu^{\ast}}{d\bar{\mu}}\right]
^{1/2}(\mathbf{x}),\ \gamma^{\ast}=J(\nu^{\ast})+\sum_{\mathbf{x}%
\in\mathcal{S}^{2}}h(\mathbf{x})\nu^{\ast}(\mathbf{x}),
\]
and by uniqueness $(\gamma^{\ast},W^{\ast})=(\gamma,W)$.
\end{lemma}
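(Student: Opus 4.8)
The plan is to bridge the variational problem \eqref{eq:optEq} and the ergodic control value $\gamma$ from two sides. On the analytic side, I would plug the candidate $\bar\nu(\mathbf{x})=\bar\mu(\mathbf{x})e^{-2W(\mathbf{x})}$ from \eqref{eqn:eqfornubar} directly into $J$. The key observation is that $e^{-W(\mathbf{x})}=[d\bar\nu/d\bar\mu]^{1/2}(\mathbf{x})$, so that $\bar\nu(\mathbf{x})\,e^{-(W(\mathbf{z})-W(\mathbf{x}))}=\bar\mu(\mathbf{x})[d\bar\nu/d\bar\mu]^{1/2}(\mathbf{x})[d\bar\nu/d\bar\mu]^{1/2}(\mathbf{z})$ and $\bar\nu(\mathbf{x})r(\mathbf{x},\mathbf{z})=\bar\mu(\mathbf{x})[d\bar\nu/d\bar\mu](\mathbf{x})\Gamma^{\infty}_{\mathbf{x},\mathbf{z}}$. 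Multiplying the static dynamic programming equation \eqref{eqn:stat_DPE} at $\mathbf{x}$ by $\bar\nu(\mathbf{x})$, summing over $\mathbf{x}\in\mathcal{S}^2$, and using $\sum_{\mathbf{x}}\bar\nu(\mathbf{x})=1$ together with $\sum_{\mathbf{z}}r(\mathbf{x},\mathbf{z})=q^{\infty}(\mathbf{x})$, a short and routine computation yields $\gamma=J(\bar\nu)+\sum_{\mathbf{x}}h(\mathbf{x})\bar\nu(\mathbf{x})$; that is, the objective in \eqref{eq:optEq} evaluated at $\bar\nu$ equals $\gamma$. Here $\bar\nu$ is a legitimate element of $\mathcal{P}(\mathcal{S}^2)$ by the assumed normalization, and every measure on the finite space is absolutely continuous with respect to $\bar\mu$, so the infimum in \eqref{eq:optEq} is over measures on which $J$ is finite.

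On the other side, I would identify $\gamma$ with the infimum in \eqref{eq:optEq}. Taking $F=\sum_{\mathbf{x}}h(\mathbf{x})\nu_T(\mathbf{x})$, the control representation \eqref{eq:controlRep} together with Proposition \ref{prop:Verification} and the linear-ODE argument that follows it give $W^T(0,\mathbf{y}_0)=-\log E[e^{-TF(\mathbf{Y}^{\infty})}]$. Dividing by $T$ and letting $T\to\infty$: the left-hand side converges to $\gamma$ by the ergodic control facts recalled in Section \ref{sec:controlProblem}, while the right-hand side $-\tfrac1T\log E[e^{-TF(\mathbf{Y}^{\infty})}]$ converges to $\inf_{\nu\in\mathcal{P}(\mathcal{S}^2)}\{J(\nu)+\sum_{\mathbf{x}}h(\mathbf{x})\nu(\mathbf{x})\}$ by Varadhan's lemma applied to the large deviation principle for $\{\nu_T\}$ (legitimate since $\nu\mapsto\sum_{\mathbf{x}}h(\mathbf{x})\nu(\mathbf{x})$ is bounded and continuous on $\mathcal{P}(\mathcal{S}^2)$). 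Hence $\gamma=\inf_\nu\{J(\nu)+\sum_{\mathbf{x}}h(\mathbf{x})\nu(\mathbf{x})\}$, and combined with the preceding paragraph this shows that $\bar\nu$ attains the infimum, i.e.\ $\bar\nu$ is a minimizer of \eqref{eq:optEq}.

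Uniqueness and the converse then follow quickly. The objective $\nu\mapsto J(\nu)+\sum_{\mathbf{x}}h(\mathbf{x})\nu(\mathbf{x})$ is strictly convex on $\mathcal{P}(\mathcal{S}^2)$, since $J$ is strictly convex by Lemma \ref{lem:strict_convex} and $\nu\mapsto\sum_{\mathbf{x}}h(\mathbf{x})\nu(\mathbf{x})$ is linear, so its minimizer is unique; hence any minimizer $\nu^{\ast}$ of \eqref{eq:optEq} must equal $\bar\nu=\bar\mu e^{-2W}$. Consequently $[d\nu^{\ast}/d\bar\mu]^{1/2}(\mathbf{x})=e^{-W(\mathbf{x})}$, so $W^{\ast}(\mathbf{x})=-\log[d\nu^{\ast}/d\bar\mu]^{1/2}(\mathbf{x})$ coincides with $W$, and $\gamma^{\ast}=J(\nu^{\ast})+\sum_{\mathbf{x}}h(\mathbf{x})\nu^{\ast}(\mathbf{x})$ equals $\gamma$ by the first paragraph; since $W^{\ast}$ is automatically normalized so that $\bar\mu e^{-2W^{\ast}}=\nu^{\ast}$ is a probability measure, $(\gamma^{\ast},W^{\ast})$ solves the Bellman equation \eqref{eq:HJeq2}, and by the uniqueness recalled just before the lemma it equals $(\gamma,W)$.

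I expect the main obstacle to be making the identification $\gamma=\inf_\nu\{J(\nu)+\sum_{\mathbf{x}}h(\mathbf{x})\nu(\mathbf{x})\}$ fully rigorous: it requires assembling the finite-horizon control representation, the convergence $W^T(0,\cdot)/T\to\gamma$ of the ergodic control problem, and Varadhan's lemma for the empirical measure, and checking that the functional appearing in \eqref{eq:controlRep} is exactly $\nu_T\mapsto\sum_{\mathbf{x}}h(\mathbf{x})\nu_T(\mathbf{x})$. Each of these is standard and has been set up earlier in the paper. As a fully self-contained alternative that avoids Varadhan, one can instead use reversibility of $\Gamma^{\infty}$ with respect to $\bar\mu$ (established in Section \ref{sec:LD}) to rewrite $J(\nu)+\sum_{\mathbf{x}}h(\mathbf{x})\nu(\mathbf{x})$, with $\phi=\sqrt{\nu}$, as a symmetric quadratic form $\sum_{\mathbf{x},\mathbf{z}}\phi(\mathbf{x})(D-B)_{\mathbf{x},\mathbf{z}}\phi(\mathbf{z})$, where $D$ is diagonal with entries $q^{\infty}(\mathbf{x})+h(\mathbf{x})$ and $B_{\mathbf{x},\mathbf{z}}=\Gamma^{\infty}_{\mathbf{x},\mathbf{z}}(\bar\mu(\mathbf{x})/\bar\mu(\mathbf{z}))^{1/2}$ is symmetric, nonnegative off the diagonal, and irreducible under the standing ergodicity assumption; minimizing over $\mathcal{P}(\mathcal{S}^2)$ is then minimizing a Rayleigh quotient over nonnegative unit vectors, which by the Perron--Frobenius theorem has the unique solution given by the strictly positive principal eigenvector, and the eigenvalue relation for that eigenvector is exactly \eqref{eqn:stat_DPE} after setting $\phi(\mathbf{x})=\bar\mu(\mathbf{x})^{1/2}e^{-W(\mathbf{x})}$ and $\gamma$ equal to the corresponding smallest eigenvalue.
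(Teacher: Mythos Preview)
Your proof is correct, and the first step---multiplying \eqref{eqn:stat_DPE} by $\bar\nu(\mathbf{x})$ and summing to obtain $\gamma=J(\bar\nu)+\sum_{\mathbf{x}}h(\mathbf{x})\bar\nu(\mathbf{x})$---is exactly what the paper does to get the upper bound $\gamma^{\ast}\leq\gamma$. The main difference is in the reverse direction. The paper works entirely inside the control framework: it takes a minimizer $\nu^{\ast}$ of \eqref{eq:optEq}, shows by a direct first-order-condition and communicating-class argument that $\theta^{\ast}=d\nu^{\ast}/d\bar\mu$ is strictly positive (so that $W^{\ast}=-\log(\theta^{\ast})^{1/2}$ is well defined), builds the feedback control $u^{\ast}$ from $W^{\ast}$, and then checks that the average-cost-per-unit-time of this control equals $\gamma^{\ast}$, whence $\gamma\leq\gamma^{\ast}$ since $\gamma$ is the optimal cost. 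You instead assemble three global facts---the representation \eqref{eq:controlRep}, the ergodic limit $W^{T}(0,\cdot)/T\to\gamma$, and Varadhan's lemma for $\{\nu_T\}$---to get the equality $\gamma=\inf_\nu\{J(\nu)+\sum_{\mathbf{x}}h(\mathbf{x})\nu(\mathbf{x})\}$ in one stroke, and then deduce $\nu^{\ast}=\bar\nu$ from strict convexity. This is clean and avoids the positivity argument for $\theta^{\ast}$ (it comes for free from $\bar\nu=\bar\mu e^{-2W}>0$), at the cost of invoking the Laplace principle, which the paper only calls on later in the proof of Lemma~\ref{lemma:minPT}. Your Perron--Frobenius alternative is a genuinely distinct third route that neither version uses, and it makes transparent why the minimizer is strictly positive and why \eqref{eqn:stat_DPE} is exactly the eigenvalue relation; it is arguably the most elementary of the three.
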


\begin{proof}
We start by showing that an averaged version of the static Bellman equation
gives an upper bound for the optimization problem. Suppose $(\gamma,W)$ is a
solution to the Bellman equation and define the measure $\bar{\nu}$ by
(\ref{eqn:eqfornubar}). The Bellman equation holds with equality for all
$\mathbf{x}$, and averaging with respect to $\bar{\nu}$ gives
\begin{equation}
\gamma=\sum_{\mathbf{x},\mathbf{y}\in\mathcal{S}^{2}}r(\mathbf{x}%
,\mathbf{y})\left[  1-e^{-(W(\mathbf{y})-W(\mathbf{x}))}\right]  \bar{\mu
}(\mathbf{x})e^{-2W(\mathbf{x})}+\sum_{\mathbf{x}\in\mathcal{S}^{2}%
}h(\mathbf{x})\bar{\mu}(\mathbf{x})e^{-2W(\mathbf{x})}. \label{eqn:BellAve}%
\end{equation}
Define $\theta(\mathbf{x})=e^{-2W(\mathbf{x})}$, the likelihood ratio of
$\bar{\nu}$ and $\bar{\mu}$, and consider the rate function $J$ evaluated at
$\bar{\nu}$:
\begin{align*}
J(\bar{\nu})  &  =\sum_{\mathbf{x},\mathbf{y}\in\mathcal{S}^{2}}\left(
\theta(\mathbf{x})-\theta^{1/2}(\mathbf{x})\theta^{1/2}(\mathbf{y})\right)
r(\mathbf{x},\mathbf{y})\bar{\mu}(\mathbf{x})\\
&  =\sum_{\mathbf{x},\mathbf{y}\in\mathcal{S}^{2}}\left(  e^{-2W(\mathbf{x}%
)}-e^{-W(\mathbf{x})-W(\mathbf{y})}\right)  r(\mathbf{x},\mathbf{y})\bar{\mu
}(\mathbf{x})\\
&  =\sum_{\mathbf{x},\mathbf{y}\in\mathcal{S}^{2}}r(\mathbf{x},\mathbf{y}%
)\left(  1-e^{-(W(\mathbf{y})-W(\mathbf{x}))}\right)  \bar{\mu}(\mathbf{x}%
)e^{-2W(\mathbf{x})}.
\end{align*}
Then using (\ref{eqn:BellAve}) for the last equality,
\begin{align*}
&  \inf_{\nu\in\mathcal{P}(\mathcal{S}^{2})}\left\{  J(\nu)+\sum
_{\mathbf{x}\in\mathcal{S}^{2}}h(\mathbf{x})\nu(\mathbf{x})\right\} \\
&  \quad\leq\sum_{\mathbf{x},\mathbf{y}\in\mathcal{S}^{2}}r(\mathbf{x}%
,\mathbf{y})\left(  1-e^{-(W(\mathbf{y})-W(\mathbf{x}))}\right)  \bar{\mu
}(\mathbf{x})e^{-2W(\mathbf{x})}+\sum_{\mathbf{x}\in\mathcal{S}^{2}%
}h(\mathbf{x})e^{-2W(\mathbf{x})}\bar{\mu}(\mathbf{x})\\
&  \quad=\gamma.
\end{align*}

Next define $\gamma^{\ast}$ to be the minimal value
\[
\gamma^{\ast}=\inf_{\nu\in\mathcal{P}(\mathcal{S}^{2})}\left\{  J(\nu
)+\sum_{\mathbf{x}\in\mathcal{S}^{2}}h(\mathbf{x})\nu(\mathbf{x})\right\}  .
\]
From the previous display $\gamma^{\ast}\leq\gamma$, and we now proceed to
show the reverse inequality.

Let $\nu^{\ast}$ denote a minimizing measure, i.e.,
\[
\nu^{\ast}=\argmin_{\nu\in\mathcal{P}(\mathcal{S}^{2})}\left\{  J(\nu
)+\sum_{\mathbf{x}\in\mathcal{S}^{2}}h(\mathbf{x})\nu(\mathbf{x})\right\}  .
\]
The existence of such a measure follows from the fact that $J$ has compact
level sets and the boundedness of $h$. Moreover strict convexity of $J$
implies it is unique. Define $\theta^{\ast}=\left[  d\nu^{\ast}/d\bar{\mu
}\right]  $. For any $\mathbf{x}\in\mathcal{S}^{2}$ we have $\theta^{\ast
}(\mathbf{x})\in(0,\infty)$. For the upper bound, note that
\[
\theta^{\ast}(\mathbf{x})\leq\frac{\max_{\mathbf{x}\in\mathcal{S}^{2}}%
\nu^{\ast}(\mathbf{x})}{\min_{\mathbf{x}\in\mathcal{S}^{2}}\bar{\mu
}(\mathbf{x})}<\infty.
\]
The second inequality is due to $\nu^{\ast}$ being a probability measure and
the fact that $\bar{\mu}$ has support $\mathcal{S}^{2}$, which implies that
$\bar{\mu}(\mathbf{x})>0$ for all $\mathbf{x}\in\mathcal{S}^{2}$. It follows
from the finiteness of $\mathcal{S}^{2}$ that $\theta^{\ast}$ is bounded from
above. Moreover, by differentiating the objective function it is not difficult
to check that the optimal choice $\theta^{\ast}$ will satisfy
\[
\theta^{\ast}(\mathbf{x})=\frac{1}{4(\sum_{\mathbf{y}\in\mathcal{S}^{2}%
}r(\mathbf{x},\mathbf{y})+h(\mathbf{x}))^{2}}\left(  \sum_{\mathbf{y}%
\in\mathcal{S}^{2}}r(\mathbf{x},\mathbf{y})\left(  \theta^{\ast}\right)
^{1/2}(\mathbf{y})\right)  ^{2},\ \mathbf{x}\in\mathcal{S}^{2}.
\]
Suppose that $\theta^{\ast}(\mathbf{x})$ is zero for at least one
$\mathbf{x}\in\mathcal{S}^{2}$. Since $\sum_{\mathbf{y}\in\mathcal{S}^{2}%
}r(\mathbf{x},\mathbf{y}) + h(\mathbf{x})\in(0,\infty)$ for all $\mathbf{x}$
for $\theta^{\ast}(\mathbf{x})$ to be zero it must hold that the sum in the
last display is zero. The underlying jump rates are such that $\mathcal{S}%
^{2}$ forms a communicating class and thus for each $\mathbf{x}$ there is at
least one $\mathbf{y}$ such that $r(\mathbf{x},\mathbf{y})>0$. It follows that
$\theta^{\ast}(\mathbf{x})=0$ requires $\theta^{\ast}(\mathbf{y})=0$ for all
$\mathbf{y}$ with which $\mathbf{x}$ communicates. Repeating this argument,
using that $\mathcal{S}^{2}$ is a communicating class under the original
dynamics, shows that if $\theta^{\ast}(\mathbf{x})=0$, then $\theta^{\ast
}\equiv0$. This is clearly a contradiction and it must hold that $\theta
^{\ast}(\mathbf{x})>0$ for all $\mathbf{x}$. Hence, $\theta^{\ast}
(\mathbf{x}) \in(0,\infty)$ for all $\mathbf{x}$.

Set
\[
W^{\ast}(\mathbf{x})=-\log(\theta^{\ast})^{1/2}(\mathbf{x}).
\]
Inserting the measure $\nu^{\ast}$ into the objective function and rewriting
it in terms of $W^{\ast}$,
\begin{align*}
\gamma^{\ast}  &  =J(\nu^{\ast})+\sum_{\mathbf{x}\in\mathcal{S}^{2}%
}h(\mathbf{x})\nu^{\ast}(\mathbf{x})\\
&  =\sum_{\mathbf{x},\mathbf{y}\in\mathcal{S}^{2}}\left(  e^{-2W^{\ast
}(\mathbf{x})}-e^{-W^{\ast}(\mathbf{y})}e^{-W^{\ast}(\mathbf{x})}\right)
r(\mathbf{x},\mathbf{y})\bar{\mu}(\mathbf{x})+\sum_{\mathbf{x}\in
\mathcal{S}^{2}}h(\mathbf{x})\theta^{\ast}(\mathbf{x})\bar{\mu}(\mathbf{x})\\
&  =\sum_{\mathbf{x},\mathbf{y}\in\mathcal{S}^{2}}r(\mathbf{x},\mathbf{y}%
)\left(  1-e^{-(W^{\ast}(\mathbf{y})-W^{\ast}(\mathbf{x}))}\right)  \bar{\mu
}(\mathbf{x})e^{-2W^{\ast}(\mathbf{x})}+\sum_{\mathbf{x}\in\mathcal{S}^{2}%
}h(\mathbf{x})\bar{\mu}(\mathbf{x})e^{-2W^{\ast}(\mathbf{x})}.
\end{align*}
This is the Bellman equation averaged with respect to $\bar{\mu}%
(\mathbf{x})e^{-2W^{\ast}(\mathbf{x})}$.

The cost (average cost per unit time) associated with the control $u^{\ast}$
is (see Section \ref{sec:controlProblem})
\[
\lim_{T\rightarrow\infty}E\left[  \frac{1}{T}\int_{0}^{T}\left(
\sum_{\boldsymbol{\mathbf{y}}\in\mathcal{S}^{2}}r(\bar{\mathbf{Y}}^{\infty
}(s),\boldsymbol{\mathbf{y}})\ell\left(  \frac{u^{\ast}(s,\bar{\mathbf{Y}%
}^{\infty}(s);\boldsymbol{\mathbf{y}})}{r(\bar{\mathbf{Y}}^{\infty
}(s),\boldsymbol{\mathbf{y}})}\right)  +h(\bar{\mathbf{Y}}^{\infty
}(s))\right)  ds\right]  .
\]
where the controlled process $\bar{\mathbf{Y}}^{\infty}$ has dynamics
according to the choice of control $u^{\ast}$. Using the same calculations as
in Section \ref{sec:controlProblem}, the invariant measure associated with
this process is precisely $\nu^{\ast}$ and by ergodicity the limit in the last
display is the average of the cost with respect to $\nu^{\ast}$:
\begin{align*}
&  \sum_{\mathbf{x},\mathbf{y}\in\mathcal{S}^{2}}\left(  r(\mathbf{x}%
,\mathbf{y})\ell\left(  \frac{u^{\ast}(\mathbf{y})}{r(\mathbf{x},\mathbf{y}%
)}\right)  +h(\mathbf{x})\right)  \nu^{\ast}(\mathbf{x})\\
&  \quad=\sum_{\mathbf{x},\mathbf{y}\in\mathcal{S}^{2}}\left(  r(\mathbf{x}%
,\mathbf{y})\left[  1-e^{-W^{\ast}(\mathbf{y})+W^{\ast}(\mathbf{x})}\right]
+h(\mathbf{x})-r(\mathbf{x},\mathbf{y})\left(  W^{\ast}(\mathbf{y})-W^{\ast
}(\mathbf{x})\right)  \right)  \nu^{\ast}(\mathbf{x})\\
&  \quad=\sum_{\mathbf{x},\mathbf{y}\in\mathcal{S}^{2}}\left(  r(\mathbf{x}%
,\mathbf{y})\left[  1-e^{-W^{\ast}(\mathbf{y})+W^{\ast}(\mathbf{x})}\right]
+h(\mathbf{x})\right)  \nu^{\ast}(\mathbf{x})-\sum_{\mathbf{x}\in
\mathcal{S}^{2}}\mathcal{L}^{u^{\ast}}W^{\ast}(\mathbf{x})\nu^{\ast
}(\mathbf{x}).
\end{align*}
The first term is $\gamma^{\ast}$. Moreover, since $\mathcal{L}^{u^{\ast}}$ is
the generator associated with $\nu^{\ast}$, the second term is $0$ by
Echeverria's theorem \cite{ethkur}. Thus, $\gamma^{\ast}$ is also the cost
obtained using the control $u^{\ast}$. Since $\gamma$ is the optimal cost it
follows that $\gamma^{\ast}\geq\gamma$.

Combining the two inequalities gives $\gamma=\gamma^{\ast}$. This implies that
the two measures $\bar{\nu}$ and $\nu^{\ast}$ are both minimizers in
\eqref{eq:optEq}. Strict convexity of the rate function $J$ then ensures that
the two measures are in fact the same, $\bar{\nu}=\nu^{\ast}$ and by extension
$W=W^{\ast}$.
\end{proof}

\vspace{\baselineskip}

\begin{proof}
[Proof of Lemma \ref{lemma:minPT}]\medskip Fix a $\bar{w}\in\mathcal{P}%
(\Sigma_{2})\setminus\{(1/2,1/2)\}$ and consider the optimization problem
\begin{align}
\inf\{I(\gamma,\bar{w})  &  :\ \gamma\in\mathcal{P}(\mathcal{S}^{2}%
)\}\label{eq:optProblem}\\
&  =\inf_{\nu\in\mathcal{P}(\mathcal{S}^{2})}\left\{  J(\nu):\ \sum
_{\mathbf{x}\in\mathcal{S}^{2}}\rho(\mathbf{x})\nu(\mathbf{x})=\bar{w}%
_{1},\ \sum_{\mathbf{x}\in\mathcal{S}^{2}}\rho(\mathbf{x}^{R})\nu
(\mathbf{x})=\bar{w}_{2}\right\}  .\nonumber
\end{align}
We are only interested in the minimizing measure $\nu$ and it is enough to
consider any optimization problem that will have the same minimizer as
\eqref{eq:optProblem}. Using Lagrange multipliers $\lambda_{1},\lambda_{2}$,
\eqref{eq:optProblem} can be formulated as the unconstrained optimization
problem
\[
\min_{\gamma,w}\ \left\{  I(\gamma,w)+\lambda_{1}(w_{1}-\bar{w}_{1}%
)+\lambda_{2}(w_{2}-\bar{w}_{2})\right\}  ;
\]
see, e.g., Theorem 8.1 and its extension to equality constraints in \cite{lue}
for the existence of multipliers $\lambda_{1},\lambda_{2}$. Using the fact
that necessarily $\bar{w}_{1}+\bar{w}_{2}=w_{1}+w_{2}=1$, this has the same
minimizer as $\min_{\gamma,w}\ \left\{  I(\gamma,w)+(\lambda_{1}-\lambda
_{2})(w_{1}-\bar{w}_{1})\right\}  $, where the multipliers are chosen so that
$w_{1}=\bar{w}_{1}$. With such multipliers given (and fixed), by using the
definition of $I(\gamma,w)$ the optimization problem becomes
\[
\min_{\nu\in\mathcal{P}(\mathcal{S}^{2})}\left\{  J(\nu)+(\lambda_{1}%
-\lambda_{2})\sum_{\mathbf{x}\in\mathcal{S}^{2}}\rho(\mathbf{x})\nu
(\mathbf{x})\right\}  -(\lambda_{1}-\lambda_{2})\bar{w}_{1},
\]
and we further simplify by dropping the term $-(\lambda_{1}-\lambda_{2}%
)\bar{w}_{1}$.

The Lagrange multipliers $\lambda_{1}$ and $\lambda_{2}$ correspond to
$\bar{w}\neq(1/2,1/2)$ so it cannot be the case that $\lambda_{1}=\lambda_{2}%
$. Let $\lambda$ denote the difference $\lambda_{1}-\lambda_{2}$; without loss
of generality we can assume that $\lambda>0$. Thus, in order to prove the
claim it is enough to consider the minimizer of
\begin{equation}
\min_{\nu\in\mathcal{P}(\mathcal{S}^{2})}\left\{  J(\nu)+\sum_{\mathbf{x}%
\in\mathcal{S}^{2}}h(\mathbf{x})\nu(\mathbf{x})\right\}  , \label{eq:min1}%
\end{equation}
where $h(\mathbf{x})=\lambda\rho(\mathbf{x})$.

For any cost function $h$ we can define the functional $F:\mathcal{P}%
(\mathcal{S}^{2})\rightarrow\mathbb{R}$ by $F(\nu)\doteq\sum_{\mathbf{x}%
\in\mathcal{S}^{2}}h(\mathbf{x})\nu(\mathbf{x})$. This choice of $F$ is of the
form considered in Section \ref{sec:controlProblem}. In particular, $F$ is
bounded and continuous, and from the Laplace principle for $\nu_{T}$ it
follows that
\begin{align*}
\lim_{T\rightarrow\infty}-\frac{1}{T}\log E\left[  e^{-TF(\nu_{T})}\right]
&  =\inf_{\nu\in\mathcal{P}(\mathcal{S}^{2})}\left\{  J(\nu)+F(\nu)\right\} \\
&  =\inf_{\nu\in\mathcal{P}(\mathcal{S}^{2})}\left\{  J(\nu)+\lambda
\sum_{\mathbf{x}\in\mathcal{S}^{2}}\rho(\mathbf{x})\nu(\mathbf{x})\right\}  .
\end{align*}
In Lemma \ref{lemma:equivOpt} it was shown that the minimizer of
\eqref{eq:min1} is
\[
\bar{\nu}(\mathbf{x})=\bar{\mu}(\mathbf{x})e^{-2W(\mathbf{x})},
\]
where $(\gamma,W)$ is a solution to the Bellman equation%
\begin{equation}
0=\sum_{\mathbf{y}\in\mathcal{S}^{2}}r(\mathbf{x},\mathbf{y})\left[
1-e^{-W(\mathbf{x})+W(\mathbf{y})}\right]  -\gamma+\lambda\rho(\mathbf{x}%
),\ \mathbf{x}\in\mathcal{S}^{2}, \label{eqn:Bellman}%
\end{equation}
with the value function $W$ normalized to make $\bar{\nu}$ a probability measure.

Next we use the fact that for this particular choice of $h(\mathbf{x})$,
$\gamma<\lambda/2$ (see Lemma \ref{lem:cost_assym} in the Appendix). The
intuition here is that when $\rho\equiv1/2$ there is no incentive to use the
control and we get the cost $\gamma=\lambda/2$. Moreover when $\rho$ depends
on $\mathbf{x}$ not using an active control will result in the same cost,
since $\rho(\mathbf{x})+\rho(\mathbf{x}^{R})=1$ for all $\mathbf{x}$. In
contrast, by accepting a small increase in the cost due to active control we
can lower the running cost substantially by favoring states with lower $\rho$
value than their symmetric counterpart.

The aim is to show that the optimal measure $\bar{\nu}$ is such that
$M\bar{\nu}\neq\mu$. From the definitions of $M$ and $\rho$ [given for two
temperatures in \eqref{eqn:rho} and \eqref{eq:mapM}, respectively] for
$\mathbf{x}\in\mathcal{S}^{2}$
\begin{align*}
(M\bar{\nu})(\mathbf{x})  &  =\rho(\mathbf{x})\left(  \bar{\nu}(\mathbf{x}%
)+\bar{\nu}(\mathbf{x}^{R})\right) \\
&  =\rho(\mathbf{x})\left(  \bar{\mu}(\mathbf{x})e^{-2W(\mathbf{x})}+\bar{\mu
}(\mathbf{x}^{R})e^{-2W(\mathbf{x}^{R})}\right) \\
&  =\mu(\mathbf{x})\frac{e^{-2W(\mathbf{x})}+e^{-2W(\mathbf{x}^{R})}}{2}.
\end{align*}
We argue by contradiction. Suppose that $M\bar{\nu}=\mu$. Then by the last
display
\begin{equation}
e^{-2W(\mathbf{x})}+e^{-2W(\mathbf{x}^{R})}-2=0\ \text{for}\ \mathbf{x}%
\in\mathcal{S}^{2}. \label{eq:condW}%
\end{equation}
Let $\mathcal{D}$ denote the set of diagonal states in $\mathcal{S}^{2}$:
\[
\mathcal{D}\doteq\{\mathbf{x}\in\mathcal{S}^{2}:\ \mathbf{x}=\mathbf{x}%
^{R}\}.
\]
Note that for $\mathbf{x}\in\mathcal{D}$ always $\rho(\mathbf{x})=1/2$ and
hence $h(\mathbf{x})=\lambda/2$. Moreover, as a special case of
\eqref{eq:condW}, the value function $W$ must be zero on the diagonal
$\mathcal{D}$.

The symmetrized dynamics of the INS process (see Lemma \ref{lemma:symmetry})
imply that if $\mathbf{x}\in\mathcal{D}$ communicates directly with a state
$\mathbf{y}$ then it communicates directly with $\mathbf{y}^{R}$ as well. To
simplify notation we therefore let $\mathcal{A}$ denote the collection of
states that lie above the diagonal. For states $\mathbf{x}\in\mathcal{D}$ the
Bellman equation (\ref{eqn:Bellman}) then takes the form
\[
0=\sum_{\mathbf{y}\in\mathcal{A}}\left(  r(\mathbf{x},\mathbf{y})\left[
1-e^{-(W(\mathbf{y})-W(\mathbf{x}))}\right]  +r(\mathbf{x},\mathbf{y}%
^{R})\left[  1-e^{-(W(\mathbf{y}^{R})-W(\mathbf{x}))}\right]  \right)
-\gamma+\frac{\lambda}{2}.
\]
By symmetry of the rates $r$, if $\mathbf{x}\in\mathcal{D}$ then
$r(\mathbf{x},\mathbf{y})=r(\mathbf{x},\mathbf{y}^{R})$. Combined with the
constraint that $W$ is zero on $\mathcal{D}$ we can rewrite the Bellman
equation for $\mathbf{x}\in\mathcal{D}$ as
\[
0=\sum_{\mathbf{y}\in\mathcal{A}}r(\mathbf{x},\mathbf{y})\left(
2-e^{-W(\mathbf{y})}-e^{-W(\mathbf{y}^{R})}\right)  -\gamma+\frac{\lambda}%
{2}.
\]

The assumptions on $\tau_{1},\tau_{2}$ and the potential $V$ ensure that
$\rho(\mathbf{x})$ is not identically equal to $1/2$, and therefore by Lemma
\ref{lem:cost_assym} in the Appendix $\gamma<\lambda/2$. It follows that $W$
satisfies
\begin{equation}
\sum_{\mathbf{y}\in\mathcal{A}}r(\mathbf{x},\mathbf{y})\left(
2-e^{-W(\mathbf{y})}-e^{-W(\mathbf{y}^{R})}\right)  =\gamma-\frac{\lambda}%
{2}<0,\ \mathbf{x}\in\mathcal{D}. \label{eq:ineqBellman}%
\end{equation}
The rates $r(\mathbf{x},\mathbf{y})$ are all nonnegative and for
\eqref{eq:ineqBellman} to hold requires that for at least one of the states
$\mathbf{y}\in\mathcal{A}$
\[
e^{-W(\mathbf{y})}+e^{-W(\mathbf{y}^{R})}>2.
\]
However, this inequality is not compatible with (\ref{eq:condW}) [see Lemma
\ref{lemma:ineqCont} in the Appendix with $K=2$, $a_{1}=e^{-W(\mathbf{y})}$
and $a_{2}=e^{-W(\mathbf{y}^{R})}$]. Hence, the condition \eqref{eq:condW}
violates the Bellman equation for diagonal states and it cannot be that
$M\bar{\nu}=\mu$. This completes the proof.\medskip
\end{proof}

\begin{proof}
[Proof of Proposition \ref{prop:convTemp}]In addition to showing the claimed
convergence as $T\rightarrow\infty$ we will show that the probability decays
exponentially in $T$. Consider the mapping
\[
a\rightarrow R(a)\doteq\inf\left\{  I(\gamma,w):w\in\left(  \mathcal{N}%
_{a}(w^{\ast})^{c}\right)  \right\}  .
\]
The rate function $J$ has $\bar{\mu}$, the symmetrized version of the original
stationary distribution, as its unique minimizer and $J(\bar{\mu})=0$.
Moreover, $\bar{\mu}$ maps to $w^{\ast}$ in the sense that $\langle
\rho^{\sigma},\bar{\mu}\rangle=1/2$ for $\sigma=\{1,2\}$ and $\sigma=\{2,1\}$
(the permutations available for two temperatures); see Section \ref{sec:LD}.
Consider the set
\[
C_{a}\doteq\left\{  (\gamma,w):w\in\left(  \mathcal{N}_{a}(w^{\ast})\right)
^{c}\right\}  .
\]
This is a closed set and, since $I$ is a rate function on $\mathcal{P}%
(\mathcal{S}^{2})\times\mathcal{P}(\Sigma_{2})$, the infimum of $I$ over
$C_{a}$ is achieved and necessarily $R(a)>0$ whenever $a>0$. The mapping
$a\rightarrow R(a)$ is monotone and thus continuous on a dense subset of
$(0,1)$. Therefore, without loss of generality we may assume $a$ to be a
continuity point of $R$ [if not, just replace $a$ by a continuity point in
$(0,a)$]. Using the definition \eqref{eq:I2} of $I_{2}$ it holds that
\[
\inf_{w\in(\mathcal{N}_{a}(w^{\ast}))^{c}}I_{2}(w)=\inf_{w\in\left(
(\mathcal{N}_{a}(w^{\ast}))^{c}\right)  ^{\circ}}I_{2}(w)=R(a),
\]
and
\[
\lim_{T\rightarrow\infty}\frac{1}{T}\log P(\rho_{T}\in(\mathcal{N}_{a}%
(w^{\ast}))^{c})=-R(a).
\]

Next, for some $\epsilon>0$, consider the event
\[
\left\{  \eta_{T}^{\infty}\in\mathcal{N}_{\epsilon}(\mu),\ \rho_{T}%
\in(\mathcal{N}_{a}(w^{\ast}))^{c}\right\}  .
\]
Using the large deviation upper bound,
\begin{align*}
&  \limsup_{T\rightarrow\infty}\frac{1}{T}\log P\left(  \eta_{T}^{\infty}%
\in\mathcal{N}_{\epsilon}(\mu),\ \rho_{T}\in(\mathcal{N}_{a}(w^{\ast}%
))^{c}\right) \\
&  \quad\leq-\inf\left\{  I(\gamma,w):\gamma\in\bar{\mathcal{N}}_{\epsilon
}(\mu),\ w\in(\mathcal{N}_{a}(w^{\ast}))^{c}\right\}  ,
\end{align*}
where $\bar{\mathcal{N}}_{\epsilon}(\mu)$ is the closure of ${\mathcal{N}%
}_{\epsilon}(\mu)$. We now claim that for small enough $\epsilon>0$ the
infimum in the last display is strictly larger than $R(a)$. If so, then%
\begin{align*}
&  \limsup_{T\rightarrow\infty}\frac{1}{T}\log P\left(  \eta_{T}^{\infty}%
\in\mathcal{N}_{\epsilon}(\mu)|\rho_{T}\in(\mathcal{N}_{a}(w^{\ast}%
))^{c}\right) \\
&  \quad\leq-\inf\left\{  I(\gamma,w):\gamma\in\bar{\mathcal{N}}_{\epsilon
}(\mu),\ w\in(\mathcal{N}_{a}(w^{\ast}))^{c}\right\}  +R(a)\\
&  \quad<0,
\end{align*}
which gives the exponential decay to zero of the conditional probability.

To show that the infimum is greater than $R(a)$ for $\epsilon>0$ small enough
we argue by contradiction. For the given $a$ the set
\begin{align*}
&  \{(\gamma,w):I(\gamma,w)=R(a),w\in(\mathcal{N}_{a}(w^{\ast}))^{c}\}\\
&  \quad=\{(\gamma,w):I(\gamma,w)\leq R(a),w\in(\mathcal{N}_{a}(w^{\ast}%
))^{c}\}
\end{align*}
is compact. In addition, by Lemma \ref{lemma:minPT} the projection onto the
first component does not contain the original invariant measure $\mu$. If the
claim is not true then for every $\epsilon=1/n$, $n\in\mathbb{N}$,
\[
\inf\{I(\gamma,w):\gamma\in\bar{\mathcal{N}}_{1/n}(\mu),w\in(\mathcal{N}%
_{a}(w^{\ast}))^{c}\}\leq R(a).
\]
Since $I$ has compact level sets this means the infimum is attained, and
hence, using compactness of level sets once more, by choosing a convergent
subsequence (indexed by $n$) we have $(\gamma_{n},w_{n})$ such that
\[
I(\gamma_{n},w_{n})\leq R(a),\ \gamma_{n}\rightarrow\mu,\ \text{and }%
w_{n}\rightarrow\hat{w}\in(\mathcal{N}_{a}(w^{\ast}))^{c}.
\]
By lower semicontinuity $I(\mu,\hat{w})\leq R(a)<\infty$. However, this
contradicts Lemma \ref{lemma:minPT}, and completes the proof.
\end{proof}

\section{Further qualitative properties}

\label{sec:asymmetry} In this section we study other qualitative properties of
the infinite swapping process and, by extension, parallel tempering. In
particular, we consider the question of how symmetries and asymmetries of the
energy landscape can affect the behavior of infinite swapping. As we will see,
when an energy landscape is symmetric in the sense that the energy potential
at the minima of the two wells are the same, then subject to the condition
that the higher temperature is sufficiently large that the energy barrier is
not an obstacle to movement between the wells at that temperature, infinite
swapping will converge rapidly. However, when asymmetry holds and the values
at the local minima are not the same, then surprising behavior can result. In
fact, a \textquotedblleft secondary metastability\textquotedblright\ can
emerge, depending on the degree to which the depths of the two wells are not
symmetric. This second metastability issue is less of a hindrance than the
original energy barrier, but could substantially slow convergence of the
weighted empirical measure. In fact a counter-intuitive behavior is observed,
in that even \textit{reducing} the energy barrier of one well while holding
the other constant may slow the convergence of $\eta_{T}^{\infty}$. Besides a
heuristic explanation for this, we will demonstrate the effect via the large
deviation rate function using the stochastic control interpretation of the
last section.

A second issue we discuss is related to the fact that optimizers in a large
deviations analysis can explain how a particular rare event occurs. Suppose
one observes that the weighted empirical measure has not properly assigned
mass between the wells in such a two well model. By minimizing the rate
function subject to such a constraint (e.g., the well to the right is given a
fraction $\kappa(1-\delta)$ of the mass under $\eta_{T}^{\infty}$ when the
stationary measure gives it $\kappa$), one can find the most likely observed
weighted empirical distribution given this \textquotedblleft
error.\textquotedblright\ The solution to the associated stochastic control
problem, and specifically the form of the feedback control, will then identify
how this error occurred. In particular, it identifies those places in the
state space where poor sampling of the underlying distribution has the largest
impact and produces the greatest error. This issue is discussed and
illustrated via numerical computation at the end of the section.

\subsection{Symmetric and asymmetric double wells}

In this subsection and the next the potential will have two local minima at
$x_{L}<0$ and $x_{R}>0$ and a local maximum (top of the separating barrier) at
$0$. For simplicity we think of the underlying state space $\mathcal{S}$ as
being a grid in $\mathbb{R}$ that includes the local minima as well as the top
of the barrier. The restriction to $\mathbb{R}$ is to simplify the discussion,
while the assumption of a finite state space will allow the explicit numerical
solution to certain optimization problems. However, the conclusions will hold
more generally within the restriction of a two-well landscape.

We first review certain properties of the INS process. The INS process
$\mathbf{Y}$, which takes values in $\mathbb{R}^{2}$, has four stable points,
which one can view as being the local minima of an implied cost potential of
the form
\[
U(y_{1},y_{2})=-\log\left(  e^{-\frac{1}{\tau_{1}}V(y_{1})-\frac{1}{\tau_{2}%
}V(y_{2})}+e^{-\frac{1}{\tau_{1}}V(y_{2})-\frac{1}{\tau_{2}}V(y_{1})}\right)
.
\]
Note that regardless of the form of $V$, this potential is symmetric about the
diagonal $y_{1}=y_{2}$, and hence the dynamics are likewise symmetric. The
mean value for increments of the INS process are illustrated in Figure
\ref{fig:1}.
\begin{figure}
[ptb]
\begin{center}
\includegraphics[
height=2.597in,
width=2.8141in
]%
{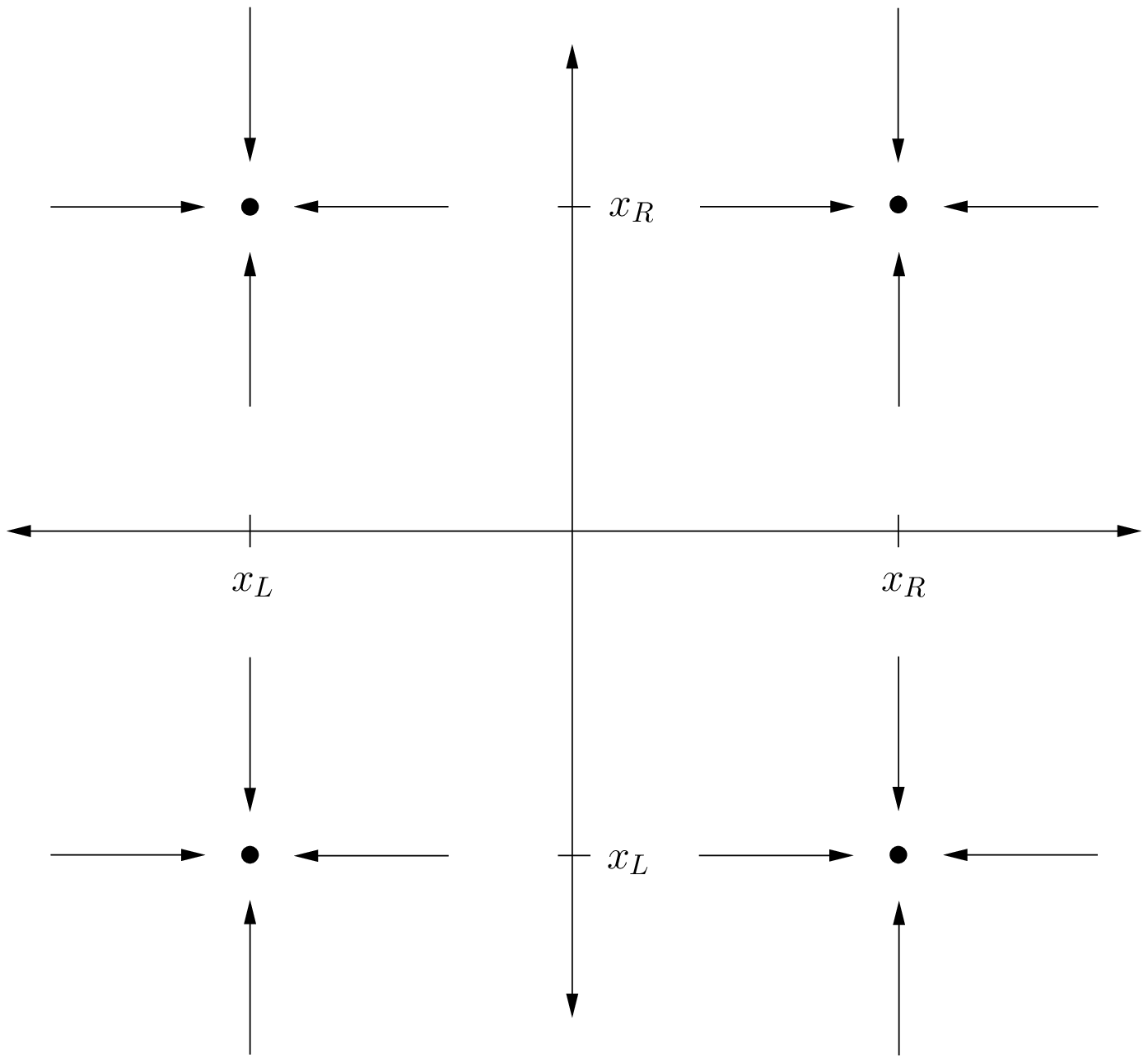}%
\label{fig:1}%
\end{center}
\end{figure}
If one considers $\mathbf{Y}$ as a process in $\mathbb{R}^{2}$ with the
indicated metastable states, then the primary impediment to good sampling and
an accurate approximation to the symmetrized distribution $\frac{1}{2}%
[\mu(x_{1},x_{2})+\mu(x_{2},x_{1})]$ is the movement of the process between
neighborhoods of the four states $(x_{L},x_{L}),(x_{L},x_{R}),(x_{R},x_{L})$
and $(x_{R},x_{R})$.

An alternative perspective is to consider $\mathbf{Y}$ as giving the locations
of two particles, whose transition rates are described by (\ref{eq:defGamma}),
with $\rho$ from (\ref{eqn:rho}) of the form
\[
\rho(x_{1},x_{2})=\frac{1}{1+e^{\left[  \frac{1}{\tau_{1}}-\frac{1}{\tau_{2}%
}\right]  (V(x_{1})-V(x_{2}))}}.
\]
Since $\Gamma^{1}$ is the intensity matrix of the low temperature dynamics and
$\Gamma^{2}$ that of the high temperature, we see that whenever $V(Y_{1})$ is
larger than $V(Y_{2})$ by a certain amount, then $Y_{1}$ has essentially been
given the high temperature dynamics and $Y_{2}$ the low, and conversely. This
is due to the exponential scaling in $\rho$, and has an analogue for parallel
tempering when the rate of swap attempts is high. The situation is illustrated
in Figure \ref{fig:rates} for an asymmetric landscape.
\begin{figure}
[ptb]
\begin{center}
\includegraphics[
height=1.1173in,
width=4.3007in
]%
{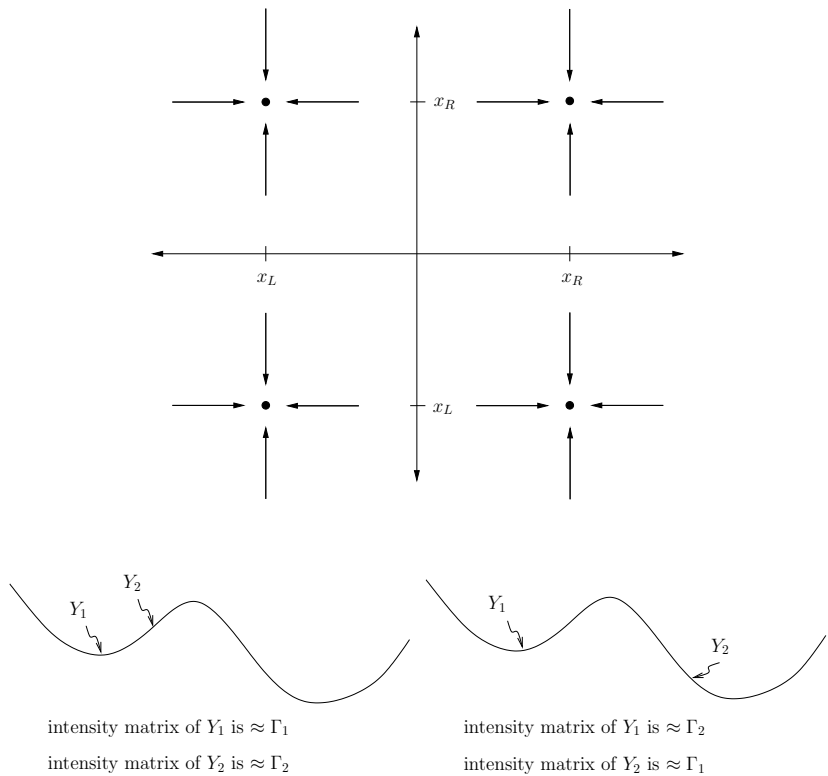}%
\caption{Assignment of dynamics due to relative heights}%
\label{fig:rates}%
\end{center}
\end{figure}

We next consider how this \textquotedblleft toggling\textquotedblright%
\ between high and low temperature dynamics affects the behavior of
$\mathbf{Y}$, and in particular how it affects the qualitative properties of
the empirical measure of $\mathbf{Y}$ with regard to sampling in
$\mathbb{R}^{2}$. Suppose the well is symmetric as in Figure \ref{fig:sym}
with wells of depth $h$.
\begin{figure}
[ptb]
\begin{center}
\includegraphics[
height=2.1223in,
width=2.9594in
]%
{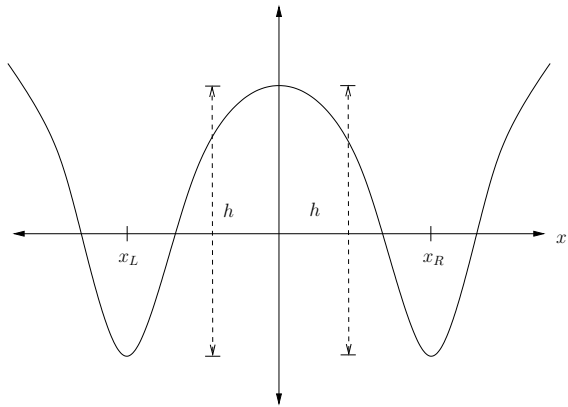}%
\caption{Symmetric double well}%
\label{fig:sym}%
\end{center}
\end{figure}
Recall the assumption that the higher temperature is such that the process can
easily cross the barrier separating the two wells. We claim that this implies
the infinite swapping process $\mathbf{Y}$ easily moves between the four
metastable points. Indeed, if both particles are placed in the left well [so
that $\mathbf{Y}$ is near $(x_{L},x_{L})$] then after a relatively short time
one of the two particles will scale the landscape to a point somewhere between
$x_{L}$ and $0$, while the other is near $x_{L}$. At this stage the particle
higher on the energy landscape essentially follows the dynamic of the
intensity matrix $\Gamma_{2}$, at least until it and the particle near $x_{L}$
reach places in the landscape of the same height. This can happen by the
particle falling back to the basin of the left well, or by crossing the
barrier to enter the neighborhood of $x_{R}$. By assumption, the latter is not
a particularly rare event, and corresponds to $\mathbf{Y}$ ending up near
$(x_{L},x_{R})$ or $(x_{R},x_{L})$. The argument can be repeated and, owing to
the symmetry, movement of $\mathbf{Y}$ between neighborhoods of all the points
$(x_{L},x_{L}),(x_{L},x_{R}),(x_{R},x_{L})$ and $(x_{R},x_{R})$ occurs with
the same frequency.

We next contrast this with what can be expected when the heights are changed,
and in particular if there is a reduction of the height of the right well, so
that in Figure \ref{fig:asym} we have $h_{L}=h$ and $h_{R}<h$. In the case the
discussion just given applies equally well when both particles start in a
neighborhood of $x_{L}$, but the behavior of the process now changes radically
when $\mathbf{Y}$ is near $(x_{L},x_{R})$ or $(x_{R},x_{L})$. To be specific,
assume that it is near $(x_{L},x_{R})$. The transitions of interest are: (a)
$Y_{1}$ joins $Y_{2}$ in the right well, and (b) $Y_{2}$ returns and joins
$Y_{1}$ in the left well. The event (b) is actually quite likely, since if
$Y_{1}$ is in the deeper well then it is highly probable that it is lower on
the energy landscape, and therefore $Y_{2}$ is given the high temperature
dynamic. For the same reason (a) is unlikely. Indeed, the only way it can
happen is if $Y_{1}$, in spite of being given the lower temperature dynamics,
is able to move up the landscape to a point were it exceeds the typical energy
value that $Y_{2}$ sees while in the right well (and using the higher
temperature dynamic). Thus $Y_{1}$ must overcome an energy barrier, whose size
is related to the degree of asymmetry of the two well depths. We call this a
\textit{secondary metastability,} and note that the effect of lowering one of
the energy barriers in the single particle model is that it increases an
energy barrier for the two particle INS process, leading to poorer sampling of
the state space by the process.%
\begin{figure}
[ptb]
\begin{center}
\includegraphics[
height=2.1223in,
width=2.981in
]%
{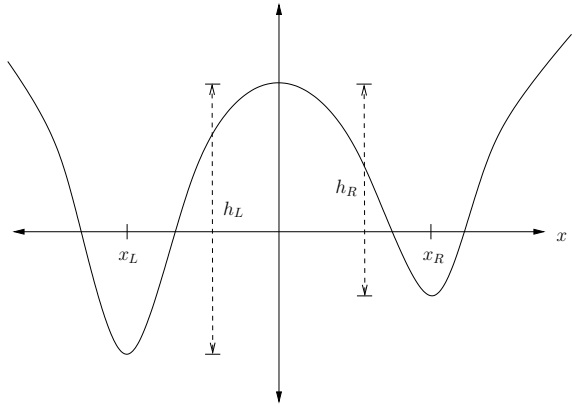}%
\caption{Asymmetric double well}%
\label{fig:asym}%
\end{center}
\end{figure}

\subsection{Numerical example}

The issue described in the last section is reflected in the large deviation
rate function. This will be illustrated by numerically solving a constrained
optimization problem, though with a constraint of a different form from the
last section. We are particularly interested in the impact of the secondary
metastability on the accuracy of integrals with respect to the low temperature
marginal. Specifically, we consider%
\[
\inf_{\nu\in\mathcal{P}(\mathcal{S}^{2})}\left\{  J(\nu):\ \sum_{\mathbf{x}%
\in\lbrack0,\infty)\times\mathcal{S}}\rho(\mathbf{x})\nu(\mathbf{x}%
)+\rho(\mathbf{x}^{R})\nu(\mathbf{x}^{R})=\kappa(1-\delta)\right\}  ,
\]
where as before $\mathbf{x}^{R}=(x_{2},x_{1})$. Here $\kappa\in(0,1)$ is the
mass that the low temperature marginal places on the set $[0,\infty)$, and
$\delta$ is the size of the error. Recalling that $\rho(\mathbf{x}%
)\nu(\mathbf{x})+\rho(\mathbf{x}^{R})\nu(\mathbf{x}^{R})$ is the mapping that
takes a symmetrized measure to its weighted counterpart, this variational
problem will identify the most likely distribution for the mass given that it
is incorrectly assigned by the sampling. Note that one expects the relative
distribution within each well to converge much faster than the relative
weights between wells. Hence this constraint focuses attention on the most
likely error that the sampling must overcome, which is properly assigning the
mass between the two wells.

We study this problem using the same methods as in the previous section, i.e.,
introduce a Lagrange multiplier and analyze the resulting ergodic control
problem. Since the only difference is the form of the constraint, we do not
repeat any details in the derivation of the control problem. For numerical
illustration of the effect of asymmetry in the potential landscape we use the
form of Glauber dynamics for two temperatures as defined in Section
\ref{sec:Model}. The claim of the previous subsection, based on the heuristic
discussion involving the stochastic control problem of Section
\ref{sec:controlProblem}, is that the secondary metastability induced by an
asymmetric potential $V$ slows the convergence of $\eta_{T}^{\infty}$ to $\mu
$. To demonstrate this effect, we show that the rate associated with the same
relative error (i.e., value of $\delta$) is lower for the asymmetric case,
indicating that the outcome is more likely.

In order to have a convenient way of constructing potentials with similar
shape but varying degree of asymmetry we use the following family of
functions, sometimes referred to as Franz potential,
\[
V(x)=V(x;\alpha)=\frac{3x^{4}-4(\alpha-1)x^{3}-6\alpha x^{2}}{2\alpha+1}+1,
\]
For every value of $\alpha$, $V(\cdot;\alpha)$ has a fixed local minimum at
$x_{L}=-1$, a varying local minimum at $x_{R}=\alpha$ and a fixed barrier of
height $1$ at the origin. Figure \ref{fig:Franz2} shows the potential $V$ for
some values of $\alpha$. In particular, taking $\alpha=1$ produces a symmetric
two well potential and $\alpha=0$ produces a single well.
\begin{figure}
[ptb]
\begin{center}
\includegraphics[
height=1.5601in,
width=2.4768in
]%
{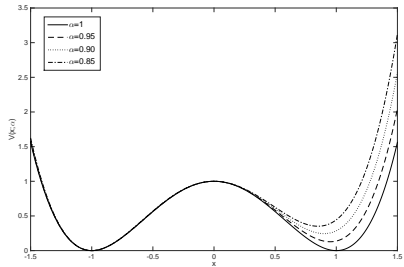}%
\caption{Franz potential $V$ for some values of $\alpha$.}%
\label{fig:Franz2}%
\end{center}
\end{figure}

Table \ref{table:kappa} shows the value of $\kappa=\mu_{1}([0,\infty))$ for
some values of $\alpha$ when the Franz potential is used to define the
underlying Gibbs measure.

\begin{table}[h]
\centering%
\begin{tabular}
[c]{|c|c|c|c|c|c|}\hline\hline
$\alpha$ & 1 & 0.97 & 0.95 & 0.90 & 0.85\\\hline
$\kappa$ & 0.500 & 0.318 & 0.223 & 0.0840 & 0.0316\\\hline\hline
\end{tabular}
\caption{The probability $\kappa=\mu_{1}([0,\infty))$ for some values of
$\alpha$ in the Franz potential}%
\label{table:kappa}%
\end{table}In Table \ref{table:numFrantz} numerical results for the discussed
optimization are presented. Table \ref{table:numFrantz2} repeats the results
but now normalized to the symmetric case $\alpha=1$ for each $\delta$.
\begin{table}[h]
\centering%
\begin{tabular}
[c]{|c|c|c|c|c|c|}\hline\hline
& \multicolumn{5}{c|}{$\alpha$}\\\hline
$\delta$ & 1 & 0.97 & 0.95 & 0.90 & 0.85\\\hline
$0.05$ & 1.5250e-5 & 8.5478e-6 & 6.0461e-6 & 2.7959e-6 & 1.4012e-6\\
$0.10$ & 6.1151e-5 & 3.4911e-5 & 2.4887e-5 & 1.1609e-5 & 5.8911e-6\\
$0.15$ & 1.3802e-4 & 8.0513e-5 & 5.7975e-5 & 2.7562e-5 & 1.4163e-5\\
$0.20$ & 2.4655e-4 & 1.4704e-4 & 1.0702e-4 & 5.1900e-5 &
2.7206e-5\\\hline\hline
\end{tabular}
\caption{Large deviation rate for the minimizing measure with a
low-temperature marginal that puts mass $\kappa(1-\delta)$ in the shallow
well, for different values of $\alpha$ in the Franz potential $V$; $\tau
_{1}=0.1,\ \tau_{2}=0.5$, $|\mathcal{S}|=12$.}%
\label{table:numFrantz}%
\end{table}\begin{table}[h]
\centering%
\begin{tabular}
[c]{|c|c|c|c|c|c|}\hline\hline
& \multicolumn{5}{c|}{$\alpha$}\\\hline
$\delta$ & 1 & 0.97 & 0.95 & 0.90 & 0.85\\\hline
$0.05$ & 1 & 0.5605 & 0.3965 & 0.1833 & 0.09188\\
$0.10$ & 1 & 0.5709 & 0.4070 & 0.1898 & 0.09634\\
$0.15$ & 1 & 0.5833 & 0.4200 & 0.1997 & 0.1026\\
$0.20$ & 1 & 0.5964 & 0.4341 & 0.2105 & 0.1103\\\hline\hline
\end{tabular}
\caption{Large deviation rate for the minimizing measure with a
low-temperature marginal that puts mass $\kappa(1-\delta)$ in the shallow
well, normalized to the rate for $\alpha=1$ (symmetric potential); $\tau
_{1}=0.1,\ \tau_{2}=0.5$, $|\mathcal{S}|=12$.}%
\label{table:numFrantz2}%
\end{table}The results in Tables \ref{table:numFrantz}-\ref{table:numFrantz2}
illustrate how the rate function changes with $\kappa$ and $\delta$ when the
measures low-temperature marginal is restricted to assign less mass to
$[0,\infty)$ than the true invariant distribution $\bar{\mu}$. In particular,
it illustrates how the optimal value of the rate function decreases with the
level of asymmetry when the amount of mass that is redistributed is fixed.
This observation corresponds precisely to the large deviation interpretation
that for increased level of asymmetry the empirical measure will take a longer
time to converge. \newline\newline

\subsection{How poor sampling will occur}

Our second and final use of the rate function to study qualitative properties
of Monte Carlo addresses the following question. Suppose that a given Markov
process has invariant distribution $\mu$ that is concentrated in two wells (as
in a single temperature version of the model just considered). Suppose we also
consider a measure other than $\mu$ that one is likely to see prior to
convergence, e.g., a minimizer of the rate function subject to a constraint on
improperly assigning mass to the two wells (also as in the last section). By
solving the associated stochastic control problem we will find the change of
measure (change of jump rates) which minimizes the average cost per unit time
to hit the given target measure. Using the large deviation rate as in the
proof of Proposition \ref{prop:convTemp} to bound conditional probabilities
given a certain outcome, one can characterize the parts of the state space
where the observed empirical data collected along the simulated trajectory
deviates from what is expected based on the underlying dynamics. In other
words, if one were to attempt to infer the true dynamics based on the
empirical data, the solution to the control problem will tell us where these
inferred dynamics will deviate most from the true dynamics. One can imagine
that is at precisely these locations that greater accuracy in sampling (e.g.,
in approximating transition probabilities) would have the greatest impact on
overall performance. Although we do not propose a particular use along those
lines at this time, it seems to be interesting information with some potential
for improving schemes.

Here we consider the case of only one temperature $\tau=0.1$ and the
optimization problem
\[
\inf_{\nu\in\mathcal{P}(\mathcal{S})}\left\{  J(\nu):\nu([0,\infty
))=\kappa(1-\delta)\right\}  ,
\]
where $\kappa$ is the amount of mass the invariant distribution of the
underlying Glauber dynamics - with Franz potential - puts in the right well.
As in the previous subsection, the constraint amounts to placing less mass in
the shallow well compared to the invariant measure $\mu_{1}$.

Figure \ref{fig:value1} shows the value function $W$ that is associated with
the solution $\nu$ to the optimization problem for $\alpha=1$ (symmetric
well). From the value function we can compute the additional factor
$\text{exp}\{W(y)-W(x)\}$ in the optimal control for a jump from $x$ to $y$;
Figures \ref{fig:W1} and \ref{fig:W2} show the extra factor when $y$ is one
step to the right and left, respectively. Note that the controlled jump rates
differ from the uncontrolled ones in a neighborhood of the origin. Of course
these states are not visited much during the simulation of a trajectory, and
for this reason alone one might expect the numerics to poorly approximate the
true distributions (e.g., point to point conditional densities). However, this
statement applies to many parts of the state space, and the critical
difference is that errors here are more important in producing error to the
approximation of the invariant distribution. A possible fix would be to spend
some computational effort estimating the critical quantities (e.g., the
conditional probability to send at $\pm0.25$ after reaching $0.0$) beforehand,
and then use these more accurate estimates as the basis for a simulation
schemes that excises the corresponding parts of the simulated trajectory.
\begin{figure}
[ptb]
\begin{center}
\includegraphics[
height=1.785in,
width=2.4837in
]%
{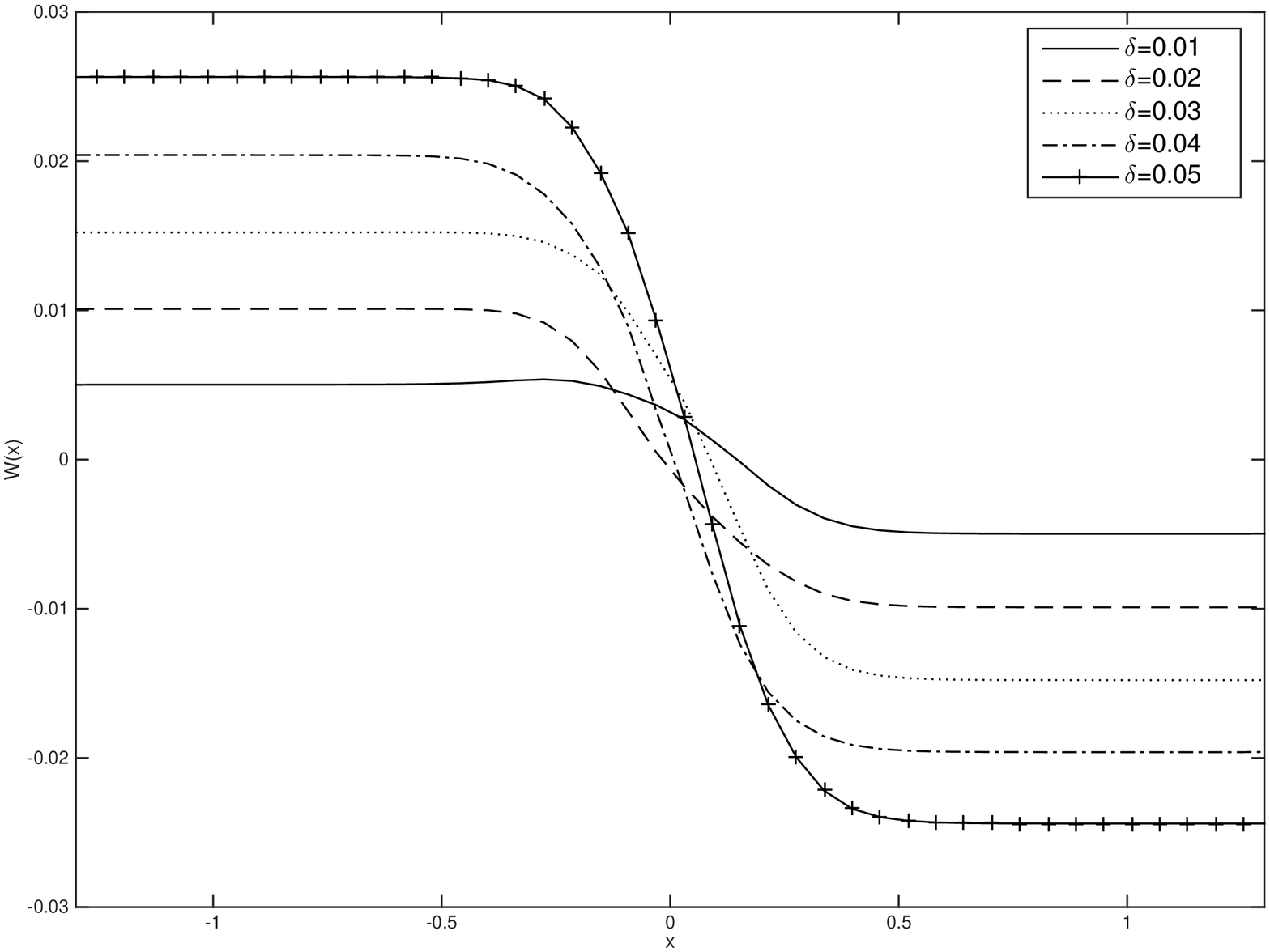}%
\caption{Value function $W$ when $\alpha=1$; $|\mathcal{S}|=50$, $\tau=0.1$. }%
\label{fig:value1}%
\end{center}
\end{figure}

\begin{figure}
[ptb]
\begin{center}
\includegraphics[
height=1.785in,
width=2.4837in
]%
{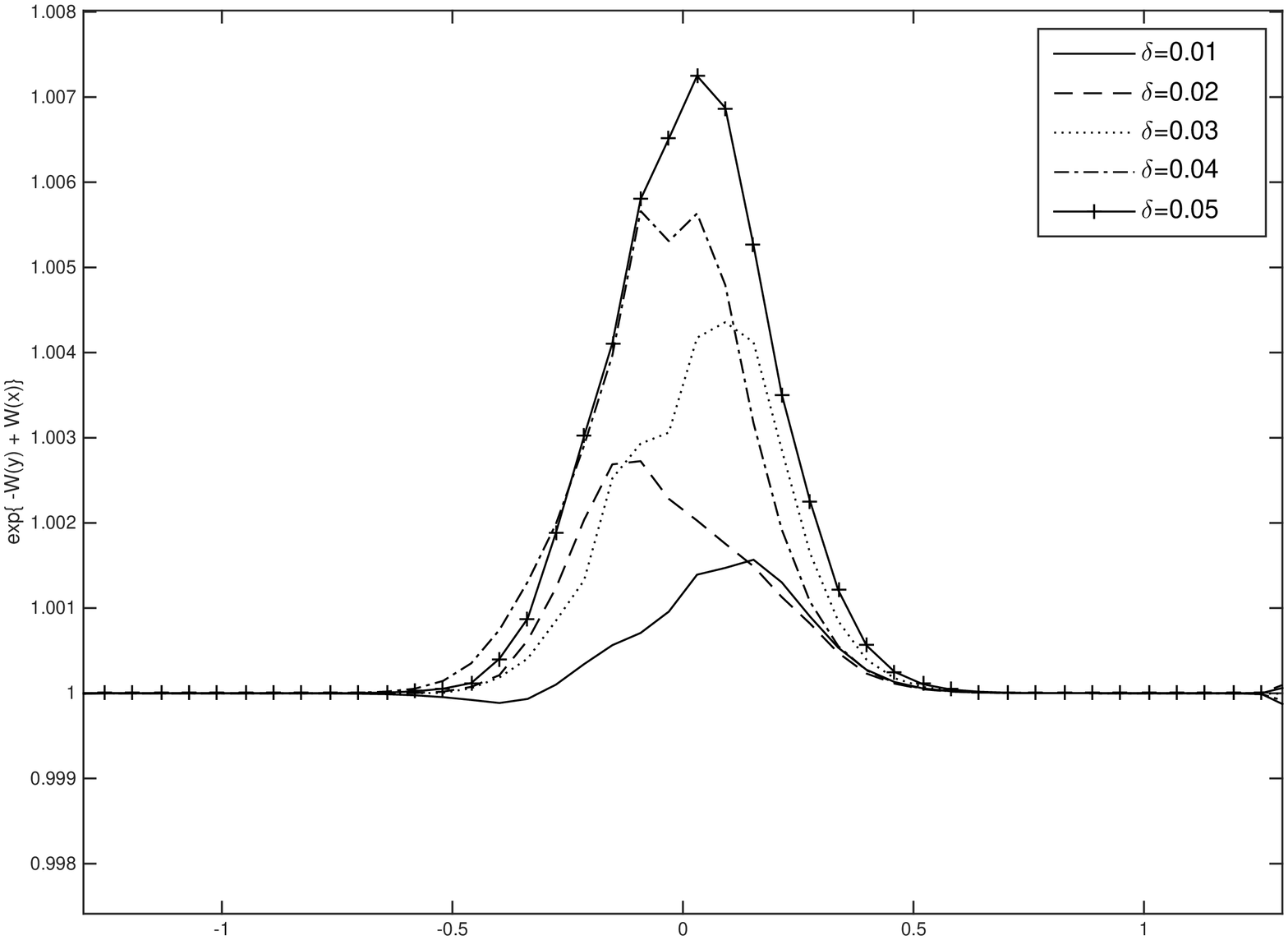}%
\caption{Additional factor in optimal control when $\alpha=1$ and jumps to the
right; $|\mathcal{S}|=50$, $\tau=0.1$. }%
\label{fig:W1}%
\end{center}
\end{figure}
\begin{figure}
[ptb]
\begin{center}
\includegraphics[
height=1.785in,
width=2.4837in
]%
{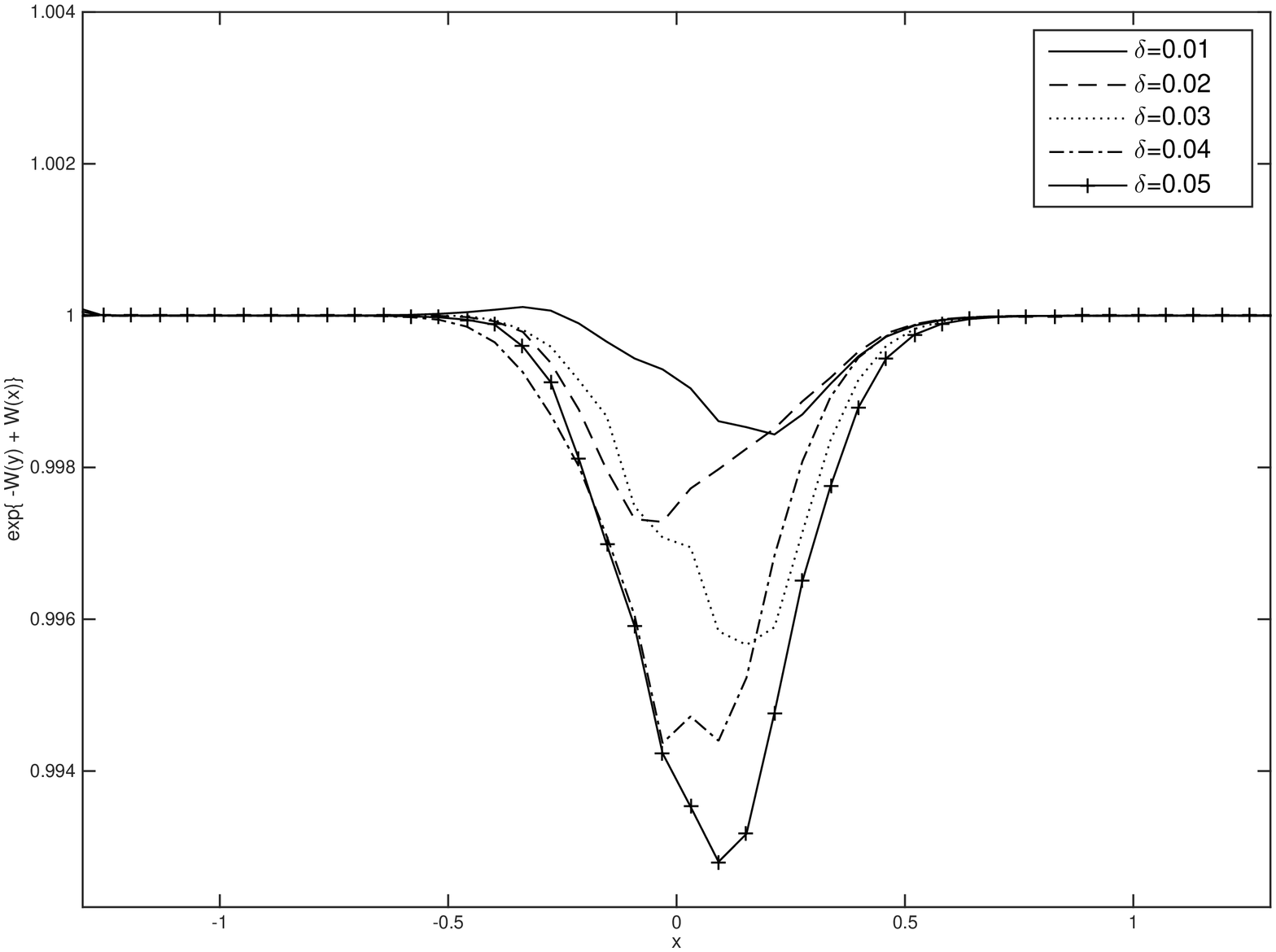}%
\caption{Additional factor in optimal control when $\alpha=1$ and jumps to the
left; $|\mathcal{S}|=50$, $\tau=0.1$. }%
\label{fig:W2}%
\end{center}
\end{figure}

\appendix

\section{Ancillary results}

\begin{lemma}
\label{lemma:ineqCont} For any sequence $a_{1},a_{2},\dots,a_{K}$ such that
$a_{i}\geq0$ for all $i$ and $K\in\lbrack0,\infty)$,
\[
\sum_{i=1}^{K}a_{i}=K
\]
and
\[
\sum_{i=1}^{K}a_{i}^{1/2}>K
\]
cannot both be true.
\end{lemma}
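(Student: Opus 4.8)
The plan is to recognize this as an immediate consequence of the concavity of the square-root function, phrased either as Jensen's inequality or, equivalently, as an application of the Cauchy--Schwarz inequality. The argument is elementary and no serious obstacle is anticipated; the only points requiring a moment's care are the degenerate case and the (irrelevant here) question of strictness.

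First I would dispose of the trivial situation: if $K=0$ the first equation forces the empty sum to equal $0$, which is consistent, but then $\sum_{i=1}^{K}a_i^{1/2}=0$, so the strict inequality $\sum a_i^{1/2}>0$ fails; hence there is nothing to prove. So assume $K\geq 1$ and suppose, for contradiction, that $\sum_{i=1}^{K}a_i=K$. By the Cauchy--Schwarz inequality applied to the vectors $(a_1^{1/2},\dots,a_K^{1/2})$ and $(1,\dots,1)$,
\[
\sum_{i=1}^{K}a_i^{1/2}=\sum_{i=1}^{K}a_i^{1/2}\cdot 1\leq\left(\sum_{i=1}^{K}a_i\right)^{1/2}\left(\sum_{i=1}^{K}1\right)^{1/2}=K^{1/2}\cdot K^{1/2}=K.
\]
(Equivalently one may invoke Jensen's inequality for the concave map $t\mapsto t^{1/2}$ applied to the uniform average $\frac{1}{K}\sum a_i$.) This contradicts the assumption that $\sum_{i=1}^{K}a_i^{1/2}>K$, so the two displayed relations cannot hold simultaneously.

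I expect the ``hard part'' to be essentially nonexistent; the only subtlety worth a sentence is that Cauchy--Schwarz gives equality precisely when all the $a_i$ coincide, i.e.\ $a_i\equiv 1$, which is consistent with $\sum a_i=K$ but gives $\sum a_i^{1/2}=K$ rather than a strict inequality --- confirming that the strict inequality in the lemma statement is what makes the two conditions genuinely incompatible. For the application in the proof of Lemma~\ref{lemma:minPT} (with $K=2$, $a_1=e^{-W(\mathbf{y})}$, $a_2=e^{-W(\mathbf{y}^R)}$), condition \eqref{eq:condW} is exactly $a_1+a_2=2$, while the Bellman inequality \eqref{eq:ineqBellman} forces $a_1^{1/2}+a_2^{1/2}>2$ for some $\mathbf{y}$, and the lemma rules this out.
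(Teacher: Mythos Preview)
Your proof is correct and is essentially the same as the paper's: both apply Cauchy--Schwarz (the paper phrases it as H\"older after the cosmetic substitution $b_i=a_i/K$) to bound $\sum a_i^{1/2}\le K$ under the constraint $\sum a_i=K$. Your version is slightly more direct in skipping the normalization step, but the arguments are identical in substance.
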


\begin{proof}
We can assume without loss that $K>0$. Let $b_{i}=a_{i}/K$, so that $\left\{
b_{i},i=1,\ldots,K\right\}  $ is a probability. By Hölder's inequality%
\[
\sum_{i=1}^{K}b_{i}^{1/2}\leq K^{1/2}\left(  \sum_{i=1}^{K}b_{i}\right)
^{1/2}=K^{1/2}.
\]
Using $b_{i}=a_{i}/K$ gives $\sum_{i=1}^{K}a_{i}^{1/2}\leq K$, which completes
the argument.
\end{proof}

\begin{lemma}
\label{lem:cost_assym}Consider the ergodic control problem or equivalent
minimization problem of Lemma \ref{lemma:equivOpt}, with $h(\mathbf{x}%
)=\lambda\rho(\mathbf{x})$. For two temperatures the optimal cost
$\gamma^{\ast}$ satisfies
\[
\gamma^{\ast}<\frac{\lambda}{2}.
\]
In the general case with $K$ temperatures, $\gamma^{\ast}<\lambda/K!$
\end{lemma}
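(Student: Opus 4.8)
The plan is to work with the equivalent minimization problem from Lemma~\ref{lemma:equivOpt},
\[
\gamma^{\ast}=\inf_{\nu\in\mathcal{P}(\mathcal{S}^{K})}\Phi(\nu),\qquad
\Phi(\nu)\doteq J(\nu)+\lambda\sum_{\mathbf{x}\in\mathcal{S}^{K}}\rho(\mathbf{x})\nu(\mathbf{x}),
\]
with $\lambda>0$, and to show that $\bar{\mu}$ is \emph{not} optimal by exhibiting a small perturbation of it with strictly smaller cost. First I record the ``do nothing'' bound. Since $J(\bar{\mu})=0$, and using $\sum_{\sigma\in\Sigma_{K}}\rho(\mathbf{x}^{\sigma})=1$ together with the symmetry of $\bar{\mu}$ and the reindexing identity $\sum_{\mathbf{x}}f(\mathbf{x})=\tfrac{1}{K!}\sum_{\mathbf{x}}\sum_{\sigma\in\Sigma_{K}}f(\mathbf{x}^{\sigma})$,
\[
\sum_{\mathbf{x}\in\mathcal{S}^{K}}\rho(\mathbf{x})\bar{\mu}(\mathbf{x})
=\frac{1}{K!}\sum_{\mathbf{x}\in\mathcal{S}^{K}}\bar{\mu}(\mathbf{x})\sum_{\sigma\in\Sigma_{K}}\rho(\mathbf{x}^{\sigma})
=\frac{1}{K!},
\]
so that $\Phi(\bar{\mu})=\lambda/K!$ and therefore $\gamma^{\ast}\le\lambda/K!$ already. (For $K=2$ this is the statement $\sum_{\mathbf{x}}\rho(\mathbf{x})\bar{\mu}(\mathbf{x})=1/2$, i.e.\ using no active control yields cost $\lambda/2$.)

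For the strict inequality I would perturb $\bar{\mu}$ in a direction that lowers the linear term. The decisive point is that the first variation of $J$ at $\bar{\mu}$ vanishes: $\bar{\mu}$ is the unique minimizer of $J$, it lies in the interior of $\mathcal{P}(\mathcal{S}^{K})$ because $\bar{\mu}(\mathbf{x})>0$ for all $\mathbf{x}$, and $J$ is smooth near $\bar{\mu}$ since $\theta=\nu/\bar{\mu}$ is bounded away from $0$ there; hence $DJ(\bar{\mu})[\xi]=0$ for every signed measure $\xi$ on $\mathcal{S}^{K}$ with $\sum_{\mathbf{x}}\xi(\mathbf{x})=0$. (Alternatively this can be verified by direct differentiation of $J$, using the reversibility relation $\Gamma^{\infty}_{\mathbf{x},\mathbf{y}}\bar{\mu}(\mathbf{x})=\Gamma^{\infty}_{\mathbf{y},\mathbf{x}}\bar{\mu}(\mathbf{y})$ and $q^{\infty}(\mathbf{x})=\sum_{\mathbf{y}\ne\mathbf{x}}\Gamma^{\infty}_{\mathbf{x},\mathbf{y}}$.) Consequently $D\Phi(\bar{\mu})[\xi]=\lambda\sum_{\mathbf{x}}\rho(\mathbf{x})\xi(\mathbf{x})$.

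It remains to choose $\xi$ so that this directional derivative is negative. Here I invoke the standing hypothesis (as in Lemma~\ref{lemma:minPT}) that $\rho$ is not identically $1/K!$, equivalently that $\mu$ is not symmetric. If $\mathbf{x}_{\ast}$ attains $\min_{\mathbf{x}}\rho(\mathbf{x})$ then $K!\,\rho(\mathbf{x}_{\ast})\le\sum_{\sigma}\rho(\mathbf{x}_{\ast}^{\sigma})=1$, and equality would force $\rho\equiv 1/K!$; hence $\rho(\mathbf{x}_{\ast})<1/K!$, and in particular some permutation $\mathbf{y}_{\ast}=\mathbf{x}_{\ast}^{\sigma_{0}}$ satisfies $\rho(\mathbf{y}_{\ast})>\rho(\mathbf{x}_{\ast})$. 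Taking $\xi=\delta_{\mathbf{x}_{\ast}}-\delta_{\mathbf{y}_{\ast}}$ and $\nu_{\varepsilon}=\bar{\mu}+\varepsilon\xi$, the measure $\nu_{\varepsilon}$ lies in $\mathcal{P}(\mathcal{S}^{K})$ for all small $\varepsilon>0$, and $D\Phi(\bar{\mu})[\xi]=\lambda\big(\rho(\mathbf{x}_{\ast})-\rho(\mathbf{y}_{\ast})\big)<0$, so $\Phi(\nu_{\varepsilon})<\Phi(\bar{\mu})=\lambda/K!$ for all sufficiently small $\varepsilon>0$. Therefore $\gamma^{\ast}\le\Phi(\nu_{\varepsilon})<\lambda/K!$, which for $K=2$ is exactly $\gamma^{\ast}<\lambda/2$.

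I expect the only mildly delicate steps to be the bookkeeping identity $\sum_{\mathbf{x}}\rho(\mathbf{x})\bar{\mu}(\mathbf{x})=1/K!$ and the vanishing of the first variation of $J$ at $\bar{\mu}$; once these are granted the perturbation argument is routine, as are the remaining points (smoothness of $J$ near the interior point $\bar{\mu}$, that $\nu_{\varepsilon}$ is a probability measure, and the existence of $\mathbf{x}_{\ast},\mathbf{y}_{\ast}$).
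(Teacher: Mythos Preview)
Your argument is correct and takes a somewhat different route from the paper's. Both proofs first check that $\Phi(\bar{\mu})=\lambda/K!$ via the bookkeeping identity $\sum_{\mathbf{x}}\rho(\mathbf{x})\bar{\mu}(\mathbf{x})=1/K!$. The paper then uses the Bellman equation: it observes that if the solution $W^{\ast}$ were constant then \eqref{eq:HJeq2} would force $\rho(\mathbf{x})=\gamma^{\ast}$ for all $\mathbf{x}$, contradicting the nonconstancy of $\rho$; hence $W^{\ast}$ is nonconstant, so $\nu^{\ast}\neq\bar{\mu}$, and strict convexity of $J$ then gives $\gamma^{\ast}=\Phi(\nu^{\ast})<\Phi(\bar{\mu})$. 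Your proof instead bypasses the control-theoretic machinery entirely and argues by first variation: since $\bar{\mu}$ is an interior minimizer of the smooth convex $J$, its directional derivative vanishes, whereas the linear term $\nu\mapsto\lambda\langle\rho,\nu\rangle$ has a nonzero derivative whenever $\rho$ is nonconstant, yielding an explicit descent direction $\xi=\delta_{\mathbf{x}_{\ast}}-\delta_{\mathbf{y}_{\ast}}$. Your approach is more self-contained (it needs neither the Bellman equation nor the strict convexity Lemma~\ref{lem:strict_convex}), while the paper's fits naturally with the ergodic control framework already developed in Section~\ref{sec:controlProblem} and used in the surrounding arguments.
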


\begin{proof}
To simplify the notation we consider the case $\lambda=1$. From Lemma
\ref{lemma:equivOpt} we know that there is a minimizing measure $\nu^{\ast}$
in \eqref{eq:optEq} and that the optimal cost $\gamma^{\ast}$ satisfies
\[
\gamma^{\ast}=J(\nu^{\ast})+\sum_{\mathbf{x}\in\mathcal{S}^{2}}\rho
(\mathbf{x})\nu^{\ast}(\mathbf{x}).
\]
Moreover, if $W^{\ast}$ is defined as
\[
W^{\ast}(\mathbf{x})=-\log\left[  \frac{d\nu^{\ast}}{d\bar{\mu}}\right]
^{1/2}(\mathbf{x}),\ \ \mathbf{x}\in\mathcal{S}^{2},
\]
then $(\gamma^{\ast},W^{\ast})$ is a solution to the Bellman equation \eqref{eq:HJeq2}.

Suppose that $W^{\ast}$ is a constant. Inserting this into the Bellman
equation yields, for each $\mathbf{x}\in\mathcal{S}^{2}$,
\[
0=-\gamma^{\ast}+\rho(\mathbf{x}),
\]
which cannot hold since $\rho$ is not a constant. Thus, $W^{\ast}$ cannot be a
constant function. This in turn implies that the likelihood ratio $[d\nu
^{\ast}/d\bar{\mu}]$ is not constant equal to $1$ (the only possible constant
value). Thus $\nu^{\ast}$ is not $\bar{\mu}$, the invariant measure for the
original symmetrized dynamics.

Inserting $\bar{\mu}$ into the objective function in \eqref{eq:optEq} gives
\[
J(\bar{\mu})+\sum_{\mathbf{x}\in\mathcal{S}^{2}}\rho(\mathbf{x})\bar{\mu
}(\mathbf{x})=\sum_{\mathbf{x}\in\mathcal{S}^{2}}\rho(\mathbf{x})\bar{\mu
}(\mathbf{x})=\frac{1}{2},
\]
where the second equality comes from $\rho(\mathbf{x})+\rho(\mathbf{x}%
^{R})=1=1/2+1/2$ and the symmetry of $\bar{\mu}$. Thus the cost associated
with the uncontrolled dynamics is $1/2$. Since $\nu^{\ast}$ is the unique
minimizer in \eqref{eq:optEq} and $\nu^{\ast}\neq\bar{\mu}$, it holds that
\[
\gamma^{\ast}=J(\nu^{\ast})+\sum_{\mathbf{x}\in\mathcal{S}^{2}}\rho
(\mathbf{x})\nu^{\ast}(\mathbf{x})<J(\bar{\mu})+\sum_{\mathbf{x}\in
\mathcal{S}^{2}}\rho(\mathbf{x})\bar{\mu}(\mathbf{x})=\frac{1}{2}.
\]

The argument for $K>2$ temperatures is completely analogous.
\end{proof}

\begin{lemma}
\label{lem:strict_convex} Assume $S$ is a finite set and that $\Gamma_{x,y}$,
$x,y\in S$ is the intensity matrix of an ergodic Markov chain on $S$ with
invariant probability distribution $\bar{\mu}$. Let $q(x)=\sum_{y\in S}%
\Gamma_{x,y}$, and for $\nu\in\mathcal{P}(S)$ with $\theta(x)=\nu(x)/\bar{\mu
}(x)$ let
\[
J(\nu)=\sum_{x\in S}q(x)\theta(x)\bar{\mu}(x)-\sum_{x,y\in S}\theta
^{1/2}(x)\theta^{1/2}(y)\Gamma_{x,y}\bar{\mu}(x).
\]
Then $J(\nu)$ is strictly convex on the relative interior of $\mathcal{P}(S)$.
\end{lemma}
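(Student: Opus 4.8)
The plan is to split $J$ into an affine part plus a nonnegatively weighted sum of convex terms, obtain convexity essentially for free, and then pin down exactly when a line segment can be a set of equality.

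First, since $\theta(x)\bar\mu(x)=\nu(x)$, the term $\sum_{x\in S}q(x)\theta(x)\bar\mu(x)=\sum_{x\in S}q(x)\nu(x)$ is affine in $\nu$ and therefore irrelevant for convexity. For the remaining term, the map $\nu\mapsto\theta(x)=\nu(x)/\bar\mu(x)$ is affine, and $(\alpha,\beta)\mapsto-\sqrt{\alpha\beta}=\sup_{t>0}\bigl(-\tfrac{t}{2}\alpha-\tfrac{1}{2t}\beta\bigr)$ is convex on $[0,\infty)^2$ as a supremum of affine functions (the identity being AM--GM). Hence each $\nu\mapsto-\sqrt{\theta(x)\theta(y)}$ is convex, and since the weights $\Gamma_{x,y}\bar\mu(x)$ are nonnegative (here $\bar\mu(x)>0$ for all $x$, by ergodicity on a finite set), $J$ is convex on $\mathcal{P}(S)$. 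Strict convexity is the real content.

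For that I would fix distinct $\nu_0,\nu_1$ in the relative interior, set $\nu_s=(1-s)\nu_0+s\nu_1$ and $\theta_s(x)=\nu_s(x)/\bar\mu(x)>0$ (affine and strictly positive in $s$), and suppose for contradiction that $s\mapsto J(\nu_s)$ is affine. Because the non-affine part of $J$ is a nonnegative combination of the convex functions $s\mapsto-\sqrt{\theta_s(x)\theta_s(y)}$, affinity of this combination forces every summand with $\Gamma_{x,y}>0$ to be affine in $s$ (a nonnegative combination of convex functions that is affine on a segment has every positively weighted summand affine there, as one sees by comparing midpoints on each subinterval). An elementary computation then applies: on our segment $s\mapsto\sqrt{(a_0+s\Delta a)(b_0+s\Delta b)}$ is affine iff the quadratic under the root is a perfect square, i.e.\ iff $(a_0\Delta b-b_0\Delta a)^2=0$; taking $a_0=\theta_0(x)$, $b_0=\theta_0(y)$ (both positive) this gives $\theta_1(x)/\theta_0(x)=\theta_1(y)/\theta_0(y)$, equivalently $\nu_1(x)/\nu_0(x)=\nu_1(y)/\nu_0(y)$.

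To finish, set $\phi(x)=\nu_1(x)/\nu_0(x)>0$; the previous step shows $\phi(x)=\phi(y)$ whenever $\Gamma_{x,y}>0$. Since $\Gamma$ is the intensity matrix of an ergodic chain, the directed graph $\{(x,y):\Gamma_{x,y}>0\}$ is strongly connected, so $\phi$ is constant on $S$; as $\nu_0$ and $\nu_1$ are both probability measures this constant equals $1$, whence $\nu_0=\nu_1$, a contradiction. The only steps that need any care are the reduction showing that an affine nonnegative combination of convex terms forces each positively weighted term to be affine, and the perfect-square characterization of when $\sqrt{(a_0+s\Delta a)(b_0+s\Delta b)}$ is affine; everything else is bookkeeping.
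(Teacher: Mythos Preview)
Your proof is correct and takes a genuinely different route from the paper's. The paper computes the Hessian of each building block $f_{i,j}(\theta)=-\theta_i^{1/2}\theta_j^{1/2}$, identifies its single positive eigenvalue and the associated eigenvector $\theta_j e_i-\theta_i e_j$, and then argues via ergodicity that the span of these eigenvectors over all $(i,j)$ with $\Gamma_{x_i,x_j}>0$ is the full orthogonal complement of the radial direction $(\theta_1,\ldots,\theta_K)$; since that direction is transverse to the simplex $\{\sum_k\theta_k\bar\mu_k=1\}$, strict convexity follows. You instead avoid calculus entirely: you reduce to ``affine on a segment,'' peel off each positively weighted convex summand via midpoint equality, and characterize exactly when $s\mapsto\sqrt{(a_0+s\Delta a)(b_0+s\Delta b)}$ is affine by the discriminant condition $(a_0\Delta b-b_0\Delta a)^2=0$. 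Both arguments arrive at the same structural insight---the only flat direction for $-\sqrt{\theta(x)\theta(y)}$ is proportional rescaling of $\theta$, and irreducibility propagates this across $S$---but your version is more elementary and arguably cleaner, while the paper's Hessian computation makes the degenerate direction explicit as an eigenvector and fits the standard template for verifying strict convexity.
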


\begin{proof}
It is enough to show the strict convexity of%
\[
\theta(\cdot)\rightarrow-\sum_{x,y\in S}\theta^{1/2}(x)\theta^{1/2}%
(y)\Gamma_{x,y}\bar{\mu}(x)
\]
for $\theta(x)\geq0$, $\sum_{x\in S}\theta(x)\bar{\mu}(x)=1$. Let
$\{x_{1},x_{2},\ldots,x_{K}\}$ be an enumeration of the distinct elements of
$S$, $\theta_{i}=\theta(x_{i}),\bar{\mu}_{i}=\bar{\mu}(x_{i})$ and
$f_{i,j}(\theta)=-\theta_{i}^{1/2}\theta_{j}^{1/2}$. If $M_{i,j}(\theta)$
denotes the matrix of second order partial derivatives of $f_{i,j}(\theta)$ at
$\theta$, then straightforward calculation shows that the eigenvalue $0$ is
repeated $K-1$ times, and $(\theta_{i}/\theta_{j}+\theta_{j}/\theta_{i})$ is
also an eigenvalue with eigenvector $\theta_{j}e_{i}-\theta_{i}e_{j}$. Hence
the null space of this matrix is the collection of vectors orthogonal to
$\theta_{j}e_{i}-\theta_{i}e_{j}$. Since $(\theta_{i}/\theta_{j}+\theta
_{j}/\theta_{i})>0$, $f_{i,j}(\theta)$ is strictly convex (as a function in
$\mathbb{R}^{K}$) at $\theta$ except in those directions orthogonal to
$\theta_{j}e_{i}-\theta_{i}e_{j}$.

Since $\Gamma_{x,y}$ is ergodic all states communicate, and so there exists a
sequence $1=i_{1},i_{2},\ldots,i_{K},i_{K+1}=1$ such that $\Gamma_{x_{i_{k}%
},x_{i_{k+1}}}>0$ for $k=1,\ldots,K$. Thus $-\sum_{x,y\in S}\theta
^{1/2}(x)\theta^{1/2}(y)\Gamma_{x,y}\bar{\mu}(x)$ is strictly convex except in
those directions that are orthogonal to each of $\theta_{i_{k+1}}e_{i_{k}%
}-\theta_{i_{k}}e_{i_{k+1}}$, which is exactly the set of directions spanned
by $(\theta_{1},\theta_{2},\ldots,\theta_{K})$. Since this direction cannot be
parallel to $\{\theta:\sum_{k=1}^{K}\theta_{k}\bar{\mu}_{k}=1\}$, $J(\nu)$ is
strictly convex on this set.
\end{proof}

\begin{remark}
\label{rmk:multtemp}The proofs in Section \ref{sec:PT} were largely confined
to the setting of two temperatures $\tau_{1},\tau_{2}$. This was to keep the
notation simple and the results generalize to any number $K\geq2$ of
temperatures. The only result which appears to substantially use that two
temperatures are considered is Lemma \ref{lemma:minPT}, and specifically the
argument by contradiction. Here we outline how the proof would proceed for the
general setting.

In the setting of $K$ temperatures the assumption \eqref{eq:condW} becomes
\begin{equation}
\sum_{\sigma\in\Sigma}e^{-2W(\mathbf{x}^{\sigma})}-K!=0,\ \forall\mathbf{x}%
\in\mathcal{S}^{K}. \label{eq:condW2}%
\end{equation}
We still have that $W(\mathbf{x})=0$ for $\mathbf{x}\in\mathcal{D}$.

Let $\mathcal{D}$ denote the set of diagonal states: $\mathcal{D}%
=\{\mathbf{x}:\mathbf{x}=\mathbf{x}^{\sigma}, \ \forall\sigma\in\Sigma_{K}\}$.
The only such states are those for which all components are equal. The cost
structure is such that $h(\mathbf{x})=1/K!$ for $\mathbf{x}\in\mathcal{D}$.

Consider the states that communicate directly with $\mathcal{D}$, i.e., those
only one step away from a diagonal state. Since the underlying processes only
jump one at a time there can only be a difference in one component, the others
remaining fixed. There are a total of $K!$ possible permutations in
$\Sigma_{K}$, $(K-1)!$ of which keep a specific component fixed. Thus, for a
state that is one step removed from the diagonal there are $(K-1)!$
permutations that result in the same state. Moreover, the diagonal state in
question will communicate directly with the remaining $K$ permutations as well.

The states one step away from a specific diagonal point can be viewed as
forming disjoint sets of states according to the previous description. For
each state $\mathbf{y}$ one step removed from an $\mathbf{x}$, there are $K$
distinct states $\mathbf{y}_{1},\dots,\mathbf{y}_{K}$ that are permutations of
$\mathbf{y}$ and communicate directly with $\mathbf{x}$. For each such
collection of states we can pick one to represent the collection (does not
matter which one we pick). Let $\mathcal{A}_{x}$ denote the collection of such
representative states $\mathbf{y}$. In the case of two temperatures this can
be phrased as only looking at states above the diagonal.

The Bellman equation for a diagonal state $\mathbf{x}$ takes the form
\[
0=\sum_{\mathbf{y}\in\mathcal{A}_{x}}\sum_{\sigma:\mathbf{y}^{\sigma}%
\neq\mathbf{y}}r(\mathbf{x},\mathbf{y})\left[  1-e^{-W(\mathbf{y}^{\sigma
})+W(\mathbf{x})}\right]  -\gamma+\frac{1}{K!}.
\]
The rates $r(\mathbf{x},\mathbf{y}^{\sigma})$ are all equal due to symmetry.
Combined with $W(\mathbf{x})=0$ for $\mathbf{x}\in\mathcal{D}$, this allows
the Bellman equation to be expressed as
\[
0=\sum_{\mathbf{y}\in\mathcal{A}_{x}}r(\mathbf{x},\mathbf{y})\left[
K-\sum_{\sigma:\mathbf{y}^{\sigma}\neq\mathbf{y}}e^{-W(\mathbf{y}^{\sigma}%
)}\right]  -\gamma+\frac{1}{K!}.
\]
Since $\gamma<(1/K!)$ and the rates are all non-negative it must be the case
that for at least one $\mathbf{y}\in\mathcal{A}_{x}$
\[
\sum_{\sigma:\mathbf{y}^{\sigma}\neq\mathbf{y}}e^{-W(\mathbf{y}^{\sigma}%
)}-K>0.
\]

For states one step from the diagonal, since $(K-1)!$ permutations will result
in the same state, the condition \eqref{eq:condW2} takes the form
\[
(K-1)!\sum_{\sigma:\mathbf{x}^{\sigma}\neq\mathbf{x}}e^{-W(\mathbf{x}^{\sigma
})}-K!=0\Leftrightarrow\sum_{\sigma:\mathbf{x}^{\sigma}\neq\mathbf{x}%
}e^{-W(\mathbf{x}^{\sigma})}-K=0.
\]
That is we need only be concerned with the permutations that switch the
location of the component that differs from the diagonal state (and the
$\sigma$ that corresponds to the identify map in $\Sigma_{K}$). There will
then be $(K-1)!$ permutations that produces the exact same state, yielding the
factor $(K-1)!$ in front of the sum.

For the reduced form of \eqref{eq:condW2} and the Bellman equation to hold, we
must have that
\[
\sum_{\sigma:\mathbf{x}^{\sigma}\neq\mathbf{x}}e^{-W(\mathbf{x}^{\sigma}%
)}-K=0,
\]
for all $\mathbf{y}$ that communicate with $\mathbf{x}$, and for at least one
such $\mathbf{y}$,
\[
\sum_{\sigma:\mathbf{y}^{\sigma}\neq\mathbf{y}}e^{-W(\mathbf{y}^{\sigma}%
)}-K>0.
\]
This is precisely the setting of Lemma \ref{lemma:ineqCont} with the $a_{i}$s
represented by $e^{-2W(\mathbf{y}^{\sigma})}$ for the $K$ relevant
permutations $\sigma$. The lemma then implies that \eqref{eq:condW2} is
inconsistent with the Bellman equation and therefore cannot hold. This
contradicts that $(M\bar{\nu})=\mu$.
\end{remark}

\bibliographystyle{plain}
\bibliography{main}

\end{document}